\documentclass{article}
\usepackage[utf8]{inputenc}
\usepackage[margin=1in]{geometry}          
\usepackage{graphicx}
\usepackage{blkarray}
\usepackage{tikz}
\usepackage[round]{natbib}
\usepackage{float}
\usepackage{enumitem}
\usepackage{amsmath, amsthm, amssymb}
\allowdisplaybreaks
\usepackage{setspace}
\usepackage{mathtools}
\usepackage[colorlinks=true,linkcolor=blue,citecolor=blue,urlcolor=blue]{hyperref}
\usepackage{cleveref}
\usepackage{bm}

\DeclarePairedDelimiter\norm{\lVert}{\rVert}

\title{Treatment effect estimation under convergent network interference}
\author{
Bryan Park\\
\texttt{bryan314@stanford.edu}
\and
Stefan Wager\\
\texttt{swager@stanford.edu}
}
\date{Stanford University \\ \today}

\begin{document}

\maketitle

\newtheorem{theorem}{Theorem}
\newtheorem{problem}{Problem}
\newtheorem{lemma}{Lemma}
\newtheorem{definition}{Definition}
\newtheorem{example}{Example}
\newtheorem{observation}{Observation}
\newtheorem*{answer}{Answer}
\newtheorem{corollary}[lemma]{Corollary}
\newtheorem{proposition}[lemma]{Proposition}
\newtheorem{condition}{Condition}
\newtheorem*{remark}{Remark}

\newcommand{\Var}{\operatorname{Var}}
\newcommand{\Cov}{\operatorname{Cov}}
\newcommand{\HT}{\operatorname{HT}}
\newcommand{\Haj}{\operatorname{HAJ}}
\newcommand{\C}{\mathcal{C}}
\newcommand{\E}{\mathbb{E}}
\newcommand{\N}{\mathbb{N}}
\newcommand{\G}{\mathcal{G}}
\newcommand{\F}{\mathcal{F}}
\newcommand{\Z}{\mathbb{Z}}
\newcommand{\Q}{\mathbb{Q}}
\newcommand{\FX}{\mathcal{F}^\mathbf{X}}
\renewcommand{\L}{\mathcal{L}}
\renewcommand{\S}{\mathcal{S}}
\newcommand{\T}{\mathsf{T}}
\renewcommand{\P}{\mathbb{P}}
\newcommand{\B}{\mathcal{B}}
\newcommand{\A}{\mathcal{A}}
\newcommand{\V}{\mathcal{V}}
\newcommand{\D}{\mathcal{D}}
\newcommand{\R}{\mathbb{R}}
\newcommand{\lims}[1][]{\xrightarrow[#1]{\rho_n}}
\newcommand{\limd}[1][]{\xrightarrow[#1]{d}}
\newcommand{\limp}[1][]{\xrightarrow[#1]{p}}
\newcommand{\limas}[1][]{\xrightarrow[#1]{\mathrm{a.s.}}}
\makeatletter
\newcommand{\Mod}[1]{\ (\mathrm{mod}\ #1)}
\newcommand{\ed}{\,{\buildrel d \over =}\,}

\begin{abstract}
Under network interference, a unit's observed outcome depends on the treatment assignment of its neighboring units in an exposure graph. Existing design-based asymptotic theory typically considers local interference by restricting neighborhood sizes in the exposure graph. Such methods do not apply to dense exposure graphs, so prior work has often adopted a superpopulation approach instead, imposing regularity through random-graph models. In this paper, we introduce a notion of convergence for a sequence of finite populations under anonymous interference. Building on the graph limit framework of Lov\'{a}sz and Szegedy, we show that large-scale geometry of the exposure graph can provide a source of regularity beyond sparsity assumptions or random-graph modeling. Under Bernoulli assignment, our convergence notion yields asymptotic normality of standard estimators for the average direct effect, even on dense, non-random exposure graphs. As a special case, graphon-based random-graph models studied in prior work generate finite populations that converge in our sense. Under these models, graph randomness generates exposure graphs with stable large-scale geometry, while first-order uncertainty in average direct effect estimation is driven by treatment assignment.
\end{abstract}

\section{Introduction}\label{intro_sec}

Design-based causal inference studies a fixed, finite population of units and potential outcomes, using treatment assignment as the sole source of randomness \citep{neyman1923applications}. Under the stable unit treatment value assumption (SUTVA) \citep{rubin1980sutva}, each unit’s observed outcome depends only on its own treatment. Under interference, however, a unit's observed outcome depends on its exposure, which may be the full treatment vector or a lower-dimensional summary \citep{Hudgens,Manski,Aronow}. Thus, randomization induces a joint distribution over the units' exposures, and the realized exposures then determine the observed outcomes. From this perspective, the relevant design-side regularity for asymptotic theory concerns the joint behavior of exposures as the number of units grows.

Under network (or neighborhood) interference, a unit's exposure depends on its own treatment along with the treatment assignment of its neighboring units in an exposure graph. Existing design-based theory typically restricts neighborhood sizes in the exposure graph, ensuring that dependence among exposures is local \citep{Savje,leung2022ani,gao2025network}. This approach, however, does not apply to dense exposure graphs, where a unit’s neighborhood overlaps with many other units’ neighborhoods. There has been some recent work studying treatment effect estimation under dense network interference via random-graph models  \citep{Wager,shirani2024cmp,bhattacharya2025meanfield}. While these results allow complex models of interference, their stochastic generative assumptions also move us away from the design-based paradigm for causal inference.


In this paper, we study design-based asymptotics for network interference on possibly dense exposure graphs, without relying on random-graph assumptions to justify our analysis. We focus on anonymous interference, where each unit’s exposure consists of its own treatment and the fraction of its treated neighbors \citep{Manski}. In this setting, the exposure graph directly determines the joint distribution of exposures induced by the randomization. Our key idea is that large-scale geometry of the exposure graph can provide a source of regularity beyond sparsity assumptions or random-graph modeling. Formally, we introduce a notion of convergence for a sequence of finite populations under anonymous interference. Our notion extends the graph limit framework of \cite{Lovasz}, regulating not only the exposure graph but also how each unit's outcome depends on its possible exposures.

Under the Bernoulli design, our notion of convergence yields $\sqrt{n}$-asymptotic normality of both the Horvitz--Thompson and H\'{a}jek estimators for the average direct effect, even on dense, non-random exposure graphs. As a special case, we show that the random-graph model of \cite{Wager} almost surely generates finite populations that converge in our sense. Under their model, graph randomness generates exposure graphs with stable large-scale geometry, while first-order uncertainty in average direct effect estimation is driven by treatment assignment.

\subsection{Related Work}

Our notion of convergence is motivated by \citet{Lovasz} who introduced the concept of graph limits for sequences of dense
graphs. They defined graph limits by requiring subgraph densities to converge. In particular, they showed that a symmetric, measurable
function $A : [0,1]^2 \to [0,1]$, now widely known as a graphon, captures the limiting subgraph densities. Next, \citet{graphon} established the
equivalence between convergence in subgraph densities and convergence in the cut metric. The cut metric
provides a formal distance measure between graphs that captures their large-scale structural similarities.
While the original theory was developed for dense graph sequences, subsequent work by \citet{key} and others extended this framework to sparse graph sequences via an $L^p$ theory of graph limits.

Recently, graphons have gained significant traction across a number of fields, ranging from economics and operations research to statistics. For instance, \citet{Parise} and \citet{Erol}
used graphons to study optimal interventions in large-scale network games and contagion processes. The aforementioned random-graph models \citep{Wager, shirani2024cmp, bhattacharya2025meanfield} include graphon models for interference structures. Crucially, however, these applications treat graphons as generative models, viewing observed finite networks
as random draws from an underlying graphon. To the best of our knowledge, our paper is the first to introduce graph limit theory into the causal inference literature as a tool for formulating deterministic regularity conditions that enable asymptotic theory under interference.

Broadly speaking, our paper fits into the literature on causal inference under interference, where a unit's observed outcome may depend on treatments assigned to other units \citep{Halloran}. We work under the exposure-mapping framework by assuming anonymous interference \citep{Hudgens,Manski,Aronow}. However, our causal estimand does not depend on any exposure mapping. We are interested in the average direct effect, defined as the average contrast between treated and untreated potential outcomes holding others’ treatments fixed \citep{Savje,Hu}.

Methodologically, our paper builds on the network interference literature which encompasses both design-based and superpopulation approaches. From a design-based perspective, \cite{Savje} built a general theory for sparse interference, even without imposing exposure mappings. In network settings, their theory implies that $\sqrt{n}$-consistency can be obtained if the maximum degree of the exposure graph is bounded. Central limit theorems were further proved by \citet{leung2022ani} and \citet{gao2025network} under approximate neighborhood interference, again with some restrictions on neighborhood sizes. In another line of work, \cite{Savje2} introduced a quantitative measure of interference that does not require sparsity. His result implies that under anonymous interference, $\sqrt{n}$-consistency can hold for dense exposure graphs if pairwise interference remains weak. More recently, \cite{Lu} developed a design-based theory under uniformly bounded neighborhood interference, providing central limit theorems for dense exposure graphs as well. Their framework is complementary to ours, as they impose outcome-side smoothness and influence restrictions rather than regulating the large-scale geometry of the exposure graph.

From a superpopulation perspective, \cite{Wager} modeled the exposure graph as a random draw from a graphon.  Under anonymous interference, they showed that the Horvitz--Thompson estimator and H\'{a}jek estimator are $\sqrt{n}$-asymptotically normal for the average direct effect, regardless of graph density. A key difference from the design-based approach is that such asymptotic normality relies on randomness from both treatment assignment and graph generation. In this paper, we provide a design-based interpretation of Li and Wager's random-graph asymptotics, based on the fact that their random-graph model generates convergent finite populations. In general, a growing literature has developed random-graph approaches to network interference. For instance, \citet{shirani2024cmp} and \citet{bhattacharya2025meanfield} use approximate message passing or mean-field structure to estimate effects under rich propagation and long-range dependence. Other complementary directions include \cite{Gao} on endogenous network formation, \cite{Fan} on regression adjustment, and \cite{Cattaneo} on robust inference under both outcome and treatment-assignment interference.

\section{Model and Preliminaries}
In Section \ref{problem_setting}, we formally describe our problem setup. In Sections \ref{SUTVA_sec} and \ref{Graph_sec}, we provide preliminary results for our notion of convergence for finite populations. Section \ref{SUTVA_sec} motivates a regularity condition for potential outcome functions by revisiting SUTVA. Section \ref{Graph_sec} introduces the graph limit framework of \cite{Lovasz} that describes large-scale geometry of graphs.

\subsection{Problem Setting}\label{problem_setting}

We study Bernoulli treatment assignment under anonymous interference. In order to perform asymptotic analysis, we embed the given population in an infinite sequence of growing finite populations \citep{Lin,lumley2010asymp}. For each population size $n$, let $[n]=\{1,2,\dots,n\}$ represent units. For each $i\in[n]$, we posit potential outcomes $\{Y_i(w):w\in\{0,1\}^n\}$ so that 
\begin{align}
    \{Y_i(w): i\in [n], w\in\{0,1\}^n\} \label{eq:general_fp}
\end{align}
describes the $n$th finite population. Let $W=(W_1,\dots,W_n)$ denote the random treatment vector, where $W_i \overset{\mathrm{i.i.d.}}{\sim} \mathrm{Ber}(\pi)$ and $\pi\in(0,1).$ For each unit $i\in[n]$, we let
$$
Y_i=Y_i(W)=Y_i(W_1,\dots,W_n)
$$
denote the observed outcome of unit $i$ under treatment assignment $W$.

Under anonymous interference, we assume that $Y_i(W_1,\dots,W_n)$ depends on $W$ only through unit $i$'s own treatment status and the fraction of its treated neighbors in an exposure graph. This allows a structural description of \eqref{eq:general_fp}. Formally, let $G_n$ denote a simple undirected exposure graph and write
$$
R_i(G_n)=\frac{\sum_{j\sim i} W_j}{d_i(G_n)},
\qquad
d_i(G_n)=\deg_{G_n}(i)\vee 1,
$$
where $j\sim i$ means that $i,j$ are neighbors in the graph $G_n$. We assume that
\[
Y_i(W_1,\dots,W_n)=f_i\bigl(W_i,R_i(G_n)\bigr)
\]
for some function $f_i\in\mathcal{F}$, where
\begin{align}
\mathcal{F}
=
\left\{
f:\{0,1\}\times[0,1]\to\mathbb{R}
\ \text{such that}\
|f^{(k)}(w,x)|\le C
\ \text{for } k\in\{0,1,2,3\}
\right\},\label{eq:tech_potential}
\end{align}
$C$ is a constant, and the derivatives are taken with respect to $x$. For each unit $i\in [n]$, we refer to $f_i$ as its potential outcome function and $(W_i,R_i(G_n))$ as its exposure.

We thus summarize the $n$th finite population \eqref{eq:general_fp} by the pair $(G_n,v_n)$, where $G_n$ also denotes the adjacency matrix of the exposure graph and $v_n:[n]\to\mathcal{F}$ is the map $i\mapsto f_i$ describing the vector $(f_1,\dots, f_n)$. The design-based perspective views $(G_n,v_n)$ as deterministic, whereas the superpopulation perspective views $(G_n,v_n)$ as random. Following \citet{Savje} and \citet{Hu}, we define the average direct effect (ADE) as
$$ \overline\tau_n(G_n,v_n) = \frac{1}{n}\sum_{i=1}^n \E_\pi\left[f_i(1,R_i(G_n))-f_i(0,R_i(G_n))\mid G_n, v_n\right].$$
We consider the Horvitz--Thompson (HT) estimator
$$  \hat\tau_n^{\HT}(G_n, v_n) = \frac{1}{n}\sum_{i=1}^n\left(\frac{W_i}{\pi}-\frac{1-W_i}{1-\pi}\right)Y_i$$
along with the H\'{a}jek estimator
$$  \hat\tau_n^{\Haj}(G_n, v_n) = \frac{\sum_{i=1}^n W_i Y_i}{\sum_{i=1}^n W_i} -  \frac{\sum_{i=1}^n (1-W_i) Y_i}{\sum_{i=1}^n (1 - W_i)}.$$
When the estimator and finite population are clear, we write $\hat\tau_n,\overline\tau_n$ for simplicity. As we adopt a design-based perspective, the fixed population sequence $\{(G_n,v_n)\}_{n=1}^\infty$ is the central object of our asymptotic framework. The next sections build towards a notion of convergence that captures structural regularity of such sequences.

\subsection{Finite Population Convergence under SUTVA}\label{SUTVA_sec}
We first recall the classical SUTVA setting, where the $n$th finite population is simply
$$\{(Y_i(0),Y_i(1)):i\in [n]\}.$$ Writing
$$a_n = (Y_i(0))_{i=1}^n, \qquad b_n = (Y_i(1))_{i=1}^n,$$ we identify the finite population with the pair $(a_n,b_n)$. In design-based
asymptotics, a fixed finite population is embedded in an imagined sequence of
growing finite populations $\{(a_n,b_n)\}_{n=1}^\infty$ \citep{Lin, lumley2010asymp}. The purpose of regularity conditions imposed on the sequence is not merely to prove a central limit theorem. Rather, these
conditions require the growing populations to be sufficiently similar to the given population, ensuring that the limiting approximation is informative.

A standard way to formalize this requirement is through the empirical
distribution of potential outcomes. Let $F_n$ denote the empirical distribution
of $\{(Y_i(0),Y_i(1))\}_{i=1}^n$, placing mass $1/n$ at each pair. Classical
finite-population asymptotics often assumes that
$$F_n\Rightarrow F$$
for some limiting distribution $F$ \citep[Example 12.2.2]{lehmann2022TSH}. This condition says that the joint empirical distribution of potential outcomes stabilizes along the sequence. Thus, the sequence
does not consist of arbitrary growing populations, and instead preserves relevant features for the treatment-effect estimator.

For our purposes, it is useful to express the same idea geometrically. With slight abuse of notation, we identify the vectors $a_n = (Y_i(0))_{i=1}^n$ and $b_n=(Y_i(1))_{i=1}^n$ as step functions on $[0,1]$ by letting 
$$a_n(t) = a_n(i) = Y_i(0), \qquad b_n(t) = b_n(i) = Y_i(1)$$ for $(i-1)/n < t < i/n.$ The boundary points can be assigned values arbitrarily. Next, for any permutation $\varphi_n\in S_n,$ let $a_n^\varphi$ denote the vector with $i$th entry $a_n^\varphi(i) = a_n(\varphi_n(i))$ where we suppress the dependency on $n$ in the subscript of $\varphi_n.$ Defining $b_n^\varphi$ similarly, we obtain the following notion of convergence that does not depend on the labeling of units.

\begin{definition}[Convergence of $(a_n,b_n)$]\label{fin_convg}
\normalfont
    Let $\{(a_n,b_n)\}_{n=1}^\infty$ be a sequence of finite populations under SUTVA and let $\ell_0,\ell_1$ be integrable functions on $[0,1].$ Let $\norm{\cdot}_1$ denote the $L^1$ norm for integrable functions on $[0,1]$. We say that 
    $$(a_n,b_n)\to (\ell_0,\ell_1)$$
    as $n\to\infty$ if there is a sequence of permutations $\varphi_n\in S_n$ so that $\norm{a_n^\varphi-\ell_0}_1\to 0$ and $\norm{b_n^\varphi-\ell_1}_1\to 0.$
\end{definition}

\begin{figure}[t]
    \centering
    \includegraphics[width=0.75\textwidth]{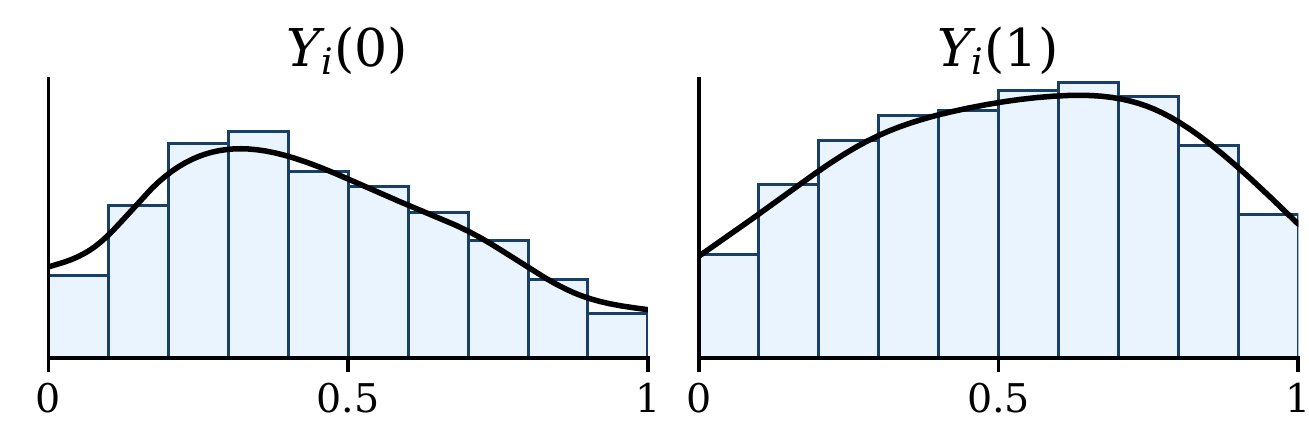}
    \caption{Finite population convergence under SUTVA. The step functions correspond to $n=10$, while the black curves describe the limiting shapes $\ell_0$ and $\ell_1$. Under the same labeling of units, we assume that each step function converges to the respective limiting shape as $n\to\infty$.}
    \label{fig:intuition}
\end{figure}

The key is that $a_n$ and $b_n$ converge under the same labelings. Figure \ref{fig:intuition} provides an illustration of Definition \ref{fin_convg}. While the convergence of $F_n$ captures the limiting distribution of potential outcome pairs, the convergence of $(a_n,b_n)$ encodes a limiting-shape view of finite-population asymptotics. A short calculation in the \hyperref[app]{Appendix} shows that convergence of $(a_n,b_n)$ implies convergence of $F_n$. More importantly, both perspectives express the same underlying principle: An asymptotic sequence is useful to the extent that its large populations resemble the finite population of study. We adopt this perspective when developing asymptotic regularity conditions under anonymous interference.

\subsection{Graph Limits and Large-Scale Geometry}\label{Graph_sec}

The previous section introduced a geometric notion of finite population convergence under SUTVA. Under network interference, this is no longer enough, as the exposure graph is also part of the finite-population geometry. In this section, we introduce graph limits to describe large-scale geometry of exposure graphs. Similar to how we embedded $a_n,b_n$ as step functions on $[0,1],$ we begin by representing finite graphs as step functions on $[0,1]^2$. 

A graphon is any symmetric, measurable function $A: [0,1]^2\to [0,1].$ Let $A_n$ denote the adjacency matrix of any simple undirected graph on $n$ vertices. We view $A_n$ as a graphon in the following way. Partition $[0,1]^2$ into an $n\times n$ grid of equal squares. For all points in the interior of the square $[(i-1)/n,i/n]\times [(j-1)/n,j/n],$ we let the graphon take on value $A_n(i,j)$. The remaining points on the grid lines form a null set of $[0,1]^2$, hence any values (respecting symmetry) may be assigned.  Next, for any adjacency matrix $A_n$ and permutation $\varphi_n\in S_n,$ let $A_n^\varphi$ denote the adjacency matrix
$A_n^\varphi(i,j)=A_n(\varphi_n(i),\varphi_n(j)).$ Allowing for such permutations enables a notion of graph convergence that is independent of vertex-labeling. 

While the original theory of \cite{Lovasz} focuses on graph limits of dense graphs, the framework we adopt allows sparse graph sequences through an appropriate rescaling. For this purpose, the correct limit objects turn out to be kernels, which are symmetric, integrable, nonnegative functions on $[0,1]^2$. In particular, any graphon is a kernel. Next, while $L^1$ convergence is natural for the potential outcome functions in Definition \ref{fin_convg}, it is too strong for adjacency matrices as it controls microscopic edge-by-edge discrepancies. Instead, we use the cut norm \citep{graphon} for integrable functions on $[0,1]^2,$ which is a key notion that captures large-scale edge structure.

\begin{definition}[Cut norm]\label{Cut norm}
\normalfont
    Given any integrable $A: [0,1]^2\to \R$, we define the cut norm as
    $$\norm{A}_\square = \sup_{S,T\subseteq [0,1]}\left|\int_{S\times T}A(u,v)\,d\lambda(u,v)\right|$$
    where $S,T$ are measurable and $\lambda$ is the Lebesgue measure on $[0,1]^2$.
\end{definition}

As an example, consider any two graphs $A_n,B_n$ on $n$ vertices. We see that
\begin{align*}
    \left|\frac{1}{n^2}\sum_{i\in S}\sum_{j\in T}\big(A_n(i,j)-B_n(i,j)\big)\right|\leq \norm{A_n-B_n}_\square
\end{align*}
for any $S,T\subseteq [n].$ In other words, $\norm{A_n-B_n}_\square$ uniformly controls discrepancies in the total edge mass between any two large vertex subsets. Indeed, only subsets of linear size matter due to the $n^{-2}$ normalization. Equipped with the cut norm, we use the following definition of graph convergence \citep{graphon, key}, which allows both vertex relabeling and graph rescaling.
\begin{definition}[Graph convergence at scale $\rho_n$]\label{Conv}
\normalfont
    Let $\{A_n\}_{n=1}^\infty$ be a sequence of simple undirected graphs and $\{\rho_n\}_{n=1}^\infty$ be a sequence of scale factors in $(0,1]$. Let $A$ be a kernel. We say that $$A_n\lims A$$ as $n\to\infty$ if there is a sequence of permutations $\varphi_n\in S_n$ so that $\norm{\rho_n^{-1}A_n^{\varphi}-A}_\square\to 0.$
\end{definition}

If $A_n\lims A,$ it follows from Definition \ref{Cut norm} that $\norm{A_n}_1/\rho_n\to \norm{A}_1$ where $\norm{\cdot}_1$ is the $L^1$ norm for integrable functions on $[0,1]^2.$ In particular, if $\norm{A}_1>0$, then $\rho_n$ must be of the same order as the edge densities $\norm{A_n}_1$. For this reason, instead of considering arbitrary scale factors, we consider $\rho_n=1$ (dense graphs) or $\rho_n\to 0$ and $n\rho_n\to\infty$ (sparse graphs). In the dense case $\rho_n=1,$ we assume that the limit $A$ is a graphon since entries of $A_n$ are in $\{0,1\}$. Moreover, note that our sparse case still requires growing average degrees.

\section{Finite Population Convergence under Interference}\label{Anonymous_sec}

Under anonymous interference, the sequence of populations is $\{(G_n,v_n)\}_{n=1}^\infty$ where $G_n$ is the exposure graph and $v_n:[n]\to \F$ describes the vector of potential outcome functions. Building on Sections \ref{SUTVA_sec} and \ref{Graph_sec}, we introduce a notion of convergence for $\{(G_n,v_n)\}_{n=1}^\infty$ that regulates the finite-population geometry and ensures that the asymptotic approximation is meaningful for the finite population of interest. The key idea is to require that the exposure graphs and potential outcome functions converge jointly, after a common relabeling of the units and an appropriate graph rescaling. The limiting object is $(\L,\ell),$ where $\L$ is a kernel and $\ell:[0,1]\to \F$ is measurable, meaning $(t,w,x)\mapsto \ell(t)(w,x)$ is measurable.

The convergence of exposure graphs is captured by Definition \ref{Conv}. To define convergence of potential outcome functions, we extend Section \ref{SUTVA_sec}. For each $(w,x)\in\{0,1\}\times [0,1]$, let $v_n(\cdot,w,x)$ denote the function on $[0,1]$ obtained by letting $v_n(t,w,x) = v_n(i)(w,x)= f_i(w,x)$ for $(i-1)/n < t < i/n.$ Indeed, the boundary points are assigned arbitrary values. Following Definition \ref{fin_convg}, we require each $v_n(\cdot,w,x)$ to converge to a limiting shape. Formally, the following is our notion of convergence for $\{(G_n,v_n)\}_{n=1}^\infty.$

\begin{definition}[Convergence of $(G_n,v_n)$ at scale $\rho_n$]\label{Mine}
\normalfont
     Let $\{(G_n, v_n)\}_{n=1}^\infty$ be a sequence of finite populations under anonymous interference. Let $\L$ be a kernel and $\ell: [0,1]\to \F$ be measurable. Finally, let $\{\rho_n\}_{n=1}^\infty$ be a sequence of scale factors in $(0,1]$. We say that $$(G_n,v_n)\lims (\L,\ell)$$ as $n\to\infty$ if there is a sequence of permutations $\varphi_n\in S_n$ so that 
    \begin{enumerate}[label=(\alph*)]
        \item $\norm{\rho_n^{-1}G_n^\varphi - \L}_\square\to 0,$
        \item $\norm{v_n^\varphi(\cdot, w, x) - \ell(\cdot, w, x)}_1\to 0$
    for each $(w,x)\in \{0,1\}\times [0,1],$ 
    \end{enumerate}
    where $v_n^\varphi(i) = v_n(\varphi_n(i))$ and $\ell(t,w,x)=\ell(t)(w,x).$ 
\end{definition}
Under SUTVA, we know that $G_n$ is the empty graph and $v_n(t ,w,x)$ does not depend on $x.$ Thus, Definition \ref{Mine} is a direct generalization of Definition \ref{fin_convg}, and a crucial feature is that the same sequence of permutations is used to align both the exposure graphs and the potential outcome functions. 

\subsection{Examples of Finite Population Convergence}\label{interpret_conv}

Before proceeding further, we provide some examples of Definition \ref{Mine}. We begin with mathematical examples, viewing $(G_n,v_n)$ as abstract objects. We then provide a latent-type interpretation in the context of causal inference. All proofs are given in the \hyperref[app]{Appendix}.

\subsubsection{Randomly Generated Examples}\label{stochastic_example}
As mentioned in Section \ref{intro_sec}, the random-graph model of \cite{Wager} is one example that produces such convergent sequences of populations. The design-based framework incorporates random graphs by conditioning on their realizations and only considering randomness from the treatment assignment. Let $\L$ be any kernel and $\ell:[0,1]\to \F$ be measurable. Let $U_i\overset{\text{i.i.d.}}{\sim}U[0,1]$ and let $\{\rho_n\}_{n=1}^\infty$ denote a sequence of scale factors. Following Li and Wager, we can define a random exposure graph $L_n$ and a random vector of potential outcome functions $\ell_n$ by 
\begin{align}
    L_n= G(n,\L,\rho_n),\qquad \ell_n=\left(\ell(U_1),\dots, \ell(U_n)\right). \label{random_pop}
\end{align} 
Concretely, $G(n,\L,\rho_n)$ is the random graph drawn from $\L$ at density $\rho_n$, generated as follows. Given $\{U_i\}_{i=1}^n$, for each unordered pair of vertices $\{i,j\}$, we add an edge independently with probability $\min\{1, \rho_n\cdot \L(U_i,U_j)\}.$

The random population $(L_n,\ell_n)$ can be viewed as a discrete approximation of $(\L,\ell)$ along random evaluation points $U_1,\dots, U_n.$ The following result shows that $(L_n,\ell_n)\lims (\L,\ell)$ almost surely.

\begin{lemma}\label{random_order}
Assume that either $\rho_n=1$ and $\L$ is a graphon, or that $\rho_n\to 0,$ $n\rho_n\to\infty,$ and $\L$ is a kernel. Next, let $\ell:[0,1]\to\F$ be measurable and let $\sigma_n$ denote the permutation that orders $U_1,\dots, U_n,$ namely $U_{\sigma_n(1)}\leq \cdots\leq U_{\sigma_n(n)}.$ Then, almost surely, we have  
        \begin{enumerate}[label=(\alph*)]
        \item $\norm{\rho_n^{-1}L_n^\sigma - \L}_\square\to 0$,
        \item $\norm{\ell_n^\sigma(\cdot,w,x) - \ell(\cdot,w,x)}_{1}\to 0$
    for each $(w,x)\in \{0,1\}\times [0,1].$ 
    \end{enumerate}
\end{lemma}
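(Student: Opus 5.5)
Both parts reduce to one fact: after sorting, the $i$th order statistic $U_{\sigma_n(i)}$ is uniformly close to $i/n$. So the first step is to show that, after relabeling by $\sigma_n$, the evaluation points approximate the identity map on $[0,1]$. Define the step map $\psi_n:[0,1]\to[0,1]$ by $\psi_n(t)=U_{\sigma_n(i)}$ for $(i-1)/n<t<i/n$. By Glivenko--Cantelli, almost surely
\[
\max_i \bigl|U_{\sigma_n(i)}-i/n\bigr| \to 0 ,
\]
so $\sup_t|\psi_n(t)-t|\to 0$ almost surely. With this in hand, $\ell_n^\sigma(\cdot,w,x)=\ell(\psi_n(\cdot),w,x)$ and the expected graphon is $\L(\psi_n(s),\psi_n(t))$. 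What remains is to transfer uniform convergence of the evaluation points into $L^1$ convergence for merely measurable, integrable functions.

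For part (b), fix $(w,x)$ and write $g=\ell(\cdot,w,x)$. Then $|g|\le C$ and $g$ is measurable. Given $\varepsilon>0$, choose a continuous $h$ with $\norm{g-h}_1<\varepsilon$ and $|h|\le C$. Since $h$ is uniformly continuous and $\psi_n\to\mathrm{id}$ uniformly, $\norm{h\circ\psi_n-h}_1\to 0$. The hard term is $\norm{g\circ\psi_n-h\circ\psi_n}_1=\frac1n\sum_i|g-h|(U_i)$. By the strong law of large numbers this tends to $\norm{g-h}_1<\varepsilon$ almost surely. Taking $\varepsilon$ over a countable sequence gives the a.s. limit. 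This handles each fixed $(w,x)$, which is all the lemma claims.

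Part (a) I split into two terms. The first is $\norm{\rho_n^{-1}L_n^\sigma-H_n}_\square$, where $H_n(i,j)=\rho_n^{-1}\min\{1,\rho_n\L(U_i,U_j)\}$ is the conditional edge-probability matrix, rescaled and relabeled. The second is $\norm{H_n-\L}_\square$.

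For the first (noise) term, condition on the $U_i$. The cut norm of an $n\times n$ step function is at most four times the maximum over vertex subsets $S,T\subseteq[n]$; for $0$-$1$-valued $S,T$ a sup over fractional sets reduces to this by linearity. For fixed $S,T$ the sum is a sum of independent centered bounded variables with variance at most $\rho_n n^2$. Bernstein's inequality plus a union bound over $4^n$ pairs then gives deviation $O(\sqrt{n/\rho_n}\,/n)$ with probability $1-e^{-cn}$. This error is $o(1)$ precisely because $n\rho_n\to\infty$, and Borel--Cantelli upgrades it to almost sure convergence.

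For the second term, first replace $\rho_n^{-1}\min\{1,\rho_n\L\}$ by $\L$. The $L^1$ error is $\E|\L-\rho_n^{-1}\min(1,\rho_n \L)|$ evaluated empirically; it is bounded by the empirical average of $\L\,1\{\L>M\}$ once $\rho_n^{-1}>M$. I control this with the U-statistic strong law, then let $M\to\infty$. The diagonal terms contribute $O(\frac1{n^2}\sum_i \L(U_i,U_i))$, which needs care; I would truncate $\L$ at $M$ first so the diagonal term is at most $M/n$.

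This reduces everything to $\norm{\L(\psi_n,\psi_n)-\L}_1\to0$ a.s., which I prove exactly as in (b). Approximate $\L$ in $L^1([0,1]^2)$ by a continuous $K$, which works because $\L$ is integrable. The term with $K$ vanishes by uniform continuity. The term with $\L-K$ is a U-statistic $\frac1{n^2}\sum_{i\ne j}|\L-K|(U_i,U_j)$ plus a diagonal term. The U-statistic converges a.s. to $\norm{\L-K}_1$ by Hoeffding's SLLN for U-statistics. The diagonal is again handled by truncating $\L$ at $M$, with the residual $\L 1\{\L>M\}$ treated as a separate U-statistic.

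I expect the main obstacle to be the integrable-but-unbounded sparse case. There the truncation, the diagonal terms and the $\min\{1,\cdot\}$ cap must be handled together. The concentration step also needs $H_n$ entries bounded by $\rho_n^{-1}$, which the cap supplies. In the dense case $\rho_n=1$ and $\L\le1$, so these issues disappear.
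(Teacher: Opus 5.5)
\textbf{How your route compares with the paper's.} Your part (b) is the paper's argument. Both approximate by a continuous $h$, use $\norm{(g-h)\circ\psi_n}_1=\frac1n\sum_i|g-h|(U_i)$ with the SLLN, and handle the continuous piece by Glivenko--Cantelli. For part (a) you take a genuinely different route. The paper handles (a) entirely by citation. In the dense case it uses Lemma~4.3 of \citet{graphon} with Borel--Cantelli for the Bernoulli noise, and Theorem~2.14(a) of \citet{key} for the sampled kernel. In the sparse case it invokes Theorem~2.14(b) of \citet{key} outright.

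You instead prove the noise bound by Bernstein and a union bound over $4^n$ pairs of vertex sets; this is essentially the content of the cited lemma, carried over to the sparse scaling. You then show that the sampled kernel converges in $L^1$ under the sorting relabeling by lifting the part (b) trick to two dimensions. Since $\norm{f(\psi_n,\psi_n)}_1$ is an empirical average of $|f|(U_i,U_j)$, continuous approximation plus Hoeffding's SLLN for U-statistics suffices, with truncation to remove the cap $\min\{1,\cdot\}$. Your route is longer and must cope with unbounded kernels. In return it is self-contained and treats the dense and sparse cases with one argument. It also gives the stronger $L^1$ statement for the sampled kernel. Finally, it checks directly that the particular permutation $\sigma_n$ works for the graph, which matters because Definition~\ref{Mine} needs one relabeling aligning graph and outcomes.

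\textbf{The gap: simultaneity in $(w,x)$.} The lemma asserts that, almost surely, (b) holds for every $(w,x)$ simultaneously, with a single null set for uncountably many $x$. This is what is needed to say that almost every realization converges in the sense of Definition~\ref{Mine}, as Corollary~\ref{clarify} uses. Your remark that per-$(w,x)$ almost sure convergence is ``all the lemma claims'' is therefore incorrect, and your argument stops one step short. The fix is the paper's last step. Every element of $\F$ is $C$-Lipschitz in $x$, so $x\mapsto\norm{\ell_n^\sigma(\cdot,w,x)-\ell(\cdot,w,x)}_1$ is $2C$-Lipschitz uniformly in $n$. Intersect the almost sure events over rational $x$ and extend by density.

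\textbf{Two loose ends in (a).} First, never evaluate $\L$ on the diagonal. The diagonal is a null set, so integrability puts no constraint on $\L(U_i,U_i)$. For instance, with $\L(u,u)=e^{1/u}$, $\frac1{n^2}\sum_i\L(U_i,U_i)$ has infinite $\limsup$ almost surely. Truncating at $M$ controls the bounded part but not the residual's diagonal. Since $L_n$ has no loops, give $H_n$ zero diagonal; alternatively, note that the cap bounds its diagonal contribution by $1/(n\rho_n)$. Then run every comparison, including the replacement of the capped kernel by $\L$, over $i\neq j$ only. The mass of $\L$ on the diagonal blocks vanishes by absolute continuity of the integral.

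Second, for unbounded $\L$ the conditional variance is bounded by a constant times $\rho_n\sum_{i\neq j}\L(U_i,U_j)$, not by $\rho_n n^2$. This is $O(\rho_n n^2)$ eventually almost surely by the U-statistic SLLN. Restrict to that event before applying Borel--Cantelli.
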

A special case of Lemma \ref{random_order} is the Erd\H{o}s--R\'{e}nyi graph $G(n,p),$ where each unordered pair of vertices is independently connected with probability $p.$ If $L_n=G(n,p)$ and $\L$ is the constant graphon with value $p,$ then we have $\norm{L_n^\sigma-\L}_\square\to 0$ almost surely.

\subsubsection{Rule-Based Examples}\label{rule_example}
Here, we show that convergence of finite populations extends well beyond the random-graph model considered in the previous section. One example is to obtain discrete approximations of Riemann integrable $(\L,\ell)$ along deterministic evaluation points $\{i/n\}_{i=1}^n.$ This example actually gives $L^1$ convergence, which is stronger than cut-norm convergence.
\begin{lemma}\label{det_point}
    Let $\L$ be a Riemann integrable graphon with range $\{0,1\}.$ Let $\ell(\cdot,w,x)$ be Riemann integrable for each $(w,x)\in\{0,1\}\times[0,1].$ Define $(G_n,v_n)$ by 
    $$G_n(i,j) = \L(i/n,j/n), \qquad v_n(i) = \ell(i/n).$$ Then, we have
    \begin{enumerate}[label=(\alph*)]
        \item $\norm{G_n - \L}_1\to 0$ and thus $\norm{G_n-\L}_\square\to 0$,
        \item $\norm{v_n(\cdot,w,x) - \ell(\cdot,w,x)}_{1}\to 0$
    for each $(w,x)\in \{0,1\}\times [0,1].$ 
    \end{enumerate}
\end{lemma}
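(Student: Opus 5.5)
Both parts reduce to the same mechanism: the step function built from values at the grid points $i/n$ is a Riemann-type sampling of a Riemann integrable function, and its $L^1$ distance to that function is controlled by upper minus lower Darboux sums. We may assume the statement means the step function on the open cell takes the value at its upper-right corner: $G_n(s,t)=\L(i/n,j/n)$ on $((i-1)/n,i/n)\times((j-1)/n,j/n)$. The symmetry of $\L$ makes $G_n$ symmetric. To make $G_n$ a simple graph we set the diagonal entries to zero; this changes $\norm{G_n-\L}_1$ by at most $n\cdot n^{-2}=1/n$, which is negligible.

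For part (a), on each cell $Q_{ij}=[(i-1)/n,i/n]\times[(j-1)/n,j/n]$ we bound pointwise
$$|G_n(s,t)-\L(s,t)|\le M_{ij}-m_{ij},$$
where $M_{ij}$ and $m_{ij}$ are the supremum and infimum of $\L$ on the closed cell; this holds because $(i/n,j/n)\in Q_{ij}$. Integrating gives
$$\norm{G_n-\L}_1\le \sum_{i,j}\frac{1}{n^2}(M_{ij}-m_{ij})=U(\L,P_n)-L(\L,P_n),$$
the difference of the upper and lower Darboux sums for the uniform grid $P_n$. Riemann integrability on $[0,1]^2$ means this difference tends to zero along partitions whose mesh goes to zero. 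Since $\L$ is bounded, this follows from the Lebesgue criterion: the discontinuity set is null, and the grid sums converge.

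The only genuine point to check is that uniform grids suffice rather than some cleverly chosen partition; this is the standard equivalence for Riemann integrability via mesh. Cut-norm convergence then follows at once from $\norm{\cdot}_\square\le\norm{\cdot}_1$, since $\left|\int_{S\times T}h\right|\le\int|h|$. The range $\{0,1\}$ is used only so that $G_n$ is a genuine adjacency matrix.

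Part (b) is the one-dimensional version of the same argument, applied for each fixed $(w,x)$ to the function $t\mapsto\ell(t,w,x)$. On $[(i-1)/n,i/n]$ we have
$$|v_n(t,w,x)-\ell(t,w,x)|\le \sup-\inf$$
of $\ell(\cdot,w,x)$ on that interval. Summing with weight $1/n$ gives upper minus lower Darboux sums, which tend to zero by Riemann integrability. The only subtlety is the domain: the points $i/n$ lie in $(0,1]$, and the value at the boundary point $0$ is never used, which is harmless. No step is a real obstacle; the mildest care is needed in justifying that uniform-mesh Darboux differences vanish for a bounded Riemann integrable function on the square, and in handling the diagonal of $G_n$.
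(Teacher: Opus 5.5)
Your proof is correct, but it takes a different route from the paper. The paper argues pointwise. By the Lebesgue criterion $\L$ is continuous almost everywhere, and at any continuity point $(u,v)$ with irrational coordinates the sampled value $\L(i_n/n,j_n/n)$ converges to $\L(u,v)$, since $i_n/n\to u$ and $j_n/n\to v$. Dominated convergence then gives $\norm{G_n-\L}_1\to 0$, and the same argument handles each $\ell(\cdot,w,x)$.

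You instead bound the error cell by cell by the oscillation of $\L$ on the closed cell. This gives the explicit inequality $\norm{G_n-\L}_1\le U(\L,P_n)-L(\L,P_n)$, and you then use the mesh characterization of Riemann integrability. That characterization is Darboux's theorem and needs no appeal to the Lebesgue criterion. Alternatively, take one partition $P$ with $U(P)-L(P)<\varepsilon$ and note that the uniform-grid cells straddling its finitely many lines have total area $O(1/n)$.

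The two routes buy different things:
\begin{itemize}
\item Your version avoids dominated convergence and is quantitative: any control on the oscillation of $\L$ becomes a rate. For the half graph, the oscillation is nonzero only on $O(n)$ cells meeting the anti-diagonal or the diagonal, so $\norm{H_n-\L}_1=O(1/n)$.
\item The paper's version is shorter and also shows that $G_n\to\L$ pointwise almost everywhere.
\end{itemize}

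You also handle a point the paper passes over. The entry $G_n(i,i)=\L(i/n,i/n)$ may equal $1$, so the diagonal must be zeroed for $G_n$ to be a simple graph. This costs at most $1/n$ in $L^1$, as you note.
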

Assuming $\norm{\L}_1>0,$ the above result implies that $G_n$ is dense. For instance, a special case of Lemma \ref{det_point} is the half graph $H_n$ given by $H_n(i,j) = 1_{i\neq j}\cdot 1_{i+j>n}$. Letting $\L$ denote the graphon $\L(u,v) = 1_{u\neq v}\cdot 1_{u+v>1},$ we have $\norm{H_n-\L}_1\to 0$. For sparse $G_n$, a similar result can be obtained by thinning a deterministic sequence of convergent dense graphs. Here, the convergence is no longer necessarily in the $L^1$ norm.

\begin{lemma}\label{thinning}
    Let $\{A_n\}_{n=1}^\infty$ denote a sequence of graphs satisfying $\norm{A_n-\L}_\square\to 0$ where $\L$ is a graphon. Let $\{\rho_n\}_{n=1}^\infty$ denote a sequence of scale factors satisfying $\rho_n\to 0$ and $n\rho_n\to\infty$. Finally, let $G_n$ denote the graph that independently keeps each edge of $A_n$ with probability $\rho_n.$ Then, we have
    \begin{align*}
        \norm{\rho_n^{-1}G_n - \L}_\square\to 0
    \end{align*}
    almost surely.
\end{lemma}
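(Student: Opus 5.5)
By the triangle inequality,
$$\norm{\rho_n^{-1}G_n-\L}_\square\le \norm{\rho_n^{-1}G_n-A_n}_\square+\norm{A_n-\L}_\square ,$$
and the second term tends to zero by hypothesis. So it suffices to show that $\norm{\rho_n^{-1}G_n-A_n}_\square\to 0$ almost surely. Write $X_n=\rho_n^{-1}G_n-A_n$. This is a random symmetric matrix with zero diagonal whose entries above the diagonal are independent and mean zero. Each entry is $0$ where $A_n(i,j)=0$; where $A_n(i,j)=1$ it equals $\rho_n^{-1}\xi_{ij}-1$ with $\xi_{ij}\sim\mathrm{Ber}(\rho_n)$.

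For step functions, the supremum in Definition \ref{Cut norm} is attained, up to a factor of $4$, over vertex subsets. Indeed, for fixed $S$ the integral is linear in the indicator of $T$ restricted to each cell, so optimal sets are unions of cells. Hence
$$\norm{X_n}_\square\le \frac{4}{n^2}\max_{S,T\subseteq[n]}\Bigl|\sum_{i\in S,\,j\in T}X_n(i,j)\Bigr| .$$
For fixed $S,T$ we write the sum as a sum over unordered pairs with coefficients $c_{ij}\in\{0,1,2\}$. We then apply Bernstein's inequality to the independent centered variables $c_{ij}\rho_n^{-1}(\xi_{ij}-\rho_n)$. These are bounded by $2\rho_n^{-1}$, and their total variance is at most $4n^2\rho_n^{-1}$. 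With deviation level $\epsilon n^2$ this gives
$$\P\Bigl(\Bigl|\sum_{S\times T}X_n\Bigr|>\epsilon n^2\Bigr)\le 2\exp\Bigl(-\frac{\epsilon^2n^4/2}{4n^2\rho_n^{-1}+\tfrac{2}{3}\rho_n^{-1}\epsilon n^2}\Bigr)=2\exp\bigl(-c_\epsilon\, n^2\rho_n\bigr),$$
where $c_\epsilon>0$ depends only on $\epsilon$.

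A union bound over the $4^n$ pairs $(S,T)$ then gives
$$\P\bigl(\norm{X_n}_\square>4\epsilon\bigr)\le 2\cdot 4^n e^{-c_\epsilon n^2\rho_n}=2\exp\bigl(-n(c_\epsilon n\rho_n-\log 4)\bigr).$$
Because $n\rho_n\to\infty$, the exponent eventually exceeds $2n$, so these probabilities are summable in $n$. By Borel--Cantelli, $\limsup_n\norm{X_n}_\square\le 4\epsilon$ almost surely. Intersecting over the countably many $\epsilon=1/k$ yields $\norm{X_n}_\square\to 0$ almost surely, and combined with the triangle inequality this proves the lemma.

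The main obstacle is the concentration step. The union bound costs $e^{n\log 4}$, and this must be beaten by Bernstein's exponent of order $n^2\rho_n$. This works only because the variance term scales as $\rho_n^{-1}$ and not $\rho_n^{-2}$: the sub-Gaussian regime is what produces $n^2\rho_n$, and the requirement that this dominate $n$ is exactly the hypothesis $n\rho_n\to\infty$. The bounded-increment term $\rho_n^{-1}\epsilon n^2$ is of the same order, so it does not spoil the bound. We should also note that no coupling across $n$ is needed, since Borel--Cantelli applies to any joint construction of the $G_n$. Finally, $\rho_n\to0$ itself is not used beyond $\rho_n\le 1$; the argument only requires that the thinning probability $\rho_n$ satisfy $n\rho_n\to\infty$.
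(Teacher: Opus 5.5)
Your proof is correct, and its skeleton matches the paper's. Both use the triangle inequality $\norm{\rho_n^{-1}G_n-\L}_\square\le\norm{\rho_n^{-1}G_n-A_n}_\square+\norm{A_n-\L}_\square$ and handle the second term by hypothesis.

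The only difference is in the first term. The paper disposes of it in one line by citing Lemma 7.3 of Borgs, Chayes, Cohn and Zhao. You instead prove the needed concentration from scratch:
\begin{enumerate}
\item you reduce the cut norm of a step function to vertex cuts;
\item you apply Bernstein's inequality with variance of order $n^2\rho_n^{-1}$, so the exponent is of order $n^2\rho_n$;
\item you take a union bound over the $4^n$ cuts;
\item you finish with Borel--Cantelli.
\end{enumerate}

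The citation buys brevity. Your version buys self-containment. It also makes explicit the Borel--Cantelli step that upgrades a summable tail bound to almost sure convergence, which the paper's one-line proof leaves implicit. And it shows transparently that only $n\rho_n\to\infty$ is used, not $\rho_n\to 0$.

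One small remark: the factor $4$ in your reduction is not needed. The integral $\int_{S\times T}X_n$ is bilinear in the cell-wise fractions $s_i=n\lambda\bigl(S\cap((i-1)/n,i/n)\bigr)$ and $t_j=n\lambda\bigl(T\cap((j-1)/n,j/n)\bigr)$, which range over $[0,1]^n$. Its extremes are therefore attained at $0/1$ vectors, so the cut norm equals the maximum over vertex subsets exactly. The weaker bound you use is harmless.
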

Such random thinning is only one example of thinning a dense convergent sequence. More generally, the point is
that edges must be thinned in a sufficiently uniform way so that, after rescaling by $\rho_n$, the limiting geometry $\L$ is preserved. Finally, given the edge density $\rho_n$, the total number of edges is of order $n^2\rho_n.$ The notion of graph convergence is stable to perturbing a vanishing fraction of edges.

\begin{lemma}\label{perturb}
    Let $\{A_n\}_{n=1}^\infty$ denote a sequence of graphs satisfying
    $$\norm{\rho_n^{-1}A_n - \L}_{\square}\to 0$$
    for a sequence of scale factors $\{\rho_n\}_{n=1}^\infty.$ If $B_n$ differs from $A_n$ on $o(n^2\rho_n)$ many unordered pairs of vertices, 
    $$\norm{\rho_n^{-1}B_n - \L}_{\square}\to 0.$$
\end{lemma}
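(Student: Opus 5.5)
By the triangle inequality for the cut norm,
$$\norm{\rho_n^{-1}B_n-\L}_\square\le \norm{\rho_n^{-1}A_n-\L}_\square+\rho_n^{-1}\norm{B_n-A_n}_\square .$$
The first term tends to zero by hypothesis. It therefore suffices to show that $\rho_n^{-1}\norm{B_n-A_n}_\square\to 0$.

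We identify $B_n-A_n$ with its step function on $[0,1]^2$. This function is supported on the squares corresponding to ordered pairs $(i,j)$ at which $A_n$ and $B_n$ differ, and its absolute value there is at most $1$. For any measurable $S,T\subseteq[0,1]$,
$$\left|\int_{S\times T}(B_n-A_n)\,d\lambda\right|\le \norm{B_n-A_n}_1=\frac{1}{n^2}\#\{(i,j):A_n(i,j)\ne B_n(i,j)\}.$$
So the cut norm is dominated by the $L^1$ norm. Each unordered pair that differs contributes at most two ordered pairs, so with $m_n=o(n^2\rho_n)$ differing unordered pairs we get $\norm{B_n-A_n}_1\le 2m_n/n^2$.

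Multiplying by $\rho_n^{-1}$ gives
$$\rho_n^{-1}\norm{B_n-A_n}_\square\le \frac{2m_n}{n^2\rho_n}\to 0,$$
which completes the argument.

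The main point of care is the labeling. Definition~\ref{Conv} allows a permutation, and in that case we should apply the same $\varphi_n$ to both graphs, since $\norm{\cdot}_\square$ is preserved when both graphs are relabeled consistently. The count of differing pairs is also unchanged by relabeling, so the statement is unaffected. There is no real obstacle here: the whole argument is the observation that the cut norm is dominated by the $L^1$ norm.
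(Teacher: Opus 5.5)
Your proof is correct and matches the paper's argument: the triangle inequality, domination of the cut norm by the $L^1$ norm, and the count $\norm{A_n-B_n}_1\le 2m_n/n^2=o(\rho_n)$. Your remark about applying a common relabeling is a harmless extra, since the lemma as stated involves no permutations.
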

Compared to thinning, Lemma \ref{perturb} shows that adding or deleting $o(n^2\rho_n)$ edges even in a non-uniform way cannot change the limiting geometry. Bringing all the examples together, we see that convergence is a structural condition on the realized finite populations, shared by deterministic, thinned, and edge-perturbed graph sequences beyond randomly generated examples.

\subsubsection{Units with Latent Positions}

We conclude by providing an interpretation of the examples above in the context of causal inference. Assume that each unit $i$ has a latent position $u_i\in [0,1],$ and further assume the empirical distribution of such latent positions stabilizes as $n$ grows. Then, $\L$ captures the large-scale interaction structure between units with latent positions $u_i,u_j$ via $\L(u_i,u_j).$ This may encode an intensity of interaction (e.g. Section \ref{stochastic_example}) or a more specific deterministic rule (e.g. Section \ref{rule_example}). Moreover, $\ell$ encodes potential outcome functions so that unit $i$'s outcome at exposure $(w,x)$ is given by $\ell(u_i,w,x).$ Finally, $\rho_n$ represents a global constraint on the total number of realized interactions. While literal random thinning is not required, the realized interactions must not be biased toward specific latent positions and instead reflect the underlying interaction structure $\L.$

For example, consider a growing workplace where latent positions encode an employee's role or level. Then, interactions between different employees may depend on their role, captured by $\L.$ Moreover, the realized interactions may depend on time or capacity constraints. As long as these constraints act globally instead of only affecting employees with certain roles, the realized interaction graph may still reflect the underlying structure $\L$. Finally, interactions based on incidental deviations from the role-based pattern $\L$ are allowed, as long as such interactions only constitute a vanishing fraction of the total number of interactions.

The takeaway is that our notion of convergence is plausible when the given finite population reflects an underlying large-scale structure. Random-graph sampling, deterministic discretization, sparse thinning, and edge perturbations are different mechanisms that may generate the given finite population, rather than assumptions required by the design-based causal model.

\section{Central Limit Theorems}\label{CLT_Section}

In this section, we show that our notion of convergence enables design-based asymptotics under global interference. Given a convergent sequence of populations \(\{(G_n,v_n)\}_{n=1}^\infty\), we show that Bernoulli assignment yields \(\sqrt n\)-asymptotic normality of the Horvitz--Thompson and H\'ajek estimators for the average direct effect. We state our formal result under some technical conditions on the scale factors $\{\rho_n\}_{n=1}^\infty$, the kernel $\mathcal L$, and the degree profile of $G_n$. While our notion of convergence captures the large-scale geometry of the exposure graph, the technical conditions characterize our asymptotic regime by regulating the growth of neighborhood sizes. 
\begin{condition}[Scale Factors]\label{Scale}
\normalfont We require
\leavevmode
    \begin{enumerate}[label=(\alph*), ref=\thecondition(\alph*)]
        \item\label{Scalea} $\rho_n=1$ (dense case) or $\rho_n\to 0$ and $n\rho_n\to\infty$ (sparse case),
        \item\label{Scaleb} $\liminf\limits_{n\to\infty} \log\rho_n/\log n > -1.$
    \end{enumerate}
\end{condition}

\begin{condition}[Kernel]\label{Kernel}
\normalfont We require
\leavevmode
    \begin{enumerate}[label=(\alph*), ref=\thecondition(\alph*)]
        \item\label{Kernela} $\mathcal{L}\in L^4([0,1]^2),$
        \item\label{Kernelb} $\int_0^1 \min\{1,\L(u,v)\}\,dv\geq c$ for any $u\in[0,1]$ where $c>0$ is a constant.
    \end{enumerate}
\end{condition}

\begin{condition}[Degree Profile]\label{Deg}
\normalfont We require
\leavevmode
    \begin{enumerate}[label=(\alph*), ref=\thecondition(\alph*)]
        \item\label{Dega} $\sum_{i=1}^n d_i^4 = O(n^5\rho_n^4),$
        \item\label{Degb} $\min\limits_{1\leq i\leq n} d_i =\Omega(n\rho_n),$
    \end{enumerate}
    where $d_i=\deg_{G_n}(i)$.
\end{condition}

Condition \ref{Deg} controls the relative growth of the units' neighborhoods, while Condition \ref{Kernel} imposes analogous regularity on the limiting object $\L$. In sparse regimes, Condition \ref{Scaleb} rules out edge densities that decay too quickly. As shown in Lemma \ref{interpret} of the \hyperref[app]{Appendix}, there exists $\alpha > - 1$ such that $\rho_n\geq n^\alpha$ for all large enough $n.$ In the dense regime where $\rho_n=1,$ the limit object $\L$ is a graphon, so Conditions \ref{Kernela} and \ref{Dega} are automatically satisfied. We now state our main theorem.

\begin{theorem}
\label{Main}
    Let $\{(G_n, v_n)\}_{n=1}^\infty$ be a deterministic sequence of finite populations under anonymous interference. Let $\mathcal{L}$ be a kernel and $\ell: [0,1]\to \F$ be measurable. Finally, let $\{\rho_n\}_{n=1}^\infty$ denote a sequence of scale factors in $(0,1]$. Assign treatments $W_i\overset{\text{i.i.d.}}{\sim} \text{Ber}(\pi)$ where $\pi\in(0,1).$ Under Conditions \ref{Scale}--\ref{Deg}, if 
    $$(G_n,v_n)\lims (\L,\ell),$$
    then we have
    \begin{align*}
        \sqrt{n}(\hat\tau_n^{\HT}(G_n,v_n) - \overline\tau_n(G_n,v_n))&\limd N(0, V^{\HT}_{\L,\ell,\pi}),\\
        \sqrt{n}(\hat\tau_n^{\Haj}(G_n,v_n) - \overline\tau_n(G_n,v_n))&\limd N(0, V^{\Haj}_{\L,\ell,\pi}).
    \end{align*}
    Moreover, the asymptotic variances are given by
    \begin{align}
        V^{\HT}_{\L,\ell,\pi} &= \pi(1-\pi)\E[(\mathcal{R}_i + \mathcal{Q}_i)^2],\qquad
        V^{\Haj}_{\L,\ell,\pi} = \pi(1-\pi)\left(\Var(\mathcal{R}_i+\mathcal{Q}_i) + \left(\E[\mathcal{Q}_i]\right)^2\right)\label{eq:lim_var}
    \end{align}
    where $U_i\overset{\text{i.i.d.}}{\sim} U[0,1]$,
    \begin{align*}
        \mathcal{R}_i &= \frac{\ell(U_i,1,\pi)}{\pi} + \frac{\ell(U_i,0,\pi)}{1-\pi},\qquad
        \mathcal{Q}_i = \E\left[\left.\frac{\mathcal{L}(U_i,U_j)\left(\ell'(U_j,1,\pi) - \ell'(U_j,0,\pi)\right)}{\int_{[0,1]}\L(u,U_j)\,du}\right| U_i\right],
    \end{align*}
    and the derivatives are with respect to the third argument.
\end{theorem}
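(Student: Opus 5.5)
Work with the Horvitz--Thompson estimator first and derive the H\'ajek result by a delta-method argument at the end. Relabel units by the permutations $\varphi_n$ from Definition \ref{Mine}; both estimators and $\overline\tau_n$ are invariant under relabeling, so we may assume $\norm{\rho_n^{-1}G_n-\L}_\square\to0$ and $\norm{v_n(\cdot,w,x)-\ell(\cdot,w,x)}_1\to0$ directly.

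\textbf{Step 1: linearize the exposures.} Write $R_i=\pi+\Delta_i$ with $\Delta_i=d_i^{-1}\sum_{j\sim i}(W_j-\pi)$. Condition \ref{Degb} gives $\E\Delta_i^2=O((n\rho_n)^{-1})$ and $\E\Delta_i^4=O((n\rho_n)^{-2})$. Taylor expand $f_i(W_i,R_i)$ to third order around $\pi$, using the uniform derivative bounds defining $\F$. Set $Z_i=W_i/\pi-(1-W_i)/(1-\pi)$. Then $\hat\tau^{\HT}_n-\overline\tau_n$ splits into three parts:
\begin{align*}
&\text{(i) } \frac1n\sum_i\bigl(Z_if_i(W_i,\pi)-(f_i(1,\pi)-f_i(0,\pi))\bigr),\\
&\text{(ii) } \frac1n\sum_i Z_if_i'(W_i,\pi)\Delta_i \text{ minus its mean},\\
&\text{(iii) second- and third-order remainders, centered.}
\end{align*}
Show (iii) is $o_p(n^{-1/2})$ by bounding its variance. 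The second-order term $\frac1n\sum_i Z_if_i''\Delta_i^2/2$ has a variance involving covariances of $\Delta_i^2,\Delta_k^2$ across overlapping neighborhoods. Bounding these via fourth-moment/codegree counts is where Condition \ref{Dega} and Condition \ref{Scaleb} enter: we need $(n\rho_n)^{-1}\cdot$(overlap factor)$=o(n^{-1/2})$ after centering. The third-order term is controlled crudely by $\E|\Delta_i|^3=O((n\rho_n)^{-3/2})=o(n^{-1/2})$ since $n\rho_n\gg n^{1/3}$. If this fails, use the second-order expansion plus a finer bound.

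\textbf{Step 2: Hoeffding/linear projection of (ii).} Decompose $Z_if_i'(W_i,\pi)=a_i+b_i(W_i-\pi)$. The term $\frac1n\sum_i a_i\Delta_i$ is linear in $W$: it equals $\frac1n\sum_j (W_j-\pi)\sum_{i\sim j}a_i/d_i$. For $Z_if_i'(W_i,\pi)$, $a_i=\E[Z_if_i'(W_i,\pi)]=f_i'(1,\pi)-f_i'(0,\pi)$. The quadratic part $\frac1n\sum_i b_i(W_i-\pi)\Delta_i$ is a degenerate U-statistic with weights $1/d_i=O((n\rho_n)^{-1})$. Its variance is $O(n^{-2}\sum_{i\sim j}d_i^{-2})=O(1/(n^2\rho_n))=o(1/n)$, so it is negligible. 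Hence
$$\sqrt n(\hat\tau_n^{\HT}-\overline\tau_n)=\frac1{\sqrt n}\sum_j (W_j-\pi)\bigl(r_j+q_j\bigr)+o_p(1),$$
with $r_j=f_j(1,\pi)/\pi+f_j(0,\pi)/(1-\pi)$ and $q_j=\sum_{i\sim j}(f_i'(1,\pi)-f_i'(0,\pi))/d_i$. This is a sum of independent terms with bounded coefficients ($|q_j|\lesssim d_j/\min_i d_i=O(1)$ on average; for Lyapunov, use $\sum_j q_j^4/n^2\to0$, which follows from Condition \ref{Dega}). The Lindeberg CLT gives normality with variance $\pi(1-\pi)\frac1n\sum_j(r_j+q_j)^2$.

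\textbf{Step 3 (main obstacle): convergence of the variance.} We must show $\frac1n\sum_j(r_j+q_j)^2\to\E[(\mathcal R+\mathcal Q)^2]$. The $r_j$ part follows from $L^1$ convergence in Definition \ref{Mine}(b), with boundedness upgrading it to $L^2$. For $q_j$, write it as the step function $q_n(t)=\int \rho_n^{-1}G_n(s,t)g_n(s)/D_n(s)\,ds$, where $g_n=v_n(\cdot,1,\pi)'-v_n(\cdot,0,\pi)'$ and $D_n(s)=\int\rho_n^{-1}G_n(s,u)\,du$. Derivative convergence in $L^1$ must be extracted from pointwise-in-$x$ $L^1$ convergence plus the uniform $C^3$ bound, via interpolation/equicontinuity in $x$; this is why third derivatives appear. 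Degrees converge to $D(s)=\int\L(s,u)\,du$ in $L^1$ by cut-norm convergence tested on $S\times[0,1]$, and $D$ is bounded below by Condition \ref{Kernelb}, matching Condition \ref{Degb}. The key difficulty is that cut-norm convergence only gives weak operator convergence of $T_n h=\int\rho_n^{-1}G_n(\cdot,s)h(s)\,ds$, whereas we need $\norm{T_nh_n}_2^2\to\norm{Th}_2^2$. Plan: write $\norm{T_nh_n}_2^2=\langle h_n,T_n^2h_n\rangle$, and use that cut-norm convergence of $L^p$-bounded kernels implies convergence of $T_n^2$ in operator norm $L^\infty\to L^1$ (a counting-lemma argument for the two-path density, using the $L^4$ bounds of Condition \ref{Kernela} and \ref{Dega} to truncate high-degree vertices). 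Combined with $h_n=g_n/D_n\to h=g/D$ in $L^1$ and uniform boundedness, this gives the limit, which equals $\E[\mathcal Q^2]$ after the relabeling $i\leftrightarrow j$ in the paper's formula. The cross term $\frac1n\sum r_jq_j=\langle r_n,T_nh_n\rangle$ needs only the $L^\infty\to L^1$ convergence of $T_n$ itself.

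\textbf{Step 4: H\'ajek.} Write $\hat\tau^{\Haj}$ as a ratio of $\frac1n\sum W_iY_i$ and $\frac1n\sum W_i$. Expanding gives the same linear representation, with $r_j$ replaced by its centered version $r_j-\bar\mu$, where the $\bar\mu$-adjustment subtracts $\E\mathcal R+\E\mathcal Q$ weighted appropriately. The resulting limit variance is $\pi(1-\pi)(\Var(\mathcal R+\mathcal Q)+(\E\mathcal Q)^2)$, obtained using the same limits as in Step 3.
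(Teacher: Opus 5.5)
Your route is genuinely different from the paper's and its architecture is sound. However, Step 1 as written fails in part of the allowed regime, and Step 4 misstates the centering.

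\textbf{Step 1.} The crude bound $\E|\Delta_i|^3=O((n\rho_n)^{-3/2})=o(n^{-1/2})$ needs $n\rho_n\gg n^{1/3}$, which the hypotheses do not give. Condition \ref{Scaleb} only yields $\rho_n\ge n^{\alpha}$ for some $\alpha>-1$ (Lemma \ref{interpret}), so $\rho_n=n^{-0.9}$ is allowed, and then $(n\rho_n)^{-3/2}=n^{-0.15}$. Since $\F$ controls only three derivatives, you cannot expand further. The remainder therefore has to be controlled through its variance, and the ``finer bound'' you leave open is exactly what is missing. There are two fixes.
\begin{enumerate}
\item Cite Lemma 2 of \citet{Wager}, which holds for any fixed population. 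It gives error $O_p(R(G_n))$ with $R(G_n)=o(n^{-1/2})$ under Condition \ref{Deg} and $n\rho_n\to\infty$, as in the paper's Lemma \ref{move}.
\item Stop at first order and apply Efron--Stein to $F=\sum_i T_i$, where $T_i=Z_i[f_i(W_i,R_i)-f_i(W_i,\pi)-f_i'(W_i,\pi)\Delta_i]$. Resampling $W_m$ changes $T_m$ by $O(\Delta_m^2)$, since $R_m$ does not involve $W_m$. It changes each $T_i$ with $i\sim m$ by $O((|\Delta_i|+d_i^{-1})/d_i)$. Hence $\E(F-F^{(m)})^2\lesssim (n\rho_n)^{-2}+d_m^2(n\rho_n)^{-3}$, and $\sum_m d_m^2=O(n^3\rho_n^2)$ gives $\Var(F/n)=O(1/(n\cdot n\rho_n))=o(1/n)$.
\end{enumerate}

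\textbf{Step 4.} The delta method gives $\hat\tau_n^{\Haj}=\hat\tau_n^{\HT}-(\hat p-\pi)\bar r+o_p(n^{-1/2})$, with $\hat p=n^{-1}\sum_i W_i$ and $\bar r=n^{-1}\sum_j r_j$. So the coefficient is $r_j+q_j-\bar r$, which is the paper's $\nu_j$ in Lemma \ref{move_Haj}. Only $\E\mathcal R$ is removed. Subtracting $\E\mathcal R+\E\mathcal Q$, as you describe, would give $\Var(\mathcal R+\mathcal Q)$ instead. The $(\E\mathcal Q)^2$ term in $V^{\Haj}_{\L,\ell,\pi}$ is exactly the part the H\'ajek normalization does not remove, so your final formula is correct only with the $\bar r$ centering.

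\textbf{Comparison with the paper.} With these repairs your argument is a valid alternative. The paper never proves a CLT or identifies the variance itself. It couples $(G_n^\varphi,v_n^\varphi)$ with a random population $(L_n^\sigma,\ell_n^\sigma)$ drawn from $(\L,\ell)$, shows their linearizations differ by $o_p(n^{-1/2})$, and imports Li--Wager's superpopulation CLT.

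Your Step 3 does the identification directly, and its key claim holds with less machinery than a two-path counting lemma:
\begin{enumerate}
\item For $\norm{h}_\infty\le1$, $\norm{(T_n-T)h}_1\le4\norm{\rho_n^{-1}G_n-\L}_\square$.
\item Also $\norm{(T_n-T)h}_4\le\norm{D_n}_4+\norm{D}_4=O(1)$ by Conditions \ref{Dega} and \ref{Kernela}.
\item The interpolation $\norm{f}_2\le\norm{f}_1^{1/3}\norm{f}_4^{2/3}$ then gives $\sup_{\norm{h}_\infty\le1}\norm{(T_n-T)h}_2\to0$.
\item Finally $\norm{T_nh_n-Th}_2\le\norm{h_n}_\infty\sup_{\norm{h}_\infty\le1}\norm{(T_n-T)h}_2+\norm{\L}_2\norm{h_n-h}_2\to0$.
\end{enumerate}
This is the same analytic fact the paper's $M_2$ bound establishes, only compared against $\L$ rather than $L_n$.

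Your route buys self-containedness and weaker hypotheses. Conditions \ref{Scaleb} and \ref{Kernelb} enter the paper only through the auxiliary graph $L_n$ and Li--Wager's theorem. In your argument $n\rho_n\to\infty$ suffices, and $D\ge c'$ a.e.\ follows from $D_n\ge c'$ and $D_n\to D$ in $L^1$. This is the kind of direct proof the paper's Discussion anticipates. The paper's route buys not having to verify Lyapunov or identify the limiting variance by hand.
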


We emphasize that asymptotic normality in Theorem \ref{Main} only involves randomness from the treatment assignment. The uniform variables $U_i$ describe the limiting variances as in \cite{Wager}. 

\subsection{Proof Sketch of Theorem \ref{Main}}
Our proof of Theorem \ref{Main} is similar for the HT estimator and the H\'{a}jek estimator. Throughout this section, we focus on the HT estimator and write $\hat\tau_n = \hat\tau_n^{\HT}$. The full proof for both the HT and H\'{a}jek estimators is given in the \hyperref[app]{Appendix}. Here, we provide an overview of our proof.

\subsubsection{A Transfer Argument}
We begin by considering an auxiliary sequence of random finite populations. Let $(L_n,\ell_n)$ denote the random finite population drawn from $(\L,\ell),$ based on latent $U_1,\dots, U_n$ as described in \eqref{random_pop} of Section \ref{stochastic_example}. Let the treatments $W_1,\dots, W_n$ be independent of $U_1,\dots, U_n$. Applying Theorem 4 of \cite{Wager} to $\{(L_n,\ell_n)\}_{n=1}^\infty$ gives the following central limit theorem.
\begin{proposition}\label{wager_main}
    Under Conditions \ref{Scale}--\ref{Kernel}, we have 
    \begin{align*}
    \sqrt{n}(\hat\tau_n(L_n,\ell_n) - \overline\tau_n(L_n,\ell_n))\limd N\left(0,V^{\HT}_{\L,\ell,\pi}\right)
\end{align*}
where $V^{\HT}_{\L,\ell,\pi}$ equals the expression given in \eqref{eq:lim_var} of Theorem \ref{Main}.
\end{proposition}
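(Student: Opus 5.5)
The proof is essentially a verification that the random population $(L_n,\ell_n)$ of \eqref{random_pop}, together with the independent Bernoulli design, satisfies the hypotheses of Theorem 4 of \cite{Wager}. The conclusion is then matched to \eqref{eq:lim_var}. No new probabilistic argument is needed.

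The first step is to translate the model. In \cite{Wager}, units carry i.i.d.\ latent $U_i\sim U[0,1]$, and edges are drawn independently with probability $\min\{1,\rho_n\L(U_i,U_j)\}$. Outcomes take the form $Y_i=f_{U_i}(W_i,R_i)$ for a measurable family of outcome functions with bounded derivatives up to third order in the exposure argument. This is exactly $(L_n,\ell_n)$ with $f_{U_i}=\ell(U_i)\in\F$. The measurability of $(t,w,x)\mapsto\ell(t)(w,x)$ and the uniform bound $C$ in \eqref{eq:tech_potential} supply their smoothness and boundedness assumptions. The treatments $W_i\overset{\text{i.i.d.}}{\sim}\mathrm{Ber}(\pi)$ are independent of $\{U_i\}$, matching their design.

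The second step is to check their regularity conditions on $(\rho_n,\L)$ against Conditions \ref{Scale}--\ref{Kernel}. Condition \ref{Scalea} gives either the dense graphon case or the sparse regime $\rho_n\to0$, $n\rho_n\to\infty$. Condition \ref{Scaleb} yields, via Lemma \ref{interpret}, a polynomial lower bound $\rho_n\ge n^{\alpha}$ with $\alpha>-1$, which is the form of density growth they require. Condition \ref{Kernela} supplies the moment assumption $\L\in L^4$. Condition \ref{Kernelb} gives the uniform lower bound on truncated degrees. That bound ensures $\int\L(u,v)\,du$ is bounded away from zero, so the denominator in $\mathcal{Q}_i$ is well-defined and realized degrees are of order $n\rho_n$ with high probability. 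I expect this bookkeeping to be the main obstacle. Their conditions may be stated in a slightly different form, for instance on $\L$ itself rather than on $\min\{1,\L\}$, or as a rate $\rho_n n^{1-\epsilon}\to\infty$. Each such condition has to be shown to follow from ours, possibly with a short argument handling the truncation $\min\{1,\rho_n\L\}$ when $\L$ is unbounded.

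The final step is to identify the estimand and the variance. Their target is the ADE conditional on the realized graph and latent variables, $\overline\tau_n(L_n,\ell_n)$. Their centered HT estimator is $\hat\tau_n(L_n,\ell_n)$ as defined here. Their limiting variance is $\pi(1-\pi)\E[(\mathcal{R}_i+\mathcal{Q}_i)^2]$. Here $\mathcal{R}_i$ comes from the direct term evaluated at the mean exposure $R_i\approx\pi$, and $\mathcal{Q}_i$ is the spillover term: a first-order Taylor expansion of $f_j$ in $R_j$, with $W_i$'s contribution to $R_j$ weighted by $\L(U_i,U_j)/\int\L(u,U_j)\,du$. I would confirm that their expression, after writing their degree normalization in terms of $\L$, coincides with $V^{\HT}_{\L,\ell,\pi}$ in \eqref{eq:lim_var}, which concludes the proposition.
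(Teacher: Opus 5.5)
Your proposal matches the paper's approach: the proposition is obtained by applying Theorem 4 of \cite{Wager} to the random population $(L_n,\ell_n)$, with treatments drawn independently of $U_1,\dots,U_n$, and your hypothesis-checking and variance identification is the bookkeeping that this invocation requires. One small correction: per the paper's discussion, \cite{Wager} only assume $\L\in L^2$, so Condition \ref{Kernela} is stronger than their theorem needs rather than being the assumption it uses.
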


Proposition \ref{wager_main} is a superpopulation CLT, describing the marginal distribution of the centered estimator under repeated draws of both the population $(L_n,\ell_n)$ and the treatment vector $W$. In contrast, our given population $(G_n,v_n)$ is fixed. We bridge these settings by observing that both $(L_n,\ell_n)$ and $(G_n,v_n)$ approximate the same limiting object through the convergences (see Definition \ref{Mine} and Lemma \ref{random_order})
\begin{align}
    (G_n,v_n)\lims (\L,\ell), \qquad (L_n,\ell_n)&\lims (\L,\ell)\text{ a.s.} \label{eq:convergences}
\end{align}
The main technical step is to construct a coupling under which
\begin{align}
    \hat\tau_n(G_n, v_n)-\overline{\tau}_n(G_n,v_n),\qquad\hat\tau_n(L_n,\ell_n) - \overline{\tau}_n(L_n,\ell_n)\label{eq:coupling_quan}
\end{align}
differ by only $o_p(1/\sqrt{n}).$ This allows us to transfer the asymptotic normality of Proposition \ref{wager_main} to our design-based setting by Slutsky's theorem, concluding the proof of Theorem \ref{Main}.

\subsubsection{First-Order Equivalence after Relabeling}
Here, we describe our coupling of \eqref{eq:coupling_quan}. First, we relabel units in $(G_n,v_n)$ and $(L_n,\ell_n)$ to align both populations with the limiting geometry $(\L,\ell).$ Let $\{\varphi_n\}_{n=1}^\infty$ be any deterministic sequence of permutations satisfying the convergence $(G_n,v_n)\lims (\L,\ell)$ as in Definition \ref{Mine}. Next, let $\sigma_n\in S_n$ denote the random permutation that orders $U_1,\dots, U_n$ so that $U_{\sigma_n(1)}\leq\cdots\leq U_{\sigma_n(n)}.$ The relabeled populations are given by $(G_n^\varphi, v_n^\varphi)$ and $(L_n^\sigma,\ell_n^\sigma).$ Assigning the same treatments $W=(W_1,\dots, W_n)$ to both relabeled populations, we obtain our desired coupling
\begin{align}
   \hat\tau_n(G_n^\varphi, v^\varphi_n) - \overline\tau_n(G_n^\varphi, v_n^\varphi),\qquad \hat\tau_n(L_n^\sigma,\ell_n^\sigma) - \overline\tau_n(L_n^\sigma,\ell_n^\sigma).
   \label{eq:actual_coupling}
\end{align}
In particular, each term in \eqref{eq:actual_coupling} is equal in distribution to its corresponding term in \eqref{eq:coupling_quan} as both $\varphi_n,\sigma_n$ are independent of $W.$ The following result is the key consequence of our coupling.
\begin{lemma}\label{coupling_lemma}
    Under the same treatments $W_1,\dots, W_n,$ we have
\begin{align*}
    \hat\tau_n(G_n^\varphi, v^\varphi_n) - \overline\tau_n(G_n^\varphi, v_n^\varphi)  = \hat\tau_n(L_n^\sigma,\ell_n^\sigma) - \overline\tau_n(L_n^\sigma,\ell_n^\sigma) +  o_p(1/\sqrt{n}).
\end{align*}
\end{lemma}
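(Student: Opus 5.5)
The plan is to reduce both sides of the lemma to the same first-order (Hájek-projection) representation in the i.i.d. treatments. The key point is that this representation depends on the population only through quantities that are continuous under the convergence of Definition \ref{Mine}. Since the treatment vector $W$ is shared, the difference of the two sides will then be a sum of terms, each with second moment $o(1/n)$.

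\emph{Step 1 (Taylor expansion).} For a generic population $(G,v)$ with degrees $d_i$, write $\varepsilon_j=W_j-\pi$, so that $R_i-\pi=d_i^{-1}\sum_{j\sim i}\varepsilon_j$. Expand $f_i(W_i,R_i)$ around $x=\pi$ to second order with a third-order remainder; this uses the bounds $|f^{(k)}|\le C$ in \eqref{eq:tech_potential}. Let $\omega_i=W_i/\pi-(1-W_i)/(1-\pi)$. Then $\hat\tau_n-\overline\tau_n$ splits into four pieces:
\begin{enumerate}[label=(\roman*)]
\item a diagonal linear term $\frac1n\sum_i\{\omega_i f_i(W_i,\pi)-\E[\cdot]\}$, which equals $\frac1n\sum_i \varepsilon_i\bigl(f_i(1,\pi)/\pi+f_i(0,\pi)/(1-\pi)\bigr)$;
\item a cross term $\frac1n\sum_i \omega_i f_i'(W_i,\pi)(R_i-\pi)$;
\item a centered quadratic term in $(R_i-\pi)^2$;
\item a cubic remainder.
\end{enumerate}
Write $h_i=f_i'(1,\pi)-f_i'(0,\pi)$ and $q_j=\sum_{i\sim j}h_i/d_i$. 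Inside (ii), the part that is linear in $\varepsilon$ is $\frac1n\sum_j \varepsilon_j q_j$; what remains of (ii) is a degenerate bilinear form $\frac1n\sum_i\sum_{j\sim i}\tilde c_i\varepsilon_i\varepsilon_j/d_i$ with bounded coefficients.

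\emph{Step 2 (the degenerate pieces are negligible).} I will show that the degenerate part of (ii), together with (iii) and (iv), is $o_p(n^{-1/2})$ for \emph{each} population separately; for $(L_n,\ell_n)$ this holds almost surely in $U$, and then unconditionally.
\begin{itemize}
\item For the bilinear form, the off-diagonal products $\varepsilon_i\varepsilon_j$ are mean-zero and pairwise orthogonal. Its variance is therefore $O(n^{-2}\sum_i d_i^{-1})=O(1/(n\cdot n\rho_n))=o(1/n)$ by Condition \ref{Degb} and $n\rho_n\to\infty$.
\item For (iii), I will compute the variance of $\frac1n\sum_i c_i\{(R_i-\pi)^2-\E(R_i-\pi)^2\}$ by expanding into fourth moments of $\varepsilon$. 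The contributing index configurations are those where the neighborhoods of $i$ and $i'$ share vertices, so the count is governed by codegrees. Condition \ref{Dega} together with Cauchy--Schwarz should bound this variance by $O(1/(n\cdot n\rho_n))$.
\item The remainder (iv) is handled by $\E|R_i-\pi|^3\lesssim d_i^{-3/2}$, which after centering gives a contribution of order $(n\rho_n)^{-3/2}$. This is exactly where Condition \ref{Scaleb} is used. If the crude bound falls short for $\alpha\in(-1,-2/3]$, the fallback is to bound the \emph{fluctuation} of the remainder rather than its size, using its Lipschitz dependence on each $W_j$ via an Efron--Stein inequality. For $(L_n,\ell_n)$, the required degree conditions hold a.s. by Lemma \ref{random_order} and concentration of degrees.
\end{itemize}

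\emph{Step 3 (matching the first-order terms).} After Steps 1--2, each side of the lemma equals $\frac1n\sum_j\varepsilon_j(a_j+q_j)+o_p(n^{-1/2})$, where $a_j=f_j(1,\pi)/\pi+f_j(0,\pi)/(1-\pi)$. Because the two sides share $W$, the difference of these sums has variance $\pi(1-\pi)n^{-2}\sum_j(a^G_j-a^L_j+q^G_j-q^L_j)^2$, so it suffices to show the average squared discrepancy tends to zero.
\begin{itemize}
\item The $a$-part is bounded by $4C^2/(\pi\wedge(1-\pi))^2$ times the $L^1$ distance, which vanishes by part (b) of Definition \ref{Mine} and Lemma \ref{random_order} through the limit $\ell(\cdot,w,\pi)$.
\item For the $q$-part, I compare both step functions to the limit $Q(t)=\int\L(s,t)h(s)/D(s)\,ds$, where $D(s)=\int\L(s,u)\,du$ and $h(s)=\ell'(s,1,\pi)-\ell'(s,0,\pi)$.
\end{itemize}
First, cut-norm convergence with $T=[0,1]$ gives $d_i/(n\rho_n)\to D$ in $L^1$, and Condition \ref{Degb} with Condition \ref{Kernelb} keeps $1/d$ and $1/D$ bounded, so $h_i/d_i\cdot n\rho_n\to h/D$ in $L^1$. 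Second, $\rho_n^{-1}G^\varphi_n\to\L$ in cut norm is equivalent to convergence of the associated operators from $L^\infty$ to $L^1$ (with the usual factor 4). Splitting $q^G-Q$ as (operator difference applied to $h/D$) plus ($\rho_n^{-1}G$ applied to the $L^1$-small, bounded difference $n\rho_n h_i/d_i-h/D$) then yields $q^G\to Q$ in $L^1$; the second piece is controlled using Condition \ref{Dega}. Finally, I upgrade $L^1$ to $L^2$ convergence by uniform integrability: $|q_j|\lesssim d_j/(n\rho_n)$, and Condition \ref{Dega} bounds $\frac1n\sum_j (d_j/n\rho_n)^4$, with the analogous bound for $L_n$ coming from $\L\in L^4$ (Condition \ref{Kernela}).

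I expect the main obstacle to be Step 2(iii)--(iv) in the sparsest admissible regime: controlling the centered quadratic and remainder terms with only three derivatives and only the fourth-moment degree bound of Condition \ref{Dega}.
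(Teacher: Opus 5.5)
Your overall architecture matches the paper's. You linearize both sides, then show that the difference of the linear statistics has (conditional) variance $o(1/n)$; your unit-level $a$-part is the paper's $D_1$. Your treatment of the graph-weighted term, which is the heart of the lemma, takes a genuinely different route.

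The paper never passes through the limit. It uses $\rho_n^{-1}\norm{G_n^\varphi-L_n^\sigma}_\square\to0$ directly and splits the discrepancy into three pieces:
$M_1$ compares the normalizations $1/d_j(G)$ and $1/d_j(L)$;
$M_2$ compares the edges $G_{ij}$ and $L_{ij}$;
$M_3$ compares the derivatives $x_j-y_j$.
Each piece is controlled by discrepancy sets, Cauchy--Schwarz against $\sum_i d_i^4$, and, on the $L_n$ side, expectations restricted to the event $E_n$ of Lemma \ref{rare}.

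You instead show that $q^G$ and $q^L$ each converge to the deterministic profile $Q$. The tools are row-sum convergence, the bound $\norm{W}_{\infty\to1}\le4\norm{W}_\square$, and uniform integrability from the fourth-moment degree bounds. The ingredients are the same: the paper's sign-partition step in $M_2$ is the $\infty\to1$ bound, and its small/large discrepancy-set splitting is uniform integrability done by hand.

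What your packaging buys:
\begin{itemize}
\item It is shorter.
\item It works pathwise in $U$, so conditional Chebyshev replaces the $E_n$ bookkeeping.
\item It yields more. Since $a^G+q^G$ converges in $L^2$ to $t\mapsto\ell(t,1,\pi)/\pi+\ell(t,0,\pi)/(1-\pi)+Q(t)$, and $\frac1n\sum_j q_j^4=O(1)$ gives Lindeberg's condition, a Lindeberg CLT for $n^{-1}\sum_j(W_j-\pi)(a_j+q_j)$ already yields Theorem \ref{Main} for the HT estimator. The limiting variance is $\pi(1-\pi)\E[(\mathcal{R}_i+\mathcal{Q}_i)^2]$ because $\mathcal{Q}_i=Q(U_i)$. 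So the coupling with $(L_n,\ell_n)$ and Proposition \ref{wager_main} become unnecessary.
\end{itemize}
In exchange, the paper's direct comparison never has to identify $Q$ or check that it lies in $L^4$.

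Two points need tightening.

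\textbf{Step 2 (linearization).} This step re-derives Lemma \ref{move}, which the paper takes from Lemma 2 of Li and Wager with remainder $O_p\bigl(n^{-1/2}(n\rho_n)^{-1/2}\bigr)$ across the whole regime. You can simply invoke it. If you keep your own derivation, note the following.
\begin{itemize}
\item Your crude cubic bound $(n\rho_n)^{-3/2}$ is $o(n^{-1/2})$ only when $n\rho_n\gg n^{1/3}$.
\item Condition \ref{Scaleb} does not rescue this. In the paper it serves only the exponential degree tail bounds for $L_n$, such as $n^3e^{-Kn\rho_n}\to0$ and the Borel--Cantelli step in Lemma \ref{concentration_deg}.
\item Your Efron--Stein fallback does close the gap. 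Resampling $W_j$ changes the remainder by $O\bigl(n^{-1}\bigl(|R_j-\pi|^3+\sum_{i\sim j}(|R_i-\pi|^2+d_i^{-2})/d_i\bigr)\bigr)$. Summing second moments gives variance $O\bigl(n^{-1}(n\rho_n)^{-2}\bigr)$, using only Condition \ref{Degb} and $\sum_j d_j^2=O(n^3\rho_n^2)$.
\end{itemize}

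\textbf{Convergence of derivatives.} The claim $n\rho_n h_i/d_i\to h/D$ in $L^1$ requires $h_i\to h$ in $L^1$, that is, convergence of the $x$-derivatives. Definition \ref{Mine}(b) only controls function values. You need the difference-quotient argument with the bound on $f''$, as in the paper's proof of \eqref{eq:nice2}, applied to $v_n^\varphi$ and to $\ell_n^\sigma$ separately against $\ell$.
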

The key idea behind Lemma \ref{coupling_lemma} is that first-order equivalence is driven by the fact that $(G_n^\varphi,v_n^\varphi)$ and $(L_n^\sigma,\ell_n^\sigma)$ have similar finite-population geometry. To show this formally, we use the following linearization based on Lemma 2 of  \cite{Wager}.
\begin{lemma}\label{move}
For any finite population $(A_n,u_n),$ write
    \begin{align*}
        \psi_i(A_n,u_n) = \frac{u_n(i,1,\pi)}{\pi} + \frac{u_n(i,0,\pi)}{1-\pi} + \sum_{j=1}^n \frac{A_n(i,j)}{d_j(A_n)}\big(u_n'(j,1,\pi) - u_n'(j,0,\pi)\big)
    \end{align*}
    where $u_n(i,1,\pi)=u_n(i)(1,\pi)$ and the derivatives are with respect to the third argument. Then, Conditions \ref{Scale}--\ref{Deg} give
    \begin{align*}
        \hat\tau_n(G_n^\varphi, v_n^\varphi) - \overline\tau_n(G_n^\varphi, v_n^\varphi) &= \frac{1}{n}\sum_{i=1}^n \psi_i(G_n^\varphi,v_n^\varphi)(W_i - \pi) + o_p(1/\sqrt{n}),\\
        \hat\tau_n(L_n^\sigma, \ell_n^\sigma) - \overline\tau_n(L_n^\sigma,\ell_n^\sigma) &= \frac{1}{n}\sum_{i=1}^n \psi_i(L_n^\sigma,\ell_n^\sigma)(W_i - \pi) + o_p(1/\sqrt{n}).
    \end{align*}
\end{lemma}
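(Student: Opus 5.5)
We prove Lemma \ref{move} for a fixed (deterministic) population $(A_n,u_n)$ satisfying Conditions \ref{Scale}--\ref{Deg}, and then apply it twice. For $(G_n^\varphi,v_n^\varphi)$ the conditions hold by assumption. For $(L_n^\sigma,\ell_n^\sigma)$ we need the degree conditions to hold with probability tending to one. This follows from Condition \ref{Kernel} and concentration of degrees in $G(n,\L,\rho_n)$: Bernstein's inequality and a union bound, using $n\rho_n\geq n^{1+\alpha}$ with $\alpha>-1$. Once the degree conditions hold, we can condition on $U$ and work only with the randomness in $W$.

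The main tool is a Taylor expansion in $x$ around $\pi$. Write $R_i=R_i(A_n)$. Then
\[
f_i(w,R_i)=f_i(w,\pi)+f_i'(w,\pi)(R_i-\pi)+\tfrac12 f_i''(w,\pi)(R_i-\pi)^2+O\big(|R_i-\pi|^3\big),
\]
with uniform constants because $f_i\in\F$. We substitute this into $\hat\tau_n-\overline\tau_n$, using $W_i/\pi-(1-W_i)/(1-\pi)=(W_i-\pi)/(\pi(1-\pi))$, and handle the resulting terms one at a time.

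\textbf{Zeroth-order term.} This term is $\frac1n\sum_i \big(\frac{W_i}{\pi}f_i(1,\pi)-\frac{1-W_i}{1-\pi}f_i(0,\pi)\big)-\frac1n\sum_i(f_i(1,\pi)-f_i(0,\pi))$. It equals exactly the $\frac{u(i,1,\pi)}{\pi}+\frac{u(i,0,\pi)}{1-\pi}$ part of $\psi_i$ multiplied by $(W_i-\pi)$.

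\textbf{First-order term.} Write $R_i-\pi=\frac{1}{d_i}\sum_{j\sim i}(W_j-\pi)$. Since $W_i$ is independent of $R_i$ (no self-loops), the first-order term splits into two pieces. The first piece is $\frac1n\sum_i (f_i'(1,\pi)-f_i'(0,\pi))(R_i-\pi)$. Swapping the order of summation gives $\frac1n\sum_j (W_j-\pi)\sum_i \frac{A_n(i,j)}{d_i}(\cdot)$, which is the network part of $\psi_j$ after using symmetry of $A_n$ and relabeling indices. The second piece is $\frac1n\sum_i c_i(W_i-\pi)(R_i-\pi)$ with bounded $c_i$. It has mean zero, and its variance is $O(n^{-2}\sum_i d_i^{-1})+$ cross terms. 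Each cross term for a pair $(i,k)$ is nonzero only if $i\sim k$, and it is of size $1/(d_id_k)$. Summing gives $O(n^{-2}\cdot n^2\rho_n/(n\rho_n)^2)=O(1/(n\cdot n\rho_n))$, which is $o(1/n)$ by Condition \ref{Degb}.

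\textbf{Remainder and the centering.} Next we bound the remainder and the subtraction of $\overline\tau_n$. The mean of the first-order term is zero. The second-order term contributes to $\overline\tau_n$ through $\E(R_i-\pi)^2=\pi(1-\pi)/d_i$, which cancels against its own mean. We must show that the fluctuation of $\frac1n\sum_i (\text{weights})\big[(R_i-\pi)^2-\E(R_i-\pi)^2\big]$ is $o_p(n^{-1/2})$. This is the main obstacle. In the dense regime the neighborhoods overlap heavily, so the $R_i$ are strongly correlated. Our plan is to expand $(R_i-\pi)^2$ as a double sum over $j,k\sim i$. The diagonal terms $j=k$ contribute $\frac{1}{d_i^2}\sum_{j\sim i}\big((W_j-\pi)^2-\pi(1-\pi)\big)$, whose variance is of order $n^{-2}\sum_{j}\big(\sum_i A(i,j)/d_i^2\big)^2=O(n^{-1}(n\rho_n)^{-2})$. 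The off-diagonal terms form a degenerate U-statistic, and its variance is bounded by $n^{-2}\sum_{j\neq k}\big(\sum_i A(i,j)A(i,k)/d_i^2\big)^2\lesssim n^{-2}\sum_i\sum_{i'} (\text{codegree})^2/(d_i^2d_{i'}^2)$, which is $O(1/n^2)$ via Condition \ref{Degb}. Both terms are therefore $o(1/n)$. The multiplicative factor $(W_i-\pi)$ in front of the second-order term is handled similarly. It introduces additional odd moments, and we control these with the fourth-moment Condition \ref{Dega}.

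\textbf{Cubic remainder.} The cubic remainder is bounded in $L^1$ by $\frac1n\sum_i \E|R_i-\pi|^3\lesssim \frac1n\sum_i d_i^{-3/2}=O((n\rho_n)^{-3/2})$. This is $o(n^{-1/2})$ exactly when $n\rho_n\gg n^{1/3}$, which Condition \ref{Scaleb} alone does not give. If this bound is insufficient, we will instead push the Taylor expansion to third order, using the bounded third derivative in $\F$. We then handle the cubic term by the same U-statistic variance argument, since its mean $\E(R_i-\pi)^3=O(d_i^{-2})$ is small and cancels in the centering anyway.

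Finally, we collect the pieces. The Hájek version needs no separate expansion: its denominators $\frac1n\sum_i W_i=\pi+O_p(n^{-1/2})$ are handled by the delta method, which introduces the additional $\E[\mathcal Q_i]$ correction.
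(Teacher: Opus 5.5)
\textbf{Gap: the cubic Taylor remainder.} Your zeroth-, first- and second-order computations are essentially right. The problem is the remainder after the second-order expansion. Condition \ref{Scaleb} only guarantees $n\rho_n\ge n^{1+\alpha}$ for some $\alpha>-1$, so $n\rho_n$ may grow like $n^{0.01}$. In that regime your bound $\frac1n\sum_i\E|R_i-\pi|^3=O((n\rho_n)^{-3/2})$ is not $o(n^{-1/2})$, as you note yourself. Your fallback does not fix this. Elements of $\F$ only have bounded derivatives up to order three, so a third-order expansion leaves the remainder $\tfrac16\bigl(f_i'''(w,\xi_i)-f_i'''(w,\pi)\bigr)(R_i-\pi)^3$. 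That term is still only $O(|R_i-\pi|^3)$, with the same $L^1$ bound. More generally, no fixed-order expansion whose remainder is controlled in $L^1$ covers all $\alpha>-1$: order $k$ needs $n\rho_n\gg n^{1/k}$ and $k+1$ bounded derivatives. As written, your argument proves the lemma only when $n\rho_n\gg n^{1/3}$.

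\textbf{Missing idea.} Bound the \emph{variance} of the centered remainder, using that it depends only weakly on each $W_j$, rather than bounding its absolute size. Because $W_i\perp R_i$, the HT estimator is exactly unbiased for $\overline\tau_n$. So the leftover term is $T-\E T$, where
\[
T=\frac1n\sum_i\Bigl(\frac{W_i}{\pi}r_i(1,R_i)-\frac{1-W_i}{1-\pi}r_i(0,R_i)\Bigr)
\]
and $r_i$ is the second-order Taylor remainder. Three facts drive the bound: $|r_i(w,x)|\le C|x-\pi|^3$ and $|\partial_x r_i(w,x)|\le C|x-\pi|^2$; resampling $W_j$ changes only unit $j$'s weight and moves $R_i$ by at most $1/d_i$ for $i\sim j$; and $\E(R_i-\pi)^4=O(d_i^{-2})$. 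With $\delta_n=\min_i d_i$, the Efron--Stein inequality then gives
\[
\Var(T)\lesssim\frac{1}{n^2}\sum_{j=1}^n\Bigl(\E|R_j-\pi|^6+\E\Bigl(\sum_{i\sim j}\frac{(|R_i-\pi|+d_i^{-1})^2}{d_i}\Bigr)^2\Bigr)\lesssim\frac{1}{n\delta_n^3}+\frac{\sum_j d_j^2}{n^2\delta_n^4}=O\Bigl(\frac{1}{n(n\rho_n)^2}\Bigr),
\]
which is $o(1/n)$ using only $n\rho_n\to\infty$ and Lemma \ref{interpret}.

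\textbf{How the paper does it.} The paper sidesteps all of this by invoking Lemma 2 of \citet{Wager}. Its remainder $R(A_n)=C\bigl((n\delta_n)^{-1/2}+\sqrt{\sum_{i,j}\gamma_{i,j}}/(n\delta_n^{3/2})\bigr)$ already has this variance-based form. The paper then only checks that $R$ is invariant under relabeling, that $\sum_{i,j}\gamma_{i,j}\le\sum_k d_k^2$, and that $R(G_n)=o(n^{-1/2})$ and $R(L_n)=o_p(n^{-1/2})$. The latter uses the event $E_n$, Markov, Jensen and Lemma \ref{exp}.

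\textbf{Minor correction.} Your off-diagonal U-statistic variance is in general $O\bigl(\sum_j d_j^2/(n^2\delta_n^3)\bigr)=O(1/(n\cdot n\rho_n))$, not $O(1/n^2)$. This follows from $\mathrm{codeg}(i,i')^2\le\mathrm{codeg}(i,i')\,d_i$ and $\sum_{i,i'}\mathrm{codeg}(i,i')=\sum_j d_j^2$. The bound is still $o(1/n)$, so it does not affect your conclusion.
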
 

To conclude, consider the difference
\begin{align}
    \Delta_n = \frac{1}{n}\sum_{i=1}^n \Big(\psi_i(G_n^\varphi,v_n^\varphi) - \psi_i(L_n^\sigma,\ell_n^\sigma)\Big)(W_i-\pi).
    \label{eq:delta}
\end{align}
By applying Lemma \ref{move} and noting $\E[\Delta_n]=0$, our proof of Lemma \ref{coupling_lemma} reduces to showing $\Var[\Delta_n]=o(1/n)$, or equivalently
\begin{align*}
    \frac{1}{n}\sum_{i=1}^n \E\left[\Big(\psi_i(G_n^\varphi,v_n^\varphi) - \psi_i(L_n^\sigma,\ell_n^\sigma)\Big)^2\right] = o(1).
\end{align*}
This is precisely where our notion of convergence enters. The unit-level terms in the $\psi_i$'s are controlled by the convergence of the potential outcome functions. The remaining terms are more delicate as they average derivatives of neighboring units with weights determined by the exposure graph. In the \hyperref[app]{Appendix}, we control such graph-weighted derivative terms via our notion of convergence. We remark that this step is the most technical part of our proof.

\subsection{Random-Graph Models and First-Order Uncertainty}

Based on Theorem \ref{Main}, we give a design-based interpretation of the random-graph asymptotics of \cite{Wager}. Recall that $(L_n,\ell_n)$ denotes the random finite population generated as in \eqref{random_pop}. The following is a conditional version of Proposition \ref{wager_main}.
\begin{corollary}\label{clarify}
    Let $\L$ be a kernel, $\ell:[0,1]\to\F$ be measurable, and assume Conditions \ref{Scale}--\ref{Kernel}. Then, for almost every realization of $\{(L_n,\ell_n)\}_{n=1}^\infty$, we have
    \begin{align*}
        \sqrt{n}(\hat\tau_n^{\HT}(L_n,\ell_n) - \overline\tau_n(L_n,\ell_n))&\limd N(0, V^{\HT}_{\L,\ell,\pi}),\\
        \sqrt{n}(\hat\tau_n^{\Haj}(L_n,\ell_n) - \overline\tau_n(L_n,\ell_n))&\limd N(0, V^{\Haj}_{\L,\ell,\pi}),
    \end{align*}
    conditionally on the realized $\{(L_n,\ell_n)\}_{n=1}^\infty.$ Moreover, the asymptotic variances match \eqref{eq:lim_var} of Theorem \ref{Main}.
\end{corollary}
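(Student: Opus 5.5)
The plan is to deduce Corollary \ref{clarify} from Theorem \ref{Main} by verifying its hypotheses almost surely for the realized random populations. Since the estimators, estimands and Bernoulli design are all invariant under relabeling of units, conditioning on a realization $\{(L_n,\ell_n)\}_{n=1}^\infty$ yields a deterministic sequence to which Theorem \ref{Main} can be applied. Randomness from $W$ is independent of $\{U_i\}$ and of the edge coins, so the conditional law of the estimators given the realization is exactly the design-based law. It therefore suffices to show that, on an event of probability one, the realized sequence satisfies $(L_n,\ell_n)\lims(\L,\ell)$ together with Conditions \ref{Scale}--\ref{Deg}.

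First, convergence: Lemma \ref{random_order} directly gives, almost surely, the permutations $\sigma_n$ with $\norm{\rho_n^{-1}L_n^\sigma-\L}_\square\to0$ and $\norm{\ell_n^\sigma(\cdot,w,x)-\ell(\cdot,w,x)}_1\to0$. One small point is that Lemma \ref{random_order} is stated for each fixed $(w,x)$, whereas Definition \ref{Mine} quantifies over all $(w,x)$. For countably many $(w,x)$ this follows by a countable intersection of null sets. To extend to all $x\in[0,1]$, I would use the uniform Lipschitz bound $|f'|\le C$ for $f\in\F$: the maps $x\mapsto\norm{\ell_n^\sigma(\cdot,w,x)-\ell(\cdot,w,x)}_1$ are $2C$-Lipschitz uniformly in $n$, so convergence on rational $x$ upgrades to all $x$. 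Conditions \ref{Scale} and \ref{Kernel} are assumed, so they hold trivially.

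The main obstacle is Condition \ref{Deg} for the random graph $L_n$, which must hold almost surely. For \ref{Degb}, given $U_i$ the degree of $i$ is a sum of $n-1$ independent Bernoullis with mean at least $\rho_n\int\min\{1,\L(U_i,v)\}\,dv\ge c\rho_n$ per term, after noting $\min\{1,\rho_n\L\}\ge\rho_n\min\{1,\L\}$. A Chernoff bound gives $\P(d_i<c n\rho_n/2)\le \exp(-c' n\rho_n)$. Since Condition \ref{Scaleb} gives $\rho_n\ge n^{\alpha}$ with $\alpha>-1$, we have $n\rho_n\ge n^{1+\alpha}$. Hence a union bound over $i$ gives a summable bound $n e^{-c'n^{1+\alpha}}$, and Borel--Cantelli yields \ref{Degb} eventually almost surely.

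For \ref{Dega}, I would write $d_i\le \rho_n\sum_j\L(U_i,U_j)+(d_i-\E[d_i\mid U])$. The centered part is handled by Bernstein/Chernoff concentration at scale $O(n\rho_n)$ uniformly in $i$, again almost surely via Borel--Cantelli. This reduces the task to showing that $n^{-5}\sum_i(\sum_j\L(U_i,U_j))^4=O(1)$ almost surely. That quantity is a $V$-statistic whose mean is bounded by $\norm{\L}_4^4$-type quantities, using Condition \ref{Kernela} and Jensen. Almost sure boundedness then follows from a strong law for $U$-statistics (Hoeffding), provided the relevant kernels are integrable. If $L^4$ does not give enough integrability for the degenerate diagonal terms, I would instead truncate $\L$ at a level $M$, treat the truncated part as bounded, and control the remainder in mean. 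In the dense case both conditions are automatic except \ref{Degb}, which is handled identically.

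On the resulting probability-one event, Theorem \ref{Main} applies to the fixed sequence, giving both CLTs with the stated variances.
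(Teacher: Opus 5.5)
Your proposal is correct, and its skeleton is the paper's. On a probability-one event, Lemma \ref{random_order} gives $(L_n,\ell_n)\lims(\L,\ell)$, and its proof already contains your rational-$x$ plus Lipschitz extension. Lemma \ref{concentration_deg} gives Condition \ref{Deg}, and Theorem \ref{Main} is then applied to the frozen sequence. Your Chernoff, union bound, Borel--Cantelli argument for Condition \ref{Degb} is also exactly the paper's.

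The real difference is Condition \ref{Dega}. The paper conditions only on $U_i$, so that $d_i\mid U_i\sim\mathrm{Bin}(n-1,g_n(U_i))$ with $c\rho_n\le g_n(U_i)\le\rho_n\lambda(U_i)$, where $\lambda(u)=\int_0^1\L(u,v)\,dv$. Because the mean $\mu_i$ is always at least $c(n-1)\rho_n$, one two-sided multiplicative Chernoff bound gives $\mu_i/2\le d_i\le 3\mu_i/2$ for all $i$, eventually almost surely. This yields Condition \ref{Degb} and $\sum_i d_i^4\lesssim n^4\rho_n^4\sum_i\lambda(U_i)^4$ simultaneously. So only the ordinary SLLN for the i.i.d.\ variables $\lambda(U_i)^4$ is needed, and these are integrable by Jensen and Condition \ref{Kernela}.

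You condition on all of $U$ instead, which leaves a degree-five V-statistic and calls for Hoeffding's SLLN for U-statistics. That works, and your truncation fallback is unnecessary. Sum over $j\neq i$ only: there are no self-loops, and the diagonal values $\L(U_i,U_i)$ are not controlled by $\L\in L^4$. Then each tie pattern has a nonnegative kernel $\prod_r\L(u_0,u_r)^{a_r}$ with $\sum_r a_r=4$, whose mean is at most $\norm{\L}_4^4$ by H\"older applied to each inner $v$-integral. The paper's route is shorter because integrating out the $U_j$ moves the kernel into the binomial parameter. Yours gives a finer comparison with the empirical quantity $\rho_n\sum_{j\neq i}\L(U_i,U_j)$, which is not needed here.

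One step should be restated. Conditional on $U$, the fluctuation $d_i-\E[d_i\mid U]$ is of order $\sqrt{\E[d_i\mid U]}$. For an unbounded $L^4$ kernel with small $\rho_n$, this need not be $O(n\rho_n)$ uniformly in $i$. What Bernstein's inequality does give, for any fixed $K>0$ and outside an event of probability $\exp(-\Omega(n\rho_n))$, is $d_i\le 2\E[d_i\mid U]+Kn\rho_n$. After the union bound this is summable since $n\rho_n\ge n^{1+\alpha}$, and this one-sided bound is all your argument for Condition \ref{Dega} actually uses.
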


The proof is immediate from Lemma \ref{random_order} and Theorem \ref{Main}, after verifying that $\{L_n\}_{n=1}^\infty$ satisfies Condition \ref{Deg} almost surely. We give this verification in the \hyperref[app]{Appendix}. By Corollary \ref{clarify}, almost every realization of $\{(L_n,\ell_n)\}_{n=1}^\infty$ yields the same asymptotic variance, and we see that graph randomness does not introduce additional first-order variation. In other words, first-order uncertainty in this setting is driven by treatment assignment rather than graph randomness. Thus, the main regularity provided by the random-graph model is the structural stability captured in Definition \ref{Mine}, rather than randomness from the latent uniform variables $U_1,\dots, U_n.$

This result can be viewed as part of a broader statistical phenomenon, where random design features may not contribute additional first-order uncertainty if their empirical behavior is sufficiently stable. For instance, under well-specified linear regression, conditioning on the realized covariates changes the exact variance of the OLS estimator, but not its asymptotic variance when the empirical Gram matrix stabilizes \citep{buja2019stats}. Similarly, in semiparametric estimation of the ATE under unconfoundedness, knowledge of the propensity score does not improve first-order efficiency \citep{jinyong1998prop}. Corollary \ref{clarify} establishes an analogous point here. Once the exposure graph converges in the appropriate sense and has stable large-scale geometry, its randomness affects only lower-order terms, not the leading asymptotic variance.

To conclude, we compare the limiting variances $V^{\HT}_{\L,\ell,\pi}$ and $V^{\Haj}_{\L,\ell,\pi}$. Recalling $\mathcal{R}_i,\mathcal{Q}_i$ from Theorem \ref{Main}, note that
\begin{align*}
\E[(\mathcal{R}_i+\mathcal{Q}_i)^2] = \Var(\mathcal{R}_i+\mathcal{Q}_i) + \left(\E[\mathcal{R}_i+\mathcal{Q}_i]\right)^2.
\end{align*}
As discussed in \cite{Wager} as well, we see that $V^{\HT}_{\L,\ell,\pi}\geq V^{\Haj}_{\L,\ell,\pi}$ if and only if 
$$ \left(\E[\mathcal{R}_i+\mathcal{Q}_i]\right)^2\geq \left(\E[\mathcal{Q}_i]\right)^2.$$
Under no-interference, we have $\mathcal{Q}_i=0$, recovering the fact that the H\'{a}jek estimator has smaller limiting variance. Under interference, however, we see that it is possible for the HT estimator to have smaller limiting variance, namely when $ \left(\E[\mathcal{R}_i+\mathcal{Q}_i]\right)^2< \left(\E[\mathcal{Q}_i]\right)^2.$

\section{Numerical Illustrations}\label{Numeric}

In this section, we use simulations to evaluate the finite-population behavior of the asymptotic approximation in Theorem \ref{Main}. In particular, we show that the normal approximation can remain accurate even when Conditions \ref{Kernelb} and \ref{Degb} are relaxed. Moreover, we compare the design-based and superpopulation approaches illustrated in Corollary \ref{clarify} and Proposition \ref{wager_main}. Concretely, we fix the limit object $(\L,\ell)$ as
\begin{align*}
    \L(u,v)&=1_{u\neq v}\cdot 1_{u+v>1},\\
    \ell(t,w,x)&= t + (1+4t)w + (2+2t)x + 5x^2 + 4wx.
\end{align*}  
Note that Condition \ref{Kernelb} is not satisfied as $\int_0^1 \L(u,v)\,dv = u.$ With $n=1000$, we consider both a dense regime ($\rho_n=1$) and a sparse regime 
$(\rho_n = n^{-0.3})$ under three different settings. In each case, we use $\text{Ber}(0.5)$ assignment and compare the empirical distribution of the estimator to the Gaussian limit predicted by Theorem \ref{Main}.  

The first setting adopts a design-based perspective, where we generate a convergent finite population by Lemmas \ref{det_point} and \ref{thinning}. In other words, the exposure graph is the half graph $H_n$ where $H_n(i,j)=1_{i\neq j}\cdot 1_{i+j>n},$ and the potential outcome functions are $\ell(i/n)$ for each $i\in[n]$. For the sparse setting, each edge of $H_n$ is independently kept with probability $\rho_n$. Fixing the realized finite population, we generate 10,000 treatment assignments to obtain an empirical plot of the HT and H\'{a}jek estimators. This allows us to evaluate Theorem \ref{Main} under a finite population of fixed size.

In the second and third settings, we draw the finite population from $(\L,\ell)$ according to \eqref{random_pop}. The second setting is design-based as we condition on the realized draw, then generate 10,000 treatment assignments. In contrast, the third setting adopts a superpopulation approach. In each of the 10,000 Monte Carlo iterations, we draw a new finite population along with a new treatment assignment. These two settings allow us to compare the normal approximations in Corollary \ref{clarify} and Proposition \ref{wager_main}.

\begin{figure}[t]
    \centering
    \includegraphics[width=1\textwidth]{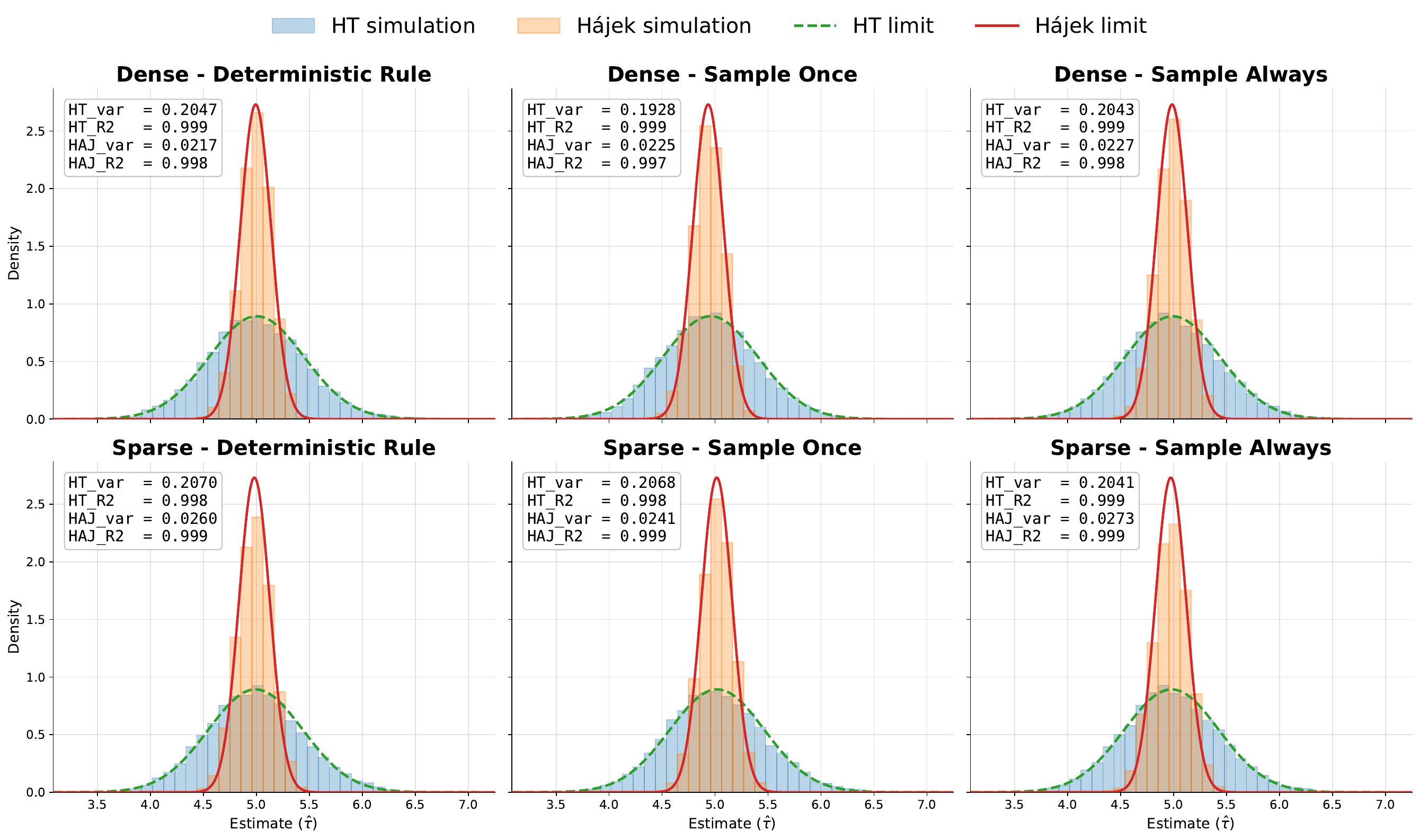}
    \caption{Empirical distributions of the Horvitz--Thompson and H\'{a}jek estimators with $n=1000$. We use 10,000 Monte Carlo iterations. Each plot lists empirical variances. Dividing the true limiting variances by $n$ gives $0.199$ and $0.021$ respectively for the HT and H\'{a}jek estimators.}
    \label{fig:plot}
\end{figure}

We display the results in Figure \ref{fig:plot}. In each plot, we overlay the theoretical limit implied by Theorem \ref{Main}. Namely, we compute the limiting variance and center the corresponding Gaussian curve on the empirical mean. We give a short derivation of the limiting variances in the \hyperref[app]{Appendix}. For each setting, we also include the $R^2$ of the QQ plot against the standard normal distribution to check approximate normality. 

First, the results from the first and second settings show that the normal approximation of Theorem \ref{Main} is reasonable under $n=1000.$ The $R^2$ values are all very close to $1,$ and the empirical variances are close to the limiting variances. In the sparse case, the H\'{a}jek estimator exhibits more visible variance inflation than the HT estimator, which is possibly due to the fact that its limiting variance is much smaller. Next, the shape of the empirical distributions in the second and third settings are similar. This supports our interpretation that first-order uncertainty in estimating the ADE is driven by treatment assignment rather than graph randomness. Finally, as all three settings do not satisfy Conditions \ref{Kernelb} and \ref{Degb}, the results suggest that the asymptotics in Theorem \ref{Main} and Proposition \ref{wager_main} can be robust to degree heterogeneity.

\section{Discussion}

This work provides a notion of convergence for a sequence of finite populations under anonymous interference. As a result, we obtain $\sqrt{n}$-asymptotic normality of the HT and H\'{a}jek estimators for the average direct effect, even on dense, non-random exposure graphs. Moreover, we show that the random-graph model of \cite{Wager} provides an example that generates convergent finite populations. In particular, the first-order uncertainty in their average direct effect estimation is driven by treatment assignment rather than graph randomness. These results show that graph limit theory provides a natural language for expressing asymptotic assumptions in network experiments.

While we focus on asymptotic normality, the interference literature contains several adjacent lines of work. One direction studies optimal designs under interference. For instance, \citet{leung2022spatial,leung2025crosscluster} studies cluster-based designs under spillovers that decay with spatial distance, highlighting a bias-variance tradeoff resulting from the choice of clusters. \citet{viviano2023clustering} study a similar tradeoff in general network interference settings. Another line of work emphasizes randomization-based testing. For instance, \citet{athey2018exact} give a general strategy for testing non-sharp null hypotheses under network interference by passing to an artificial experiment. \citet{basse2024peer} further develop permutation tests for peer effects in random group formation experiments.

We conclude by describing natural directions for future work. First, while we accommodate vanishing edge densities, our asymptotic regime still concerns graphs with growing average degrees. In bounded-degree regimes, the standard notion of graph convergence is Benjamini--Schramm convergence \citep{benjamini2001convg}. Note that asymptotic normality in such settings is often available from dependency-graph CLTs, hence the main issue is not normal approximation itself. Rather, a natural question is whether an appropriate Benjamini--Schramm-type limit can characterize the limiting variance and support variance estimation.

Another natural direction is to extend our results beyond anonymous interference to exposure mappings that depend on neighbor identities. In such settings, the relevant limiting object may need to be richer than an ordinary graphon, for example a decorated graphon. One could then formulate an analogous notion of convergence by requiring that, under a common labeling of units, the enriched network structure and potential outcomes at any common fixed exposure converge jointly. While our proof is based on a transfer argument, one may prove asymptotic normality in such settings by directly utilizing the notion of convergence. This may allow one to weaken some technical conditions as well. For instance, we remark that our Condition \ref{Kernela} is stronger than the $L^2$ condition assumed in \cite{Wager}.

A final direction involves feasible inference. While we establish a central limit theorem for the HT and H\'{a}jek estimators, the limiting variances depend on features of the interference structure and potential outcome functions that are not directly observable. In the random-graph setting, \cite{Fan} develop consistent variance estimators for the (regression-adjusted) Hájek estimator under low-rank assumptions on the underlying graphon. An important open question is to investigate whether their variance estimators can be extended to our design-based setting through our notion of convergence.

\section*{Acknowledgements}
This research was supported by NSF grant SES-2242876. BP acknowledges support from the Stanford Graduate Fellowship. The simulation code was developed using AI assistance. AI tools were also used for limited wording suggestions and sentence-level editing in parts of the manuscript. The authors reviewed all AI-assisted material and are fully responsible for the content of the manuscript.

\bibliographystyle{plainnat} 
\bibliography{references} 

\newpage
\appendix

\section{Finite Population Convergence}\label{app}
\subsection{Convergence under SUTVA}
Here, we recall the setting of Section \ref{SUTVA_sec} and show that convergence according to Definition \ref{fin_convg} implies convergence of the empirical distributions $F_n$.
\begin{lemma}
    Let $\{(a_n,b_n)\}_{n=1}^\infty$ be a sequence of finite populations under SUTVA such that 
    $(a_n,b_n)\to (\ell_0,\ell_1)$ where $\ell_0,\ell_1$ are integrable functions on $[0,1].$ Let $U\sim U[0,1]$ and let $F$ denote the law of $(\ell_0(U),\ell_1(U)).$ Then, we have $F_n\Rightarrow F.$ 
\end{lemma}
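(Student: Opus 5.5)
The plan is to express $F_n$ as the law of a random pair built from the step functions, and then pass $L^1$ convergence to convergence in probability, which gives weak convergence. Let $U\sim U[0,1]$. Since $F_n$ places mass $1/n$ on each pair $(a_n(i),b_n(i))$, and relabeling by a permutation does not change the empirical distribution, $F_n$ is the empirical distribution of $\{(a_n^\varphi(i),b_n^\varphi(i))\}_{i=1}^n$ for the permutations $\varphi_n$ supplied by Definition \ref{fin_convg}. By the step-function identification, $F_n$ is exactly the law of $(a_n^\varphi(U),b_n^\varphi(U))$: the event that $U$ lies in the interior of the $i$th interval has probability $1/n$ and gives value $(a_n^\varphi(i),b_n^\varphi(i))$, and the boundary points are a null set, so the arbitrary values there are irrelevant.

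Next, I would use the assumption $\norm{a_n^\varphi-\ell_0}_1\to 0$ and $\norm{b_n^\varphi-\ell_1}_1\to0$. This says $\E|a_n^\varphi(U)-\ell_0(U)|\to 0$ and $\E|b_n^\varphi(U)-\ell_1(U)|\to 0$. Markov's inequality then gives convergence in probability of the random vector $(a_n^\varphi(U),b_n^\varphi(U))$ to $(\ell_0(U),\ell_1(U))$ on the common probability space of $U$.

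Finally, convergence in probability implies convergence in distribution, so the law of $(a_n^\varphi(U),b_n^\varphi(U))$, which is $F_n$, converges weakly to the law of $(\ell_0(U),\ell_1(U))$, which is $F$. Alternatively, one can argue via bounded Lipschitz test functions $h$: $|\E h(X_n)-\E h(X)|\le \mathrm{Lip}(h)\,\E\|X_n-X\|_1\to0$, and then apply the portmanteau theorem. There is no real obstacle here. The only point needing care is the first step, namely checking that the permutation-invariance of $F_n$ and the null-set ambiguity at the grid points do not affect the identification of $F_n$ with the law of the step functions evaluated at $U$.
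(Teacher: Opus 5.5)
Your proposal is correct and follows the paper's proof essentially step for step. You identify $F_n$ with the law of $(a_n^\varphi(U),b_n^\varphi(U))$, use the $L^1$ assumptions to get $L^1$ (hence in-probability) convergence of this pair to $(\ell_0(U),\ell_1(U))$, and conclude $F_n\Rightarrow F$; your added care about permutation invariance and the null set of grid points just spells out what the paper states in one line.
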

\begin{proof}
    Since $(a_n,b_n)\to (\ell_0,\ell_1),$ there is a sequence of permutations $\varphi_n\in S_n$ so that $\norm{a_n^\varphi-\ell_0}_1\to 0$ and $\norm{b_n^\varphi-\ell_1}_1\to 0.$ Note that $F_n$ is the law of $(a_n(U),b_n(U))$ and also $(a_n^\varphi(U),b_n^\varphi(U))$. Moreover,
    \begin{align*}
        \E\norm{(a_n^\varphi(U),b_n^\varphi(U))-(\ell_0(U),\ell_1(U))}_1 &= \E|a_n^\varphi(U)-\ell_0(U)| + \E|b_n^\varphi(U)-\ell_1(U)|\\
        &= \norm{a_n^\varphi-\ell_0}_1 + \norm{b_n^\varphi-\ell_1}_1\\
        &= o(1).
    \end{align*}
    Thus, we see that $(a_n^\varphi(U),b_n^\varphi(U))\to (\ell_0(U),\ell_1(U))$ in $L^1$ and also in probability. In particular, $F_n\Rightarrow F$ as desired.
\end{proof}

\subsection{Convergence under Anonymous Interference}

Here, we prove results in Section \ref{interpret_conv}.

\begin{proof}[Proof of Lemma \ref{random_order}]
    We begin with the first part. First assume that $\rho_n=1$ (dense case) and $\L$ is a graphon. Let $H_n$ denote the weighted adjacency matrix with $(i,j)$th entry $\L(U_i,U_j).$ Given $H_n$, we see that $L_n$ is the random graph generated by independently assigning an edge to $\{i,j\}$ with probability $H_n(i,j).$ By the triangle inequality, we have
        \begin{align*}
            \norm{L_n^\sigma - \L}_\square \leq \norm{L_n^\sigma - H_n^\sigma}_\square + \norm{H_n^\sigma - \L}_\square.
        \end{align*}
    The first term of the RHS goes to zero almost surely by Lemma 4.3 of \citet{graphon} along with the Borel--Cantelli lemma. The second term of the RHS goes to zero almost surely by Theorem 2.14(a) of \citet{key}. Note that \citet{key} use the term ``graphon" to include our notion of kernels. For the sparse case where $\rho_n\to 0$ and $n\rho_n\to\infty,$ the result $\norm{\rho_n^{-1}L_n^\sigma - \L}_\square\to 0$ is directly given as Theorem 2.14(b) of \citet{key}.

    Next, we wish to show that almost surely, $\norm{\ell_n^\sigma(\cdot,w,x) - \ell(\cdot,w,x)}_{1}\to 0$
    for any $(w,x)\in \{0,1\}\times [0,1].$ First, fix any $(w,x).$ Since $\ell(t)\in\F$, we see that the map $t\mapsto \ell(t,w,x)$ is bounded on $[0,1].$ Hence, for any $\varepsilon>0,$ there exists continuous $g$ on $[0,1]$ so that $\norm{g-\ell(\cdot,w,x)}_1 < \varepsilon.$ Now consider $g_n=(g(U_1),\dots, g(U_n))$. Then, we have 
    \begin{align*}
        \norm{\ell_n^\sigma(\cdot,w,x) - \ell(\cdot,w,x)}_{1}\leq \norm{\ell_n^\sigma(\cdot,w,x) - g_n^\sigma}_1 + \norm{g_n^\sigma - g}_1 + \norm{g-\ell(\cdot,w,x)}_1.
    \end{align*}
    The first term on the RHS converges to $\norm{g-\ell(\cdot,w,x)}_1$ by the strong law of large numbers. Moreover, note that $g$ is uniformly continuous on $[0,1].$ Writing the second term on the RHS as a sum of integrals over the intervals of length $1/n$, we see it is bounded above by the modulus of continuity $\omega_g\left(1/n + \max_{i\in [n]}\left|U_{(i)}-i/n\right|\right).$ By the Glivenko--Cantelli theorem, we know that $\max_{i\in [n]}\left|U_{(i)}-i/n\right|\limas 0.$ Hence, we get $\norm{g_n^\sigma - g}_1\limas 0$. Finally taking $\varepsilon\to 0,$ we conclude that $\norm{\ell_n^\sigma(\cdot,w,x) - \ell(\cdot,w,x)}_{1}\to 0$ almost surely. 
    
    By countability, we almost surely have $\norm{\ell_n^\sigma(\cdot,w,x) - \ell(\cdot,w,x)}_{1}\to 0$ for all $(w,x)$ where $x$ is rational. To conclude, fix any $(w,x)$ where $x\in[0,1]\setminus\mathbb{Q}.$ As we assume that elements of $\F$ have  derivatives bounded by $C,$ we see that elements of $\F$ are $C$-Lipschitz in the exposure argument. Thus, we get
    \begin{align*}
        |\ell_n^\sigma(t,w,p)-\ell_n^\sigma(t,w,x)| &\leq C|p-x|\\
        |\ell(t,w,p)-\ell(t,w,x)| &\leq C|p-x|
    \end{align*}
    for any $t\in [0,1]$ and $p\in\Q\cap[0,1]$. Since the rationals are dense in $[0,1],$ we can extend the almost sure convergence to all $x\in [0,1]$. This concludes the proof.
    \end{proof}

\begin{proof}[Proof of Lemma \ref{det_point}]
   We begin with the graphon. Since $\L$ is Riemann integrable, we see that it is continuous almost everywhere on $[0,1]^2.$ Next, let $(u,v)\in [0,1]^2$ be any point such that $\L$ is continuous at $(u,v)$ and both $u,v$ are irrational. Moreover, define $i_n,j_n\in [n]$ such that 
   \begin{align*}
       \frac{i_n-1}{n} < u < \frac{i_n}{n},\qquad \frac{j_n-1}{n} < v < \frac{j_n}{n}.
   \end{align*}
   Rewriting the inequalities as 
   \begin{align*}
       u < \frac{i_n}{n} < u + \frac{1}{n},\qquad v < \frac{j_n}{n} < v + \frac{1}{n},
   \end{align*}
   we see that $i_n/n\to u$ and $j_n/n\to v$ as $n\to\infty.$ To conclude, we have
   \begin{align*}
       G_n(u,v) = \L\left(\frac{i_n}{n},\frac{j_n}{n}\right)\to \L(u,v)
   \end{align*}
   for almost every $(u,v)\in [0,1]^2.$ By the dominated convergence theorem, we have   
   \begin{align*}
       \norm{G_n-\L}_1\to 0
   \end{align*}
   as desired. The proof for each $\ell(\cdot,w,x)$ is identical to the above argument.
\end{proof}

\begin{proof}[Proof of Lemma \ref{thinning}]
    By the triangle inequality, we can write 
    \begin{align*}
         \norm{\rho_n^{-1}G_n - \L}_\square\leq  \norm{\rho_n^{-1}G_n - A_n}_\square +  \norm{A_n - \L}_\square.
    \end{align*}
    We have $\norm{\rho_n^{-1}G_n - A_n}_\square\to 0$ by Lemma 7.3 of \cite{key}. Since $\norm{A_n-\L}_\square\to 0$ by assumption, we conclude the proof.
\end{proof}

\begin{proof}[Proof of Lemma \ref{perturb}]
    Beginning with the triangle inequality, we have
    \begin{align*}
        \norm{\rho_n^{-1}B_n - \L}_{\square}&\leq \norm{\rho_n^{-1}B_n - \rho_n^{-1}A_n}_{\square} + \norm{\rho_n^{-1}A_n - \L}_{\square}\\
        &\leq \rho_n^{-1}\norm{A_n-B_n}_\square  + o(1)\\
        &\leq \rho_n^{-1}\norm{A_n-B_n}_1 + o(1).
    \end{align*}
    Noting $\norm{A_n-B_n}_1$ equals $n^{-2}$ times twice the number of edges on which $A_n$ and $B_n$ differ, we conclude that $\norm{A_n-B_n}_1=n^{-2}\cdot o(n^2\rho_n) = o(\rho_n)$ and conclude the proof.
\end{proof}

\section{Technical Lemmas for Degrees}
    Here, we give useful lemmas regarding the degrees of $G_n$ and $L_n$ that will be used in the sections below. Given a graph $A_n,$ we will write $d_i(A)$ instead of $d_i(A_n)$ for the sake of brevity. First, we give an interpretation of Conditions \ref{Scale} and \ref{Deg}.
    \begin{lemma}\label{interpret}
        Recall that $d_i(G)= \deg_{G_n}(i)\lor 1.$ Then, under Conditions \ref{Scale} and \ref{Deg}, there are constants $\alpha>-1$ and $c', C'>0$ so that for all large enough $n,$ we have
        \begin{enumerate}[label=(\alph*)]
            \item $\rho_n\geq n^\alpha,$
            \item $\min\limits_{1\leq i\leq n}d_i(G)\geq c'\cdot  n\rho_n,$
            \item $\sum_{i=1}^n d_i(G)^4\leq C'n^5\rho_n^4,$
            \item $\sum_{i=1}^n d_i(G)^2\leq \sqrt{C'}n^3\rho_n^2,$
            \item $\sum_{i=1}^n d_i(G) \leq (C')^{1/4}n^2\rho_n.$
        \end{enumerate}
    \end{lemma}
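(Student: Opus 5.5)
Each of the five items follows directly from Conditions \ref{Scale} and \ref{Deg} and the definition $d_i(G)=\deg_{G_n}(i)\lor 1$; nothing beyond elementary inequalities is needed.

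\emph{Part (a).} By Condition \ref{Scaleb}, $\beta:=\liminf_{n}\log\rho_n/\log n>-1$. Choose $\alpha\in(-1,\beta)$ (for instance $\alpha=(\beta-1)/2$, with the obvious adjustment if $\beta$ is large). Then for all large $n$ we have $\log\rho_n/\log n\geq\alpha$. Since $\log n>0$, this rearranges to $\log\rho_n\geq\alpha\log n$, that is, $\rho_n\geq n^\alpha$. In the dense case $\rho_n=1$ we have $\beta=0$, and the same choice of $\alpha$ works.

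\emph{Parts (b) and (c).} These are Condition \ref{Deg} with the maximum with $1$ accounted for.
\begin{itemize}
\item For (b): Condition \ref{Degb} gives some $c'>0$ with $\min_i\deg_{G_n}(i)\geq c'n\rho_n$ for large $n$. Since $d_i(G)\geq\deg_{G_n}(i)$, (b) follows. Moreover, $n\rho_n\to\infty$ in both regimes, so for large $n$ every degree is at least $1$. Hence $d_i(G)=\deg_{G_n}(i)$ for all $i$, and the $\lor 1$ plays no further role.
\item For (c): with this identification, Condition \ref{Dega} gives a constant $C'$ such that $\sum_i d_i(G)^4\leq C'n^5\rho_n^4$ for large $n$.
\end{itemize}

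\emph{Parts (d) and (e).} Both follow from (c) by Cauchy--Schwarz, or equivalently by Jensen's inequality for power means. For (d),
\[
\sum_i d_i^2\leq\sqrt{n}\Big(\sum_i d_i^4\Big)^{1/2}\leq\sqrt{n}\sqrt{C'}\,n^{5/2}\rho_n^2=\sqrt{C'}\,n^3\rho_n^2.
\]
Applying Cauchy--Schwarz once more and then (d),
\[
\sum_i d_i\leq\sqrt{n}\Big(\sum_i d_i^2\Big)^{1/2}\leq\sqrt{n}\,(C')^{1/4}n^{3/2}\rho_n=(C')^{1/4}n^2\rho_n,
\]
which is (e).

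The only point needing care is the choice of $\alpha$ in (a) from the $\liminf$, together with the observation that $d_i(G)$ agrees with the raw degree once $n$ is large. There is no real obstacle.
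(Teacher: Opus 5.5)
Your proof is correct and follows the paper's argument: read (a)--(c) off Conditions \ref{Scale} and \ref{Deg}, using $n\rho_n\to\infty$ to get $d_i(G)=\deg_{G_n}(i)$ for large $n$, then derive (d) and (e) by Cauchy--Schwarz. The caveat ``with the obvious adjustment if $\beta$ is large'' is unnecessary, because $\rho_n\le 1$ forces $\beta\le 0$, so $\alpha=(\beta-1)/2$ always lies in $(-1,\beta)$.
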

    \begin{proof}
        These are simple interpretations of the conditions in Theorem \ref{Main} since for large enough $n$, we have $d_i(G)=\deg_{G_n}(i)$. The final three inequalities hold by repeatedly applying Cauchy--Schwarz.
    \end{proof}

    Next, we list some consequences of Condition \ref{Kernel}. The first result regards the minimum degree of $L_n.$
    \begin{lemma}\label{rare}
        Recall that $d_i(L)=\deg_{L_n}(i)\lor 1.$ Let $E_n$ denote the event $$E_n=\left\{\min\limits_{1\leq i\leq n}d_i(L)\geq c/2\cdot n\rho_n\right\}$$ where $c$ is the constant described in Condition \ref{Kernel}. Then, under Condition \ref{Kernel}, $$\P[E_n']\leq n\exp\left(-Kn\rho_n\right)$$ where $K>0$ is a constant. 
    \end{lemma}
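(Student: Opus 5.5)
We bound $\P[E_n']$ with a union bound over the $n$ vertices, plus a Chernoff bound for each vertex's degree conditional on its latent position. Fix $i\in[n]$ and condition on $U_i$. Given $U_i$, the indicators $\{1_{i\sim j}\}_{j\neq i}$ are i.i.d. Bernoulli, because the $U_j$ are i.i.d. and each edge is drawn independently given the latents. The success probability is
\[
p_n(U_i)=\int_0^1 \min\{1,\rho_n\L(U_i,v)\}\,dv .
\]
Since $\rho_n\in(0,1]$, we have $\min\{1,\rho_n x\}\ge \rho_n\min\{1,x\}$ for every $x\ge 0$. Combined with Condition \ref{Kernelb}, this gives
\[
p_n(U_i)\ge \rho_n\int_0^1\min\{1,\L(U_i,v)\}\,dv\ge c\rho_n .
\]
Hence, conditionally on $U_i$, $\deg_{L_n}(i)\sim\mathrm{Bin}(n-1,p_n(U_i))$ has mean $\mu\ge c(n-1)\rho_n$.

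Next we apply the multiplicative Chernoff lower-tail bound $\P[X\le(1-\delta)\mu]\le\exp(-\delta^2\mu/2)$. Since $d_i(L)\ge\deg_{L_n}(i)$, it suffices to control the event $\deg_{L_n}(i)< (c/2)n\rho_n$. This threshold is at most $(1-\delta)c(n-1)\rho_n$ with, say, $\delta=1/3$ once $n\ge 4$, because $(c/2)n\le (2/3)c(n-1)$ for such $n$. The tail bound is monotone in $\mu$, so it can be applied with the lower bound on $\mu$ in place of $\mu$; alternatively, one can stochastically dominate from below by $\mathrm{Bin}(n-1,c\rho_n)$. This gives
\[
\P\big[\deg_{L_n}(i)<(c/2)n\rho_n\mid U_i\big]\le\exp\big(-c(n-1)\rho_n/18\big)\le \exp(-Kn\rho_n)
\]
with, for example, $K=c/36$ for $n\ge2$.

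Integrating over $U_i$ and taking a union bound over $i\in[n]$ yields $\P[E_n']\le n\exp(-Kn\rho_n)$. Small values of $n$ are absorbed by shrinking $K$ if needed, since $\rho_n\le1$ and the right-hand side exceeds $1$ whenever $Kn\le\log n$.

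The main point requiring care is the reduction to a binomial. The degrees are dependent across vertices, but for a fixed $i$ the conditional i.i.d. structure given $U_i$ holds exactly, and the union bound needs no independence across vertices. The other subtle step is the inequality $\min\{1,\rho_n x\}\ge\rho_n\min\{1,x\}$, which handles the truncation in the sparse kernel case. This inequality is where Condition \ref{Kernelb}, stated with $\min\{1,\L\}$, is exactly what is needed.
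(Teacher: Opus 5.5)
Your proof is correct and is essentially the paper's approach. The paper notes $d_i(L)\ge \deg_{L_n}(i)$ and then simply cites Lemma 15 of \citet{Wager}. You instead prove that step directly, and your argument is the same one the paper itself runs in its proof of Lemma~\ref{concentration_deg}: the conditional binomial given $U_i$, the bound $\int_0^1\min\{1,\rho_n\L(u,v)\}\,dv\ge c\rho_n$, a Chernoff bound, and a union bound.

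One minor point on small $n$: your remark that the right-hand side exceeds $1$ when $Kn\le\log n$ does not cover $n=1$. That case is trivial anyway, since $d_1(L)=1\ge (c/2)\rho_1$ (because $c\le 1$), so $\P[E_1']=0$.
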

    \begin{proof}
    Note that $d_i(L)\geq \deg_{L_n}(i).$ The result follows from Lemma 15 of \citet{Wager}.
    \end{proof}
    The second result involves empirical moments of the degrees.
    \begin{lemma}\label{exp}
        Under Conditions \ref{Scale}--\ref{Kernel}, there are constants $D_k>0$ (depending on $\L$) where $k\in \{1,2,3,4\}$ so that
        \begin{align*}
            \E\left[\sum_{i=1}^n d_i(L)^k\right]&\leq D_k\cdot n^{k+1}\rho_n^k.
        \end{align*}
    \end{lemma}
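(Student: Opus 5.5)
We need to bound $\E[\sum_i d_i(L)^k]\le D_k n^{k+1}\rho_n^k$ for $k\in\{1,2,3,4\}$. By exchangeability of the units, it suffices to show $\E[d_1(L)^k]\le D_k (n\rho_n)^k$, where $d_1(L)=\deg_{L_n}(1)\vee 1$. Since $d_1(L)^k\le 1+\deg_{L_n}(1)^k$ and $n\rho_n\to\infty$ by Condition \ref{Scalea}, the additive $1$ is absorbed into the constant for large $n$; for the finitely many small $n$ we can simply enlarge $D_k$, using the trivial bound $d_1\le n$. So the plan is to bound the $k$th moment of $\deg_{L_n}(1)=\sum_{j\ge 2}\xi_{1j}$. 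Conditionally on $U_1,\dots,U_n$, the $\xi_{1j}$ are independent Bernoulli variables with means $p_j=\min\{1,\rho_n\L(U_1,U_j)\}\le \rho_n\L(U_1,U_j)$.

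Condition on $U_1=u$, so that $U_2,\dots,U_n$ are i.i.d.\ uniform and the $\xi_{1j}$ are i.i.d.\ Bernoulli with mean $q(u)=\E[p_j\mid U_1=u]\le \rho_n\L_1(u)$, where $\L_1(u)=\int_0^1\L(u,v)\,dv$. Expand $\E[(\sum_j\xi_{1j})^k\mid U_1=u]$ over ordered tuples of indices. Using $\xi^m=\xi$, a tuple whose indices take $r$ distinct values contributes $q(u)^r$, and there are at most $c_k n^r$ such tuples with $r\le k$. This gives
\begin{align*}
\E[\deg_{L_n}(1)^k\mid U_1=u]\le c_k\sum_{r=1}^k (n\rho_n\L_1(u))^r .
\end{align*}
Integrating over $u$ yields $\E[\deg_{L_n}(1)^k]\le c_k\sum_{r=1}^k (n\rho_n)^r\E[\L_1(U)^r]$. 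By Jensen's inequality, $\L_1(u)^r\le\int\L(u,v)^r\,dv$, so $\E[\L_1(U)^r]\le\norm{\L}_r^r\le \norm{\L}_4^r<\infty$ for $r\le 4$ by Condition \ref{Kernela}. Since $n\rho_n\to\infty$, the top term $(n\rho_n)^k$ dominates the lower-order terms, giving $\E[\deg_{L_n}(1)^k]\le D_k (n\rho_n)^k$ with $D_k$ depending on $\L$ through $\norm{\L}_4$.

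The main obstacle is the moment step. The truncation at $1$ in $\min\{1,\rho_n\L\}$ only helps, and the $L^4$ assumption is exactly what is needed for $k=4$. The one point needing care is handling the expansion conditionally on $U_1$ rather than on the full vector $U$. Conditioning on all of $U$ would produce $\E[(\sum_j\rho_n\L(U_1,U_j))^k]$, which brings in cross terms like $\L(U_1,U_2)\L(U_1,U_3)$. These are still fine, since conditional independence given $U_1$ handles them as above. Also, the "dense case" $\rho_n=1$ with a graphon $\L$ is covered by the same argument.
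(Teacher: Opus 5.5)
Your proof is correct and has the same structure as the paper's. Both bound $\E[\deg_{L_n}(i)^k]$ by a constant times $(n\rho_n)^k$, absorb the truncation $d_i(L)=\deg_{L_n}(i)\vee 1$ using $n\rho_n\to\infty$, and sum over $i$. The paper cites Lemma 15 of Li and Wager for the degree-moment bound, whereas you derive it directly: you condition on $U_1$, expand the $k$th moment of the conditionally i.i.d.\ Bernoulli sum, and apply Jensen with the $L^4$ condition on $\L$. This makes your argument self-contained and shows that $D_k$ depends on $\L$ only through $\norm{\L}_4$.
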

    \begin{proof}
        By Lemma 15 of \citet{Wager}, we know that there are constants $V_1,\dots, V_4>0$ (depending on $\L$) so that
        \begin{align*}
            \E[\deg_{L_n}(i)]^k\leq V_k (n\rho_n)^k.
        \end{align*}
        Next, we have $d_i(L)\leq \deg_{L_n}(i)+1.$ Hence, we have
        \begin{align*}
            \E\left[d_i(L)^k\right]\leq \E\left[(\deg_{L_n}(i)+1)^k\right]\leq \sum_{\ell=0}^k \binom{k}{\ell}V_\ell\cdot (n\rho_n)^\ell
        \end{align*}
        where $V_0=1.$ This shows that $\E[d_i(L)^k]=O(n^k\rho_n^k)$ for each $k\in\{1,2,3,4\}$ (by the same Big-O constant over $i\in [n]$). Summing over $i\in [n]$ gives our desired result.
    \end{proof}

Finally, we show that the degrees of $L_n$ also satisfy Condition \ref{Deg}.

\begin{lemma}\label{concentration_deg}
    Under Conditions \ref{Scale}--\ref{Kernel}, the sequence $\{L_n\}_{n=1}^\infty$ almost surely satisfies Condition \ref{Deg}.
\end{lemma}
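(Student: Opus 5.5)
Condition \ref{Deg} has two parts, and I would verify each almost surely using the two preceding lemmas together with the Borel--Cantelli lemma. Here $d_i$ in Condition \ref{Deg} is $\deg_{L_n}(i)$, and $d_i(L)=\deg_{L_n}(i)\vee 1$ differs from it only when the degree is zero.

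\emph{Part (b), the minimum degree.} By Lemma \ref{rare}, $\P[E_n']\leq n\exp(-Kn\rho_n)$. By Lemma \ref{interpret}(a), which uses only Condition \ref{Scale}, we have $\rho_n\geq n^{\alpha}$ with $\alpha>-1$ for large $n$. Hence $n\rho_n\geq n^{1+\alpha}$, a positive power of $n$. In the dense case $n\rho_n=n$, so the same holds trivially. Therefore
\begin{align*}
\sum_n \P[E_n'] \leq \sum_n n\exp\bigl(-Kn^{1+\alpha}\bigr)<\infty.
\end{align*}
By Borel--Cantelli, $E_n$ holds for all large $n$ almost surely. On $E_n$ with $n\rho_n\to\infty$, every degree satisfies $\deg_{L_n}(i)\geq (c/2)n\rho_n\geq 1$, so $d_i(L)=\deg_{L_n}(i)$ and $\min_i \deg_{L_n}(i)=\Omega(n\rho_n)$.

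\emph{Part (a), the fourth moment of degrees.} Markov's inequality with Lemma \ref{exp} only gives $\P[\sum_i d_i(L)^4> Mn^5\rho_n^4]\leq D_4/M$. This is not summable, so a first-moment argument alone will not suffice. This is the main obstacle. I would pass to a concentration bound for individual degrees instead.

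Conditionally on $U_i$, the degree $\deg_{L_n}(i)$ is a sum of $n-1$ independent Bernoulli variables. Its conditional mean is $\mu_i=\rho_n\sum_{j\neq i}\E[\min\{1,\rho_n^{-1}\L(U_i,U_j)\}\mid U_i]\leq n\rho_n\,\bar{\L}(U_i)$, where $\bar{\L}(u)=\int_0^1\L(u,v)\,dv$. I would then split each degree as
\begin{align*}
\deg_{L_n}(i)\leq 2n\rho_n\bar{\L}(U_i)+ 2n\rho_n + X_i,
\end{align*}
where $X_i$ is the excess above $\max\{2\mu_i,2n\rho_n\}$. A Chernoff bound gives $\P[X_i>0]\leq \exp(-K'n\rho_n)$. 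Since $X_i\leq n$, a union bound over $i$ gives $\P[\max_i X_i>0]\leq n\exp(-K'n^{1+\alpha})$, which is summable. So almost surely, for all large $n$,
\begin{align*}
\sum_i \deg_{L_n}(i)^4\leq C(n\rho_n)^4\sum_i\bigl(\bar{\L}(U_i)^4+1\bigr).
\end{align*}
Condition \ref{Kernela} and Jensen's inequality give $\E[\bar{\L}(U)^4]\leq\norm{\L}_4^4<\infty$. The strong law of large numbers then gives $n^{-1}\sum_i\bar{\L}(U_i)^4\to\E[\bar{\L}(U)^4]$ almost surely. This yields $\sum_i\deg_{L_n}(i)^4=O(n^5\rho_n^4)$ almost surely.

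There is one point I would check carefully. The SLLN is applied along the single i.i.d. sequence $U_1,U_2,\dots$ shared across $n$, as in \eqref{random_pop}, so it applies directly. The Chernoff step conditions on the $U$'s and uses the independence of the edge indicators given the $U$'s.

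Intersecting the two almost sure events completes the proof.
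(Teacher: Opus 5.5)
Your proof is correct and is essentially the paper's argument. Both use a Chernoff bound for each degree given $U_i$, a union bound made summable by $n\rho_n\geq n^{1+\alpha}$, and Borel--Cantelli, followed by $\mu_i\leq n\rho_n\bar{\L}(U_i)$ with Jensen and the SLLN for $\frac{1}{n}\sum_i\bar{\L}(U_i)^4$. The paper packages both parts into one two-sided bound $\mu_i/2\leq d_i\leq 3\mu_i/2$, using Condition \ref{Kernelb} to get $\mu_i\gtrsim n\rho_n$; you instead cite Lemma \ref{rare} for the minimum degree and put the floor $2n\rho_n$ in the upper tail.

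One slip to fix: the edge probability is $\min\{1,\rho_n\L(U_i,U_j)\}$, so $\mu_i=\sum_{j\neq i}\E[\min\{1,\rho_n\L(U_i,U_j)\}\mid U_i]$, not the expression you wrote. Your bound $\mu_i\leq n\rho_n\bar{\L}(U_i)$ does hold for this correct $\mu_i$.
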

\begin{proof}
Let $d_i$ denote the degree of $i$ in $L_n.$ The key is to note that
    $$d_i\mid U_i\sim \text{Bin}(n-1,g_n(U_i))$$
    where $g_n(u)=\int_0^1\min\{1,\rho_n\L(u,v)\}\,dv$. 
    By Condition \ref{Kernelb}, we have
    \begin{align*}
        g_n(u)\geq \int_0^1 \rho_n\min\{1,\L(u,v)\}\,dv\geq c\rho_n.
    \end{align*}
    Similarly, we have
    \begin{align*}
        g_n(u)\leq \rho_n\int_0^1\L(u,v)\,dv = \rho_n\lambda(u)
    \end{align*}
    where $\lambda(u)=\int_0^1\L(u,v)\,dv.$
    
    Letting $\mu_i = (n-1)g_n(U_i)\geq (n-1)c\rho_n,$ a Chernoff bound gives
    \begin{align*}
        \P(|d_i-\mu_i| > \mu_i/2\mid U_i)\leq 2\exp(-C\mu_i)\leq 2\exp(-C'n\rho_n)
    \end{align*}
    for some constants $C$ and $C'$. Taking unconditional probabilities and applying the union bound, we see that
    \begin{align*}
        \P(\exists i: |d_i-\mu_i|>\mu_i/2)\leq 2n\exp(-C'n\rho_n)\leq \frac{2n}{e^{C'n^{1+\alpha}}}
    \end{align*}
    for some $\alpha>-1$ by Condition \ref{Scaleb}. As the rightmost side is summable over $n,$ by Borel--Cantelli, we almost surely have
    \begin{align*}
       \frac{\mu_i}{2}\leq d_i\leq \frac{3}{2}\mu_i  \qquad \forall i\in [n]
    \end{align*}
    for all large enough $n.$ Since $\mu_i\geq (n-1)c\rho_n,$ we see that Condition \ref{Degb} is satisfied almost surely.

    To conclude, note that
    \begin{align*}
        \sum_{i=1}^n d_i^4&\leq \left(\frac{3}{2}\right)^4\sum_{i=1}^n(n-1)^4g_n(U_i)^4\leq C''n^5\rho_n^4\cdot \frac{1}{n}\sum_{i=1}^n \lambda(U_i)^4
    \end{align*}
    for some constant $C''.$ Since 
    \begin{align*}
        \E[\lambda(U_1)^4] = \int_0^1\lambda(u)^4\,du \leq \int_0^1\int_0^1\L(u,v)^4\,dv\,du < \infty
    \end{align*}
    by Condition \ref{Kernela}, the strong law of large numbers gives 
    \begin{align*}
        \frac{1}{n}\sum_{i=1}^n \lambda(U_i)^4 \limas \E[\lambda(U_1)^4].
    \end{align*}
    In particular, we see that Condition \ref{Dega} is also almost surely satisfied and conclude the proof.
\end{proof}

\section{Linearization of the HT and H\'{a}jek Estimators}\label{linearization_app}

In this section, we show how to linearize the HT and H\'{a}jek estimators. We first prove Lemma \ref{move}.

\begin{proof}[Proof of Lemma \ref{move}]
We write $\widehat\tau_n = \widehat\tau_n^{\HT}.$ First, Lemma 2 of \citet{Wager} (with different notation) states that conditional on any finite population $(A_n,u_n)$, we have
    \begin{align*}
    \hat\tau_n(A_n,u_n)-\overline{\tau}_n(A_n,u_n) = 
        \frac{1}{n}\sum_{i=1}^n \psi_i(A_n,u_n)(W_i-\pi) + O_p(R(A_n))
    \end{align*}
    where 
    \begin{align*}
        R(A_n) &= C\left(\frac{1}{\sqrt{n \cdot \delta_n}} + \frac{\sqrt{\sum_{i,j}\gamma_{i,j}}}{n\cdot  \delta_n^{3/2}}\right),\\
        \delta_n &= \min\limits_{1\leq i\leq n}\deg_{A_n}(i),\\
        \gamma_{i,j}&= |\{k\in [n]\setminus\{i,j\}: k\sim i, k\sim j\}|.
    \end{align*}
    In particular, this result holds unconditionally as well. Applying this result to each of the coupled graphs $L_n^\sigma$ and $G_n^\varphi,$ it remains to show that $R(L_n^\sigma)=o_p(1/\sqrt{n})$ and $R(G_n^\varphi)=o_p(1/\sqrt{n})$ under Conditions \ref{Scale}--\ref{Deg}. 
    
    We claim that the quantity $R(A_n)$ is invariant under relabeling of vertices in $A_n$. To see this, for any $\eta\in S_n,$ we have
    \begin{align*}
        \sum_{i=1}^n\sum_{j=1}^n|\{k\in [n]\setminus\{i,j\}:k\sim_{A^\eta} i, k\sim_{A^\eta} j\}| &= \sum_{i=1}^n\sum_{j=1}^n|\{k\in [n]\setminus\{i,j\}:\eta(k)\sim_{A} \eta(i), \eta(k)\sim_{A} \eta(j)\}|\\
        &= \sum_{i=1}^n\sum_{j=1}^n|\{w\in [n]\setminus\{\eta(i),\eta(j)\}:w\sim_{A} \eta(i), w\sim_{A} \eta(j)\}|\\
        &= \sum_{u=1}^n\sum_{v=1}^n|\{w\in [n]\setminus\{u,v\}:w\sim_{A} u, w\sim_{A} v\}|
    \end{align*}
    along with 
    \begin{align*}
        \min_{1\leq i\leq n}\deg_{A_n}(i) = \min_{1\leq i\leq n}\deg_{A_n^\eta}(i).
    \end{align*}
    This shows that $\delta_n$ and $\sum_{i,j}\gamma_{i,j}$ are indeed invariant under relabeling. Hence, we may consider $R(L_n)$ and $R(G_n)$ instead of $R(L_n^\sigma)$ and $R(G_n^\varphi).$ Finally, for any graph $A_n,$ note that
    \begin{align}
        \sum_{i,j}\gamma_{i,j} &\leq \sum_{i,j}\sum_{k=1}^n 1_{k\sim i}1_{k\sim j}= \sum_{k=1}^n \sum_{i,j}1_{k\sim i}1_{k\sim j} = \sum_{k=1}^n \deg_{A_n}(k)^2\leq \sum_{k=1}^n d_k(A)^2.\label{eq:useful}
    \end{align}

We first show that $R(L_n)=o_p(1/\sqrt{n}).$ Fix any $\varepsilon>0$ and recall the event $E_n$ from Lemma \ref{rare}. Then, we get
\begin{align*}
    \P\left[R(L_n) > \frac{\varepsilon}{\sqrt{n}}\right]&\leq \P\left[R(L_n) > \frac{\varepsilon}{\sqrt{n}},E_n\right] + \P[E_n']\\
    &\leq \P\left[R(L_n)1_{E_n} > \frac{\varepsilon}{\sqrt{n}}\right] + n\exp(-Kn\rho_n).
\end{align*}
By Markov's inequality along with Lemma \ref{interpret}, we further get
\begin{align*}
    \P\left[R(L_n) > \frac{\varepsilon}{\sqrt{n}}\right]
    &\leq \frac{\sqrt{n}}{\varepsilon}\cdot\E\left[R(L_n)1_{E_n}\right] + \frac{n}{e^{Kn^{1+\alpha}}}
\end{align*}
for large enough $n.$ Since $1+\alpha>0,$ the second term above is $o(1).$ To conclude, using \eqref{eq:useful} along with Lemma \ref{exp}, we get
\begin{align*}
    \E[R(L_n)1_{E_n}]&\leq C\left(\frac{1}{\sqrt{c/2\cdot n^2\rho_n}} + \frac{\E\sqrt{\sum_{k=1}^n d_k(L)^2}}{n\cdot (c/2 \cdot n \rho_n)^{3/2}}\right)\\
    &\leq C\left(\frac{1}{\sqrt{c/2\cdot n^2\rho_n}} + \frac{\sqrt{D_2\cdot n^3\rho_n^2}}{n\cdot (c/2 \cdot n \rho_n)^{3/2}}\right)
\end{align*}
where the second inequality is by Jensen's inequality. Simplifying the expression above, since $n\rho_n\to\infty,$ we see that $\E[R(L_n)1_{E_n}]=o(1/\sqrt{n}).$ Therefore, we conclude that $\P[R(L_n)>\varepsilon/\sqrt{n}]=o(1)$ as desired.

Next, we show that $R(G_n)=o(1/\sqrt{n}).$ This follows directly from \eqref{eq:useful} along with Lemma \ref{interpret}. Indeed, for large enough $n,$ we get
\begin{align*}
    R(G_n) &\leq C\left(\frac{1}{\sqrt{c'\cdot n^2\rho_n}} + \frac{\sqrt{\sum_{k=1}^n d_k(G)^2}}{n\cdot (c' \cdot n \rho_n)^{3/2}}\right)\\
    &\leq C\left(\frac{1}{\sqrt{c'\cdot n^2\rho_n}} + \frac{\sqrt{\sqrt{C'}n^3\rho_n^2}}{n\cdot (c' \cdot n \rho_n)^{3/2}}\right) = o(1/\sqrt{n}),
\end{align*}
again since $n\rho_n\to\infty.$ This concludes the proof.
\end{proof}

Next, we prove an analogous linearization for the H\'{a}jek estimator.

\begin{lemma}\label{move_Haj}
For any finite population $(A_n,u_n),$ write
    \begin{align*}
        &\nu_i(A_n,u_n) \\
        &= \frac{u_n(i,1,\pi)}{\pi} + \frac{u_n(i,0,\pi)}{1-\pi} - \frac{1}{n}\sum_{j=1}^n \left(\frac{u_n(j,1,\pi)}{\pi} + \frac{u_n(j,0,\pi)}{1-\pi}\right) + \sum_{j=1}^n \frac{A_n(i,j)}{d_j(A_n)}\big(u_n'(j,1,\pi) - u_n'(j,0,\pi)\big)
    \end{align*}
    where $u_n(i,1,\pi)=u_n(i)(1,\pi)$ and the derivatives are with respect to the third argument. Then, Conditions \ref{Scale}--\ref{Deg} give
    \begin{align*}
        \hat\tau_n^{\Haj}(G_n^\varphi, v_n^\varphi) - \overline\tau_n(G_n^\varphi, v_n^\varphi) &= \frac{1}{n}\sum_{i=1}^n \nu_i(G_n^\varphi,v_n^\varphi)(W_i - \pi) + o_p(1/\sqrt{n}),\\
        \hat\tau_n^{\Haj}(L_n^\sigma, \ell_n^\sigma) - \overline\tau_n(L_n^\sigma,\ell_n^\sigma) &= \frac{1}{n}\sum_{i=1}^n \nu_i(L_n^\sigma,\ell_n^\sigma)(W_i - \pi) + o_p(1/\sqrt{n}).
    \end{align*}
\end{lemma}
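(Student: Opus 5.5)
The plan is to reduce the H\'{a}jek linearization to the HT linearization of Lemma \ref{move} via a delta-method argument. Write $\hat\tau^{\Haj}_n = \hat\mu_1 - \hat\mu_0$, where
\[
\hat\mu_1 = \frac{n^{-1}\sum_i W_iY_i}{n^{-1}\sum_i W_i}, \qquad \hat\mu_0 = \frac{n^{-1}\sum_i (1-W_i)Y_i}{n^{-1}\sum_i (1-W_i)}.
\]
Set $\hat p = n^{-1}\sum_i W_i$, so that $\hat p - \pi = n^{-1}\sum_i(W_i-\pi) = O_p(n^{-1/2})$ by Chebyshev. Define $A_1 = n^{-1}\sum_i W_iY_i/\pi$ and $A_0 = n^{-1}\sum_i(1-W_i)Y_i/(1-\pi)$. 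Then
\[
\hat\mu_1 = A_1\cdot \pi/\hat p, \qquad \hat\mu_0 = A_0(1-\pi)/(1-\hat p),
\]
and $A_1 - A_0 = \hat\tau^{\HT}_n$. A second-order expansion gives $\pi/\hat p = 1 - (\hat p-\pi)/\pi + O_p(1/n)$ and $(1-\pi)/(1-\hat p) = 1 + (\hat p-\pi)/(1-\pi) + O_p(1/n)$. Since $|Y_i|\le C$, we have $A_1,A_0 = O_p(1)$. Hence
\[
\hat\tau^{\Haj}_n = \hat\tau^{\HT}_n - (\hat p-\pi)\Big(\frac{A_1}{\pi} + \frac{A_0}{1-\pi}\Big) + O_p(1/n).
\]

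Next, replace $A_1, A_0$ by deterministic centers up to $o_p(1)$ errors; since these multiply $\hat p-\pi = O_p(n^{-1/2})$, the resulting errors are $o_p(n^{-1/2})$. The natural centers are $\mu_1 = n^{-1}\sum_i \E[f_i(1,R_i)]$ and $\mu_0 = n^{-1}\sum_i\E[f_i(0,R_i)]$ under the given population. By the Lipschitz property of elements of $\F$ together with $R_i - \pi = O_p((n\rho_n)^{-1/2})$ uniformly in mean square (via Lemma \ref{interpret}(b), since $\Var(R_i)\le 1/d_i$), we get $\mu_w = n^{-1}\sum_i f_i(w,\pi) + o(1)$. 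It then suffices to show $A_w - \mu_w = o_p(1)$, which follows from a variance bound: $\Var(A_1)$ involves covariances $\Cov(W_iY_i,W_jY_j)$, and these are small except for $O(n)$ diagonal pairs. Rather than doing this directly, I would note that $A_1-\mu_1$ has a linearization of the same type as in Lemma 2 of \citet{Wager}, whose leading term is $O_p(n^{-1/2})$. Alternatively, a crude bound suffices: $\Var(A_1) \le n^{-2}\sum_{i,j}|\Cov|$, with the cross terms controlled by the weak dependence of $f_i(1,R_i)$ on any single $W_j$, giving $O(1/(n\rho_n))$. This yields
\[
\hat\tau^{\Haj}_n - \overline\tau_n = \hat\tau^{\HT}_n - \overline\tau_n - \frac{1}{n}\sum_i (W_i-\pi)\cdot \frac1n\sum_j\Big(\frac{f_j(1,\pi)}{\pi}+\frac{f_j(0,\pi)}{1-\pi}\Big) + o_p(n^{-1/2}).
\]
Plugging in Lemma \ref{move} for the HT part gives exactly $n^{-1}\sum_i \nu_i (W_i-\pi) + o_p(n^{-1/2})$.

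This argument is applied separately to $(G_n^\varphi,v_n^\varphi)$, where the requisite degree bounds are deterministic from Lemma \ref{interpret}, and to $(L_n^\sigma,\ell_n^\sigma)$. For the latter I would work on the event $E_n$ of Lemma \ref{rare}, whose complement has vanishing probability, and use Lemma \ref{exp} to bound expectations, exactly as in the proof of Lemma \ref{move}. Relabeling invariance is immediate since $\hat\tau^{\Haj}_n$ and $\overline\tau_n$ are permutation invariant. The main obstacle I expect is the step $A_w - \mu_w = o_p(1)$ together with the approximation $\mu_w \approx n^{-1}\sum f_i(w,\pi)$ in the random-graph case. Only consistency is needed there, though, so a second-moment bound using $\E[(R_i-\pi)^2]\le 1/d_i$ and the minimum-degree event $E_n$ should suffice; alternatively, one could invoke directly the H\'{a}jek linearization in \citet{Wager} if it is stated with the same remainder $R(A_n)$.
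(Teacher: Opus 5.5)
Your argument is correct, but it takes a different route from the paper. The paper's proof is essentially a citation: it reads off from the proof (not the statement) of Lemma 2 of \citet{Wager} that $\hat\tau^{\Haj}_n-\overline\tau_n=n^{-1}\sum_i\nu_i(A_n,u_n)(W_i-\pi)+O_p(R(A_n))$, with the same remainder $R(A_n)$ as in the HT case. It then reuses the bounds $R(G_n)=o(1/\sqrt n)$ and $R(L_n)=o_p(1/\sqrt n)$ from the proof of Lemma \ref{move}. Your closing alternative is exactly this. Your main argument instead derives the H\'{a}jek expansion from the already-established HT expansion by expanding $\pi/\hat p$ and $(1-\pi)/(1-\hat p)$ around $\hat p=\pi$. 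The centering term $-n^{-1}\sum_j\bigl(f_j(1,\pi)/\pi+f_j(0,\pi)/(1-\pi)\bigr)$ in $\nu_i$ then appears transparently as the limit of $A_1/\pi+A_0/(1-\pi)$ multiplying $-(\hat p-\pi)$. The only new ingredient is the consistency statement $A_w=n^{-1}\sum_i f_i(w,\pi)+o_p(1)$, which needs only the minimum-degree bound (on $E_n$ for $L_n$). You can get it more simply than through covariances: $|f_i(w,R_i)-f_i(w,\pi)|\le C|R_i-\pi|$ and $\E|R_i-\pi|\le d_i^{-1/2}$ reduce $A_w$ to an average of independent bounded terms. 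Your route buys self-containment given Lemma \ref{move}. It does not rely on checking that Li and Wager's appendix delivers the H\'{a}jek remainder in exactly the form $O_p(R(A_n))$, and it shows that the H\'{a}jek correction costs only $O_p(n^{-1/2})\cdot o_p(1)+O_p(1/n)$ beyond the HT remainder. The paper's route buys brevity and a single explicit remainder $R(A_n)$ shared by both estimators.
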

\begin{proof}
    By the proof of Lemma 2 of \citet{Wager} (given in their Appendix), conditional on any finite population $(A_n,u_n),$ we have
    \begin{align*}
    \hat\tau_n^{\Haj}(A_n,u_n)-\overline{\tau}_n(A_n,u_n) = 
        \frac{1}{n}\sum_{i=1}^n \nu_i(A_n,u_n)(W_i-\pi) + O_p(R(A_n))
    \end{align*}
    where $R(A_n)$ is defined in the proof of Lemma \ref{move}. By proceeding identically as in the proof of Lemma \ref{move}, we show that $R(L_n)=o_p(1/\sqrt{n})$ and $R(G_n)=o(1/\sqrt{n}),$ concluding the proof.
\end{proof}

\section{Coupling Estimators for $(G_n^\varphi,v_n^\varphi)$ and $(L_n^\sigma,\ell_n^\sigma)$}\label{decompose_app}

In this section, we prove Lemma \ref{coupling_lemma} for the HT estimator and an analogous result for the H\'{a}jek estimator. We begin with Lemma \ref{coupling_lemma}. Recall from \eqref{eq:delta} that 
\begin{align*}
    \Delta_n = \frac{1}{n}\sum_{i=1}^n \Big(\psi_i(G_n^\varphi,v_n^\varphi) - \psi_i(L_n^\sigma,\ell_n^\sigma)\Big)(W_i-\pi).
\end{align*}
By Lemma \ref{move}, the proof of Lemma \ref{coupling_lemma} reduces to showing that $\Delta_n=o_p(1/\sqrt{n}).$ For this purpose, we write $\Delta_n = D_1 + D_2$ where 
\begin{align*}
        D_1&=\frac{1}{n}\sum_{i=1}^n \left(\frac{v_n^\varphi(i,1,\pi)-\ell_n^\sigma(i,1,\pi)}{\pi} + \frac{v_n^\varphi(i,0,\pi)-\ell_n^\sigma(i,0,\pi)}{1-\pi}\right)(W_i-\pi),\\
        D_2 &= \frac{1}{n}\sum_{i=1}^n \left(\sum_{j=1}^n \frac{G_{ij}^\varphi}{d_j(G^\varphi)}((v_{n}^\varphi)'(j,1,\pi) - (v_{n}^\varphi)'(j,0,\pi)) - \frac{L_{ij}^\sigma}{d_{j}(L^\sigma)}((\ell_{n}^\sigma)'(j,1,\pi) - (\ell_{n}^\sigma)'(j,0,\pi))\right)(W_i-\pi).
    \end{align*}
Since $\E[D_1]=\E[D_2]=0$, we will show that \begin{align}
    \Var(D_1) &= o(1/n),\label{eq:D1}\\
    \Var(D_2) &= o(1/n),\label{eq:D2}
\end{align}
thus implying $\Delta_n=o_p(1/\sqrt{n}).$

In order to control $\Var(D_1)$ and $\Var(D_2)$, we use the fact that $(G_n^\varphi,v_n^\varphi)$ and $(L_n^\sigma,\ell_n^\sigma)$ are asymptotically similar. Combining Definition \ref{Mine} with Lemma \ref{random_order}, for any $(w,x)\in\{0,1\}\times(0,1),$ we almost surely have
\begin{align}
&\rho_n^{-1}\norm{G_n^\varphi - L_n^\sigma}_\square\to 0,\label{eq:graph_conv}\\
&\norm{v_n^\varphi(\cdot,w,x) -\ell_n^\sigma(\cdot,w,x)}_{1}\to 0\label{eq:pot_conv},\\
&\norm{(v_n^\varphi)'(\cdot,w,x) - (\ell_n^\sigma)'(\cdot,w,x)}_{1}\to 0. \label{eq:nice2}
\end{align}
Note that \eqref{eq:graph_conv} and \eqref{eq:pot_conv} follow from the triangle inequality. We now prove \eqref{eq:nice2}.
\begin{proof}[Proof of \eqref{eq:nice2}]
        For any $h\in(0, 1 - x),$ the triangle inequality gives
        \begin{align}
            &\left|(v_n^\varphi)'(t,w,x) - (\ell_n^\sigma)'(t,w,x)\right|\nonumber \\
            &\leq  T_1 + \left|\frac{v_n^\varphi(t,w,x+h)-v_n^\varphi(t,w,x)}{h}-\frac{\ell_n^\sigma(t,w,x+h)-\ell_n^\sigma(t,w,x)}{h}\right| + T_2 \nonumber \\
            &=  T_1 + \left|\frac{v_n^\varphi(t,w,x+h)-\ell_n^\sigma(t,w,x+h)}{h}-\frac{v_n^\varphi(t,w,x)-\ell_n^\sigma(t,w,x)}{h}\right| + T_2 \label{eq:long}
        \end{align}
        where 
        \begin{align*}
            T_1 &= \left|(v_n^\varphi)'(t,w,x)- \frac{v_n^\varphi(t,w,x+h)-v_n^\varphi(t,w,x)}{h}\right|,\\
            T_2 &= \left|(\ell_n^\sigma)'(t,w,x) - \frac{\ell_n^\sigma(t,w,x+h)-\ell_n^\sigma(t,w,x)}{h}\right|.
        \end{align*}
        Since we assume that elements of $\F$ have second derivatives bounded by $C,$ we know that derivatives of elements of $\F$ are $C$-Lipschitz. Thus, by the mean-value theorem, we see that $T_1,T_2\leq Ch.$ Taking the integral over $t\in[0,1]$ in \eqref{eq:long}, then the limsup as $n\to\infty$, and finally $h\to 0$, we get our desired result. Of course, we use the almost sure convergence stated in equation \eqref{eq:pot_conv}.
\end{proof} 

The key input for controlling $\Var(D_1)$ is \eqref{eq:pot_conv}, while the key inputs for $\Var(D_2)$ are \eqref{eq:graph_conv} and \eqref{eq:nice2}. Here, we show that $\Var(D_1)=o(1/n)$ while the proof of $\Var(D_2)=o(1/n)$ is given in the next section. 
\begin{proof}[Proof of \eqref{eq:D1}]
Recall that 
    \begin{align*}
        D_1&=\frac{1}{n}\sum_{i=1}^n \left(\frac{v_n^\varphi(i,1,\pi)-\ell_n^\sigma(i,1,\pi)}{\pi} + \frac{v_n^\varphi(i,0,\pi)-\ell_n^\sigma(i,0,\pi)}{1-\pi}\right)(W_i-\pi).
    \end{align*}
    Conditioning on $(v_n^\varphi,\ell_n^\sigma)$, which is independent of the $W_i$'s, the law of total variance gives
    \begin{align*}
        n\Var(D_1) = \frac{\pi(1-\pi)}{n}\sum_{i=1}^n \E\left[\left(\frac{v_n^\varphi(i,1,\pi)-\ell_n^\sigma(i,1,\pi)}{\pi} + \frac{v_n^\varphi(i,0,\pi)-\ell_n^\sigma(i,0,\pi)}{1-\pi}\right)^2\right].
    \end{align*}
    Fixing $w\in \{0,1\}$ and $\varepsilon > 0,$ define the discrepancy set
    $$S_w = \{i\in [n]: |v_{n}^\varphi(i,w,\pi) - \ell_{n}^\sigma(i,w,\pi)| > \varepsilon\}.$$
    Then, we see that
    \begin{align*}
       \norm{v_n^\varphi(\cdot, w,\pi)-\ell_n^\sigma(\cdot, w,\pi)}_1 \geq \varepsilon\cdot \frac{|S_w|}{n}
    \end{align*}
    which implies $|S_w| = o_p(n)$ by \eqref{eq:pot_conv}. Let $S=S_0\cup S_1$. On $S'=[n]\setminus S,$ both discrepancies are at most $\varepsilon,$ while on $S$ we can use uniform boundedness of the potential outcomes assumed in \eqref{eq:tech_potential}. Therefore, we get
    \begin{align*}
        \frac{1}{n}\sum_{i=1}^n \left(\frac{v_n^\varphi(i,1,\pi)-\ell_n^\sigma(i,1,\pi)}{\pi} + \frac{v_n^\varphi(i,0,\pi)-\ell_n^\sigma(i,0,\pi)}{1-\pi}\right)^2 &\leq \frac{|S'|}{n}\cdot \left(\frac{\varepsilon}{\pi} + \frac{\varepsilon}{1-\pi}\right)^2 + 
        \frac{|S|}{n}
        \cdot \left(\frac{2C}{\pi} + \frac{2C}{1-\pi}\right)^2\\
        &\leq \left(\frac{\varepsilon}{\pi} + \frac{\varepsilon}{1-\pi}\right)^2 + o_p(1)
    \end{align*}
    where the $o_p(1)$ term is uniformly bounded over $n$. Taking expectations on both sides, then the limsup as $n\to\infty,$ and finally letting $\varepsilon\to 0,$ we conclude that $n\Var(D_1)=o(1)$ as desired.
\end{proof}
\subsection{Proof of $\Var(D_2)=o(1/n)$}\label{d2_app}
Here, we show that $\Var(D_2)=o(1/n)$ and conclude the proof of Lemma \ref{coupling_lemma}. We work with the coupled graphs and suppress $(\varphi_n,\sigma_n)$ from the notation: When we write $(G_n,v_n)$ and $(L_n,\ell_n)$ below, we mean
$(G_n^{\varphi},v_n^{\varphi})$ and $(L_n^{\sigma},\ell_n^{\sigma})$ respectively. Moreover, we write $d_j(G)=d_j(G^\varphi_n)$ and $d_j(L)=d_j(L^\sigma_n)$ along with $G_{ij}=G_n^\varphi(i,j)$ and $L_{ij}=L_n^\sigma(i,j)$. 

\begin{proof}[Proof of \eqref{eq:D2}]
    
Take $n$ large enough so that the results of Lemma \ref{interpret} hold. Further recall that
    \begin{align*}
        D_2 = \frac{1}{n}\sum_{i=1}^n \left(\sum_{j=1}^n \frac{G_{ij}}{d_j(G)}x_j - \frac{L_{ij}}{d_j(L)}y_j\right)(W_i-\pi) 
    \end{align*}
    where $x_j = v_{n}'(j,1,\pi) - v_{n}'(j,0,\pi)$ and $y_j = \ell'_{n}(j,1,\pi) - \ell'_{n}(j,0,\pi).$ By \eqref{eq:nice2}, we know that $\norm{\mathbf{x}-\mathbf{y}}_1\to 0$ almost surely (where we embed the vectors $\mathbf{x}=(x_1,\dots, x_n)$ and $\mathbf{y}=(y_1,\dots, y_n)$ as a function on $[0,1]$). By the law of total variance, it suffices to show that
    \begin{align}
        n \Var(D_2) &= \frac{\pi(1-\pi)}{n}\sum_{i=1}^n \E\left[\left(\sum_{j=1}^n \frac{G_{ij}}{d_j(G)}x_j - \frac{L_{ij}}{d_j(L)}y_j\right)^2\right] = o(1).
        \label{eq: Variance2}
    \end{align}

    First, since $|x_j|,|y_j|\leq 2C$ and $d_j(G),d_j(L)\geq 1,$ note that 
    \begin{align*}
        \frac{1}{n}\sum_{i=1}^n \left(\sum_{j=1}^n \frac{G_{ij}}{d_j(G)}x_j - \frac{L_{ij}}{d_j(L)}y_j\right)^2 &\leq \frac{1}{n}\sum_{i=1}^n \left(\sum_{j=1}^n 4C\right)^2 = 16C^2 n^2.
    \end{align*}
    Recalling the event $E_n$ from Lemma \ref{rare}, we have
    \begin{align*}
        \frac{\pi(1-\pi)}{n}\sum_{i=1}^n \E\left[\left(\sum_{j=1}^n \frac{G_{ij}}{d_j(G)}x_j - \frac{L_{ij}}{d_j(L)}y_j\right)^2\cdot 1_{E_n'}\right]&\leq \pi(1-\pi)16C^2n^2\P[E'_n].
    \end{align*}
    Further note that $$n^2\P[E'_n]\leq \frac{n^3}{e^{Kn\rho_n}}\leq\frac{n^3}{e^{Kn^{1+\alpha}}} = o(1)$$
    by Lemmas \ref{interpret} and \ref{rare}. Thus, for the remaining calculations, we will assume that the event $E_n$ holds.
    
    In particular, by the identity $(a_1 + a_2 + a_3)^2 \leq 3(a_1^2 + a_2^2 + a_3^2),$ it suffices to show that
    \begin{align*}
        \pi(1-\pi)(M_1+M_2+M_3) = o(1)
    \end{align*}
    where we write
    \begin{align*}
        M_1 &= \frac{3}{n}\sum_{i=1}^n \E\left[\left(\sum_{j=1}^n \left(\frac{G_{ij}}{d_j(G)} - \frac{G_{ij}}{d_j(L)}\right)x_j\right)^2\cdot 1_{E_n}\right]\\
        M_2 &= \frac{3}{n}\sum_{i=1}^n \E\left[\left(\sum_{j=1}^n \left(\frac{G_{ij}}{d_j(L)} - \frac{L_{ij}}{d_j(L)}\right)x_j\right)^2\cdot 1_{E_n}\right]\\
        M_3 &= \frac{3}{n}\sum_{i=1}^n \E\left[\left(\sum_{j=1}^n \frac{L_{ij}}{d_j(L)}(x_j-y_j)\right)^2\cdot 1_{E_n}\right].
    \end{align*}
    
    We will show that $M_1,M_2,M_3$ are each $o(1)$. In each subsection, $S$ will denote a discrepancy set of vertices.
    \subsubsection{$M_1=o(1)$}
    Assume that the event 
    $E_n=\left\{\min\limits_{1\leq i\leq n}d_i(L)\geq c/2\cdot n\rho_n\right\}$
    holds. Fix any $\varepsilon > 0$ and let 
    \begin{align*}
        S_i&=\{j\in [n]: G_{ij}=1, |d_j(G) - d_j(L)|>\varepsilon \cdot n\rho_n\},\\
        S^+&= \{j\in [n]: d_j(G) - d_j(L)>\varepsilon \cdot n\rho_n\},\\
        S^-&= \{j\in [n]: d_j(G) - d_j(L)<-\varepsilon \cdot n\rho_n\}
    \end{align*}
    along with $S=S^+\cup S^-.$ Next, by Lemma \ref{interpret},
    $$\frac{|d_j(G)-d_j(L)|}{d_j(G)d_j(L)}\leq \frac{1}{d_j(G)}+\frac{1}{d_j(L)}\leq \frac{1}{c'\cdot n\rho_n} + \frac{1}{c/2\cdot n\rho_n}\leq \frac{c''}{n\rho_n}$$
    for some constant $c''>0.$ Fixing $i\in [n],$ consider vertices in $S_i$ and $S_i'$ separately along with the inequality $(a+b)^2\leq 2(a^2+b^2)$  to get
    \begin{align*}
        \left(\sum_{j=1}^n \left(\frac{G_{ij}}{d_j(G)} - \frac{G_{ij}}{d_j(L)}\right)x_j\right)^2 &\leq \left(\sum_{j=1}^n \frac{|d_j(G)-d_j(L)|}{d_j(G)d_j(L)}G_{ij}|x_j|\right)^2\\
        &\leq (2C)^2\left[2\left(\sum_{j\in S_i'} \frac{\varepsilon\cdot n\rho_n}{c'(c/2)\cdot n^2\rho_n^2}G_{ij}\right)^2 + 2\left(\sum_{j\in S_i}\frac{c''}{ n\rho_n}G_{ij}\right)^2\right]\\
        &\leq 8C^2\left[\left(\frac{\varepsilon}{c'(c/2)\cdot n\rho_n}d_i(G)\right)^2 + \frac{(c'')^2}{n^2\rho_n^2}|S_i|d_i(G)\right]
    \end{align*}
    where the final step is by Cauchy--Schwarz. Summing over $i$ gives
    \begin{align*}
        \frac{3}{n}\sum_{i=1}^n \left(\sum_{j=1}^n \left(\frac{G_{ij}}{d_j(G)} - \frac{G_{ij}}{d_j(L)}\right)x_j\right)^2&\leq 
        24C^2\left[\frac{\varepsilon^2}{(c')^2(c/2)^2}\frac{1}{n^3\rho_n^2}\sum_{i=1}^n d_i(G)^2 + (c'')^2\frac{1}{n^3\rho_n^2} \sum_{i=1}^n |S_i|d_i(G) \right]\\
        &\leq  24C^2\left[\frac{\varepsilon^2\sqrt{C'}}{(c')^2(c/2)^2} + (c'')^2\frac{1}{n^3\rho_n^2} \sum_{i=1}^n |S_i|d_i(G)\right]
    \end{align*}
    where the second inequality is by Lemma \ref{interpret}.
    
    We now deal with the summation in the final expression above. Note that
    \begin{align*}
        \norm{G-L}_\square &\geq \frac{1}{n^2}\left|\sum_{j\in S^+}\sum_{i=1}^n (G_{ij}-L_{ij})\right|\geq \frac{\varepsilon\rho_n}{n}\cdot |S^+|.
    \end{align*}
    Since $\norm{G-L}_\square/\rho_n\to 0$ almost surely by \eqref{eq:graph_conv}, we see that $|S^+|=o_p(n).$ The same argument gives $|S^-|=o_p(n)$ as well. Thus, we conclude
    \begin{align*}
        \sum_{i=1}^n |S_i| = \sum_{j\in S}d_j(G)\leq \sqrt{|S|\sum_{j=1}^n d_j(G)^2} = o_p(n^2\rho_n)
    \end{align*}
    where the last step is by Lemma \ref{interpret}. 
    
    Next, let $F=\{i: |S_i|\geq \varepsilon\cdot n\rho_n\}.$ Then, we get
    \begin{align*}
        |F|\cdot \varepsilon \cdot n\rho_n\leq \sum_{i=1}^n |S_i|\leq  o_p(n^2\rho_n)
    \end{align*}
    and thus $|F|=o_p(n).$ As a result, noting $|S_i|\leq d_i(G)$ and considering $i\in F$ and $i\in F'$ separately, we conclude that
    \begin{align*}
        \frac{1}{n^3\rho_n^2}\sum_{i=1}^n |S_i|d_i(G) &\leq \frac{1}{n^3\rho_n^2}\cdot (\varepsilon n\rho_n)\cdot \sum_{i=1}^n d_i(G) + \frac{1}{n^3\rho_n^2}\sum_{i\in F}d_i(G)^2\\
        &\leq \varepsilon\cdot (C')^{1/4} + \frac{1}{n^3\rho_n^2}\sqrt{|F|\sum_{i=1}^n d_i(G)^4}\\
        &\leq \varepsilon\cdot (C')^{1/4}+ \sqrt{\frac{|F|}{n}}\cdot \sqrt{C'}
    \end{align*}
    where we also used Lemma \ref{interpret}. Bringing everything together, we have shown that 
    \begin{align*}
        1_{E_n}\cdot \frac{3}{n}\sum_{i=1}^n \left(\sum_{j=1}^n \left(\frac{G_{ij}}{d_j(G)} - \frac{G_{ij}}{d_j(L)}\right)x_j\right)^2\leq C_1 \varepsilon^2 + C_2\cdot \varepsilon + C_3\cdot o_p(1)
    \end{align*}
    where $C_1,C_2,C_3>0$ are constants and the $o_p(1)$ term is uniformly bounded over $n.$ Taking expectations gives
    \begin{align*}
        M_1 &\leq C_1\varepsilon^2 + C_2\varepsilon + o(1).
    \end{align*}
    Taking the limsup as $n\to\infty,$ then letting $\varepsilon\to 0,$ we conclude that $M_1=o(1)$ as desired.

    \subsubsection{$M_2=o(1)$}
    Again, assume that the event 
    $E_n=\left\{\min\limits_{1\leq i\leq n}d_i(L)\geq c/2\cdot n\rho_n\right\}$
    holds. Fix any $\varepsilon > 0$ and let
    \begin{align*}
        S^+&= \left\{i\in [n]: 1_{E_n}\cdot \sum_{j=1}^n \left(\frac{G_{ij}}{d_j(L)} - \frac{L_{ij}}{d_j(L)}\right)x_j > \varepsilon\right\},\\
        S^-&= \left\{i\in [n]: 1_{E_n}\cdot \sum_{j=1}^n \left(\frac{G_{ij}}{d_j(L)} - \frac{L_{ij}}{d_j(L)}\right)x_j < -\varepsilon\right\}.
    \end{align*}
    Writing $S = S^+\cup S^-$ and considering $i\in S$ and $i\in S'$ separately, we get
    \begin{align*}
        \frac{3}{n}\sum_{i=1}^n \left(\sum_{j=1}^n \left(\frac{G_{ij}}{d_j(L)} - \frac{L_{ij}}{d_j(L)}\right)x_j\right)^2 &\leq \frac{3}{n}\left[n\cdot \varepsilon^2 + \frac{4C^2}{(c/2)^2n^2\rho_n^2}\sum_{i\in S}\left(d_i(G)+d_i(L)\right)^2\right]\\
        &\leq 3\varepsilon^2 + \frac{24C^2}{(c/2)^2}\frac{1}{n^3\rho_n^2}\sum_{i\in S}(d_i(G)^2+d_i(L)^2)\\
        &\leq 3\varepsilon^2 + \frac{24C^2}{(c/2)^2}\frac{1}{n^3\rho_n^2}\left(\sqrt{|S|\sum_{i=1}^n d_i(G)^4} + \sqrt{|S|\sum_{i=1}^n d_i(L)^4}\right).
    \end{align*}
    Taking expectations and applying Cauchy--Schwarz along with Lemmas \ref{interpret} and \ref{exp}, we get
    \begin{align*}
        M_2 &\leq 3\varepsilon^2 + \frac{24C^2}{(c/2)^2}\frac{1}{n^3\rho_n^2}\left(\sqrt{\E|S|C'n^5\rho_n^4} + \sqrt{\E|S|\E\left[\sum_{i=1}^n d_i(L)^4\right]}\right)\\
        &= 3\varepsilon^2 + C_1\cdot \sqrt{\frac{\E|S|}{n}}
    \end{align*}
    for some constant $C_1>0.$ 
    
    To conclude, we will show that $|S|=o_p(n).$ Note that 
    \begin{align*}
        1_{E_n\cdot }\frac{1}{n}\left|\sum_{i\in S^+} \sum_{j=1}^n \left(\frac{G_{ij}}{d_j(L)} - \frac{L_{ij}}{d_j(L)}\right)x_j \right| &=  1_{E_n}\cdot \frac{1}{n}\left| \sum_{j=1}^n \frac{x_j}{d_j(L)}\sum_{i\in S^+}\left(G_{ij}-L_{ij}\right) \right|\\
        &\leq \frac{2C}{(c/2)n^2\rho_n} \sum_{j=1}^n \left|\sum_{i\in S^+}\left(G_{ij}-L_{ij}\right) \right|\\
        &\leq \frac{4C}{c/2}\cdot\frac{\norm{G-L}_\square}{\rho_n}
    \end{align*}
    where the last inequality follows from partitioning the $j$'s based on whether the inner summation is positive or negative. Thus, we get
    \begin{align*}
        \frac{4C}{c/2}\cdot\frac{\norm{G-L}_\square}{\rho_n} &\geq 1_{E_n}\cdot \frac{1}{n}\left|\sum_{i\in S^+} \sum_{j=1}^n \left(\frac{G_{ij}}{d_j(L)} - \frac{L_{ij}}{d_j(L)}\right)x_j \right|\\
        &\geq \frac{1}{n}\sum_{i\in S^+} 1_{E_n}\cdot \sum_{j=1}^n \left(\frac{G_{ij}}{d_j(L)} - \frac{L_{ij}}{d_j(L)}\right)x_j\\
        &\geq \frac{|S^+|\varepsilon}{n}.
    \end{align*}
    Since $\rho_n^{-1}\norm{G-L}_\square\to 0$ almost surely, we conclude that $|S^+|=o_p(n)$. Proceeding identically, we get $|S^-|=o_p(n)$ as well.

    Combining everything, we have
    \begin{align*}
        M_2\leq 3\varepsilon^2 + C_1\cdot \sqrt{\E[o_p(1)]}
    \end{align*}
    where the $o_p(1)$ term is uniformly bounded over $n$. Taking the limsup as $n\to\infty,$ then $\varepsilon\to 0,$ we get $M_2=o(1)$ as desired.
    \subsubsection{$M_3=o(1)$}
    As in previous parts, assume that the event 
    $E_n=\left\{\min\limits_{1\leq i\leq n}d_i(L)\geq c/2\cdot n\rho_n\right\}$
    holds. Fix $\varepsilon > 0$ and let
    \begin{align*}
        S_i&=\{j\in [n]: L_{ij}=1, |x_j-y_j|>\varepsilon\},\\
        S&=\{j\in [n]: |x_j-y_j|>\varepsilon\}.
    \end{align*}
    Then, we see that
    \begin{align*}
        \frac{3}{n}\sum_{i=1}^n\left(\sum_{j=1}^n \frac{L_{ij}}{d_j(L)}(x_j-y_j)\right)^2 &\leq \frac{6}{n}\left[\left(\frac{1}{(c/2)^2 n^2\rho_n^2}\sum_{i=1}^n \varepsilon^2 d_i(L)^2\right)+\frac{16C^2}{(c/2)^2n^2\rho_n^2}\sum_{i=1}^n\left(\sum_{j\in S_i}L_{ij}\right)^2 \right]\\
        &\leq \frac{6\varepsilon^2}{(c/2)^2}\cdot \frac{1}{n^3\rho_n^2}\sum_{i=1}^n d_i(L)^2 + \frac{96C^2}{(c/2)^2}\frac{1}{n^3\rho_n^2}\sum_{i=1}^n |S_i|d_i(L).
    \end{align*}
    Since $\norm{\mathbf{x}-\mathbf{y}}_1\to 0$ almost surely, we see that $|S|=o_p(n)$ by similar arguments from before. Moreover,
    \begin{align*}
        \sum_{i=1}^n |S_i|= \sum_{j\in S}d_j(L)\leq \sqrt{|S|\sum_{j=1}^n d_j(L)^2}=o_p(n^2\rho_n)
    \end{align*}
    since we know that $\sum_{j=1}^n d_j(L)^2 = O_p(n^3\rho_n^2)$ by Lemma \ref{exp}.
    
    Let $F=\{i\in[n]: |S_i|\geq\varepsilon\cdot n\rho_n\}$ so that
    \begin{align*}
        \varepsilon\cdot n\rho_n |F|\leq \sum_{i=1}^n |S_i|\leq o_p(n^2\rho_n).
    \end{align*}
    This implies that $|F|=o_p(n).$ To conclude, note that 
    \begin{align*}
        \frac{1}{n^3\rho_n^2}\sum_{i=1}^n |S_i|d_i(L) &\leq \varepsilon\cdot \frac{1}{n^2\rho_n}\sum_{i=1}^n d_i(L) + \frac{1}{n^3\rho_n^2}\sum_{i\in F}d_i(L)^2
    \end{align*}
    where we used $|S_i|\leq d_i(L).$ Hence, we have shown that 
    \begin{align*}
        \frac{3}{n}\sum_{i=1}^n\left(\sum_{j=1}^n \frac{L_{ij}}{d_j(L)}(x_j-y_j)\right)^2 &\leq C_1\cdot \varepsilon^2\cdot \frac{1}{n^3\rho_n^2}\sum_{i=1}^n d_i(L)^2 + C_2\cdot \left(\varepsilon\cdot \frac{1}{n^2\rho_n}\sum_{i=1}^n d_i(L) + \frac{1}{n^3\rho_n^2}\sum_{i\in F}d_i(L)^2\right) \\
        &\leq  C_1\cdot \varepsilon^2\cdot \frac{1}{n^3\rho_n^2}\sum_{i=1}^n d_i(L)^2 + C_2\cdot \left(\varepsilon\cdot \frac{1}{n^2\rho_n}\sum_{i=1}^n d_i(L) + \frac{1}{n^3\rho_n^2}\sqrt{|F|\sum_{i=1}^n d_i(L)^4}\right) 
    \end{align*}
    on the event $E_n$ for some constants $C_1,C_2>0.$ Taking expectations and applying Lemma \ref{exp}, there are constants $C_1', C_2',C_3'>0$ such that
    \begin{align*}
        M_3 &\leq C_1'\cdot \varepsilon^2 + C_2'\cdot \varepsilon + C_3'\cdot \sqrt{\frac{\E|F|}{n}}.
    \end{align*}
    Taking the limsup as $n\to\infty,$ then $\varepsilon\to 0,$ we get $M_3=o(1)$ as desired. This concludes the proof.
\end{proof}

\subsection{Coupling the H\'{a}jek Estimators}
In this section, we state and prove an analogue of Lemma \ref{coupling_lemma} for the H\'{a}jek estimator.
\begin{lemma}\label{coupling_lemma_Haj}
    Under the same treatments $W_1,\dots, W_n,$ we have
\begin{align*}
    \hat\tau_n^{\Haj}(G_n^\varphi, v^\varphi_n) - \overline\tau_n(G_n^\varphi, v_n^\varphi)  = \hat\tau_n^{\Haj}(L_n^\sigma,\ell_n^\sigma) - \overline\tau_n(L_n^\sigma,\ell_n^\sigma) +  o_p(1/\sqrt{n}).
\end{align*}
\end{lemma}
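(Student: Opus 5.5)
By Lemma \ref{move_Haj}, both sides of Lemma \ref{coupling_lemma_Haj} are linear statistics in $(W_i-\pi)$ up to $o_p(1/\sqrt{n})$ error. It therefore suffices to show that
\begin{align*}
    \Delta_n^{\Haj} = \frac{1}{n}\sum_{i=1}^n \Big(\nu_i(G_n^\varphi,v_n^\varphi) - \nu_i(L_n^\sigma,\ell_n^\sigma)\Big)(W_i-\pi) = o_p(1/\sqrt{n}).
\end{align*}
The plan is to reduce this to the HT case already handled. Write $a_i = \frac{v_n^\varphi(i,1,\pi)}{\pi} + \frac{v_n^\varphi(i,0,\pi)}{1-\pi}$ and $b_i$ for the analogous quantity built from $\ell_n^\sigma$. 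Comparing the definitions of $\psi_i$ and $\nu_i$, we get $\nu_i(A_n,u_n) = \psi_i(A_n,u_n) - \frac{1}{n}\sum_j(\text{unit term}_j)$. Hence
\begin{align*}
    \Delta_n^{\Haj} = \Delta_n - \left(\frac{1}{n}\sum_{j=1}^n (a_j-b_j)\right)\cdot\frac{1}{n}\sum_{i=1}^n (W_i-\pi),
\end{align*}
where $\Delta_n = D_1+D_2$ is the quantity from \eqref{eq:delta}.

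The first term has already been shown to be $o_p(1/\sqrt{n})$ in the proof of Lemma \ref{coupling_lemma}, via \eqref{eq:D1} and \eqref{eq:D2}. For the second term, we have $\frac{1}{n}\sum_i (W_i-\pi) = O_p(1/\sqrt{n})$ by the CLT (or Chebyshev), since the $W_i$ are i.i.d.\ Bernoulli. It remains to show that the scalar $\bar{c}_n = \frac{1}{n}\sum_j (a_j-b_j)$ is $o_p(1)$. We bound
\begin{align*}
    |\bar c_n| \le \frac{1}{\pi}\norm{v_n^\varphi(\cdot,1,\pi)-\ell_n^\sigma(\cdot,1,\pi)}_1 + \frac{1}{1-\pi}\norm{v_n^\varphi(\cdot,0,\pi)-\ell_n^\sigma(\cdot,0,\pi)}_1,
\end{align*}
and the right-hand side tends to $0$ almost surely by \eqref{eq:pot_conv}. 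Here we use the fact that the step-function $L^1$ norm equals the empirical average of absolute differences.

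A small care point is that $\bar c_n$ depends on $U_1,\dots,U_n$ and is therefore random, but it is independent of $W$. So the product is $o_p(1)\cdot O_p(1/\sqrt{n}) = o_p(1/\sqrt{n})$ directly, with no need for a variance computation. Alternatively, one can condition on $U$ and use $\E[\bar c_n^2]\to 0$ by bounded convergence, since $|\bar c_n|\le 2C/\pi+2C/(1-\pi)$.

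The main obstacle is essentially already resolved: the graph-weighted derivative term, which is the delicate part, is identical in $\nu_i$ and $\psi_i$, so the bound on $D_2$ carries over verbatim. The only check is that \eqref{eq:pot_conv} applies at $x=\pi\in(0,1)$, which it does. Combining the two pieces yields $\Delta_n^{\Haj}=o_p(1/\sqrt{n})$, and Lemma \ref{move_Haj} then gives the claim.
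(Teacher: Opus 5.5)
Your proposal is correct and follows essentially the same route as the paper: it writes the H\'{a}jek linearization difference as the HT difference $\Delta_n$ plus the cross term $\bigl(\frac{1}{n}\sum_j (b_j-a_j)\bigr)\cdot\frac{1}{n}\sum_i (W_i-\pi)$ (the paper's $D_3$). It then bounds the scalar factor by the $L^1$ norms in \eqref{eq:pot_conv} and multiplies by the $O_p(1/\sqrt{n})$ Bernoulli average.
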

\begin{proof}
    Recalling $\psi_i$ from Lemma \ref{move} and $\nu_i$ from Lemma \ref{move_Haj}, note that 
    \begin{align*}
          \frac{1}{n}\sum_{i=1}^n \Big(\nu_i(G_n^\varphi,v_n^\varphi) - \nu_i(L_n^\sigma,\ell_n^\sigma)\Big)(W_i-\pi) =  \frac{1}{n}\sum_{i=1}^n \Big(\psi_i(G_n^\varphi,v_n^\varphi) - \psi_i(L_n^\sigma,\ell_n^\sigma)\Big)(W_i-\pi) + D_3
    \end{align*}
    where 
    \begin{align*}
        D_3 &=  \frac{1}{n}\sum_{i=1}^n\frac{1}{n}\sum_{j=1}^n  \Bigg[\frac{\ell_n^\sigma(j,1,\pi)-v_n^\varphi(j,1,\pi)}{\pi} + \frac{\ell_n^\sigma(j,0,\pi) - v_n^\varphi(j,0,\pi)}{1-\pi}\Bigg](W_i-\pi)\\
        &= \frac{1}{n}\sum_{j=1}^n  \Bigg[\frac{\ell_n^\sigma(j,1,\pi)-v_n^\varphi(j,1,\pi)}{\pi} + \frac{\ell_n^\sigma(j,0,\pi) - v_n^\varphi(j,0,\pi)}{1-\pi}\Bigg]\cdot \Bigg[\frac{1}{n}\sum_{i=1}^n (W_i-\pi)\Bigg].
    \end{align*}
    By Lemmas \ref{coupling_lemma}, \ref{move}, \ref{move_Haj}, it suffices to show that $D_3 = o_p(1/\sqrt{n}).$ Note that 
    \begin{align*}
       &\left|\frac{1}{n}\sum_{j=1}^n  \Bigg[\frac{\ell_n^\sigma(j,1,\pi)-v_n^\varphi(j,1,\pi)}{\pi} + \frac{\ell_n^\sigma(j,0,\pi) - v_n^\varphi(j,0,\pi)}{1-\pi}\Bigg]\right|\\
       &\leq \frac{\norm{\ell_n^\sigma(\cdot,1,\pi) - v_n^\varphi(\cdot,1,\pi)}_1}{\pi} + \frac{\norm{\ell_n^\sigma(\cdot,0,\pi) - v_n^\varphi(\cdot,0,\pi)}_1}{1-\pi} \\
       &= o_p(1)
    \end{align*}
    where the last step is by \eqref{eq:pot_conv}. Since $n^{-1}\sum_{i=1}^n (W_i-\pi)=O_p(1/\sqrt{n}),$ we obtain $D_3 = o_p(1/\sqrt{n})$ and conclude the proof.
\end{proof}
\section{Distribution of Estimators after Relabeling}
In Section \ref{decompose_app}, we proved Lemmas \ref{coupling_lemma} and \ref{coupling_lemma_Haj} to get
\begin{align*}
    \hat\tau_n^{\HT}(G_n^\varphi, v^\varphi_n) - \overline\tau_n(G_n^\varphi, v_n^\varphi)  &= \hat\tau_n^{\HT}(L_n^\sigma,\ell_n^\sigma) - \overline\tau_n(L_n^\sigma,\ell_n^\sigma) +  o_p(1/\sqrt{n}),\\
    \hat\tau_n^{\Haj}(G_n^\varphi, v^\varphi_n) - \overline\tau_n(G_n^\varphi, v_n^\varphi)  &= \hat\tau_n^{\Haj}(L_n^\sigma,\ell_n^\sigma) - \overline\tau_n(L_n^\sigma,\ell_n^\sigma) +  o_p(1/\sqrt{n}).
\end{align*}
In order to relate these results to $\widehat\tau_n(G_n,v_n) - \overline{\tau}_n(G_n,v_n)$, we prove the following general lemma.

\begin{lemma}\label{permutation}
        Let $(A_n,u_n)$ describe a (possibly random) finite population under anonymous interference. Let $\xi_n\in S_n$ denote a (possibly random) permutation. If $(A_n,u_n,\xi_n)$ is independent of the treatments $\{W_i\}_{i=1}^n$, then we have
        \begin{align*}
            \hat\tau_n^{\HT}(A_n^{\xi}, u_n^\xi) - \overline\tau_n(A_n^{\xi}, u_n^\xi) &\ed \hat\tau_n^{\HT}(A_n, u_n) - \overline\tau_n(A_n, u_n).\\
            \hat\tau_n^{\Haj}(A_n^{\xi}, u_n^\xi) - \overline\tau_n(A_n^{\xi}, u_n^\xi) &\ed \hat\tau_n^{\Haj}(A_n, u_n) - \overline\tau_n(A_n, u_n).
        \end{align*}
\end{lemma}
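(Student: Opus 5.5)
The plan is to condition on $(A_n,u_n,\xi_n)$, which by assumption is independent of $W$, and reduce the claim to a deterministic statement: for a fixed population $(A,u)$ and a fixed permutation $\xi\in S_n$, the centered estimator computed on $(A^\xi,u^\xi)$ has the same law under i.i.d.\ $\mathrm{Ber}(\pi)$ treatments as the one computed on $(A,u)$. Equality of the conditional laws, for every realization, then integrates to equality of the unconditional laws. Formally, $\P[T(A^\xi,u^\xi,W)\in B]=\E\big[\P[T(A^\xi,u^\xi,W)\in B\mid A_n,u_n,\xi_n]\big]$, and the inner probability equals $\P[T(A,u,W)\in B\mid A_n,u_n]$ by the fixed case. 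Here $T$ denotes either centered estimator.

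For the fixed case, define the relabeled treatment vector $\tilde W$ by $\tilde W_{\xi(i)}=W_i$, that is, $\tilde W_k=W_{\xi^{-1}(k)}$. Since the $W_i$ are i.i.d., $\tilde W\ed W$. I would then check unit by unit that population $(A^\xi,u^\xi)$ under $W$ coincides with population $(A,u)$ under $\tilde W$, with unit $i$ mapped to unit $\xi(i)$.

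\begin{itemize}
\item \textbf{Neighbourhoods and degrees.} Because $A^\xi(i,j)=A(\xi(i),\xi(j))$, the neighbours of $i$ in $A^\xi$ are exactly $\xi^{-1}$ of the neighbours of $\xi(i)$ in $A$. Hence $d_i(A^\xi)=d_{\xi(i)}(A)$.
\item \textbf{Exposures.} It follows that $R_i(A^\xi)$ under $W$ equals $R_{\xi(i)}(A)$ under $\tilde W$.
\item \textbf{Outcomes.} Since $u^\xi(i)=u(\xi(i))$, the observed outcome $Y_i$ in the relabeled population under $W$ equals $Y_{\xi(i)}$ in the original population under $\tilde W$.
\end{itemize}

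Both estimators are symmetric sums over units of functions of $(W_i,Y_i)$, so reindexing $k=\xi(i)$ gives $\hat\tau_n(A^\xi,u^\xi;W)=\hat\tau_n(A,u;\tilde W)$ pathwise. This holds for both HT and Hájek, since the Hájek denominators $\sum W_i$ and $\sum(1-W_i)$ are permutation invariant.

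For the estimand, the expectation defining $\overline\tau_n$ is taken over $W$. The same reindexing, together with $\tilde W\ed W$, yields $\overline\tau_n(A^\xi,u^\xi)=\overline\tau_n(A,u)$. Combining the two identities gives $T(A^\xi,u^\xi;W)=T(A,u;\tilde W)\ed T(A,u;W)$.

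The only delicate step is the conditioning argument, where $\overline\tau_n$ must be read as the conditional expectation given the population (including $\xi_n$). This is legitimate precisely because of the independence of $(A_n,u_n,\xi_n)$ and $W$. Everything else is bookkeeping with the index map $\xi$.
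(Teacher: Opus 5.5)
Your proposal is correct and takes essentially the same route as the paper. The paper also conditions on $(A_n,u_n,\xi_n)$ and proves the pathwise identity $\hat\tau_n(A^\xi,u^\xi;W)=\hat\tau_n(A,u;W^{\xi^{-1}})$, with your $\tilde W$ being exactly $W^{\xi^{-1}}$, via permutation invariance of the deterministic maps $H$, $J$ and $h$; it then concludes from $W\ed W^{\xi^{-1}}$ conditionally and integrates out, as you do.
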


\begin{proof}[Proof of Lemma~\ref{permutation}]
Given a graph $A_n,$ we again write $d_i(A)$ instead of $d_i(A_n)$. We begin with the HT estimator. Recall that
\begin{align*}
    \hat\tau_n^{\HT}(A_n, u_n) &= \frac{1}{n}\sum_{i=1}^n \left(\frac{W_i}{\pi}-\frac{1-W_i}{1-\pi}\right)u_{n,i}\left(W_i, \frac{\sum_{j\sim i}W_j}{d_i(A)}\right).
\end{align*}
For deterministic $(B,b,w),$ define
\begin{align*}
    H(B,b,w)= \frac{1}{n}\sum_{i=1}^n \left(\frac{w_i}{\pi}-\frac{1-w_i}{1-\pi}\right)b_i\left(w_i, \frac{\sum_{j\sim i}w_j}{d_i(B)}\right)
\end{align*}
so that $\hat\tau_n^{\HT}(A_n,u_n) = H(A_n,u_n,W).$ We check that $H$ is permutation-invariant. Fix any $\eta\in S_n.$ Then, we get
\begin{align*}
   H(B^\eta,b^\eta,w^\eta) &= \frac{1}{n}\sum_{i=1}^n \left(\frac{w_{\eta(i)}}{\pi}-\frac{1-w_{\eta(i)}}{1-\pi}\right)b_{\eta(i)}\left(w_{\eta(i)}, \frac{\sum_{j\sim_{B^\eta} i}w_{\eta(j)}}{d_{i}(B^\eta)}\right)\\
    &= \frac{1}{n}\sum_{i=1}^n \left(\frac{w_{\eta(i)}}{\pi}-\frac{1-w_{\eta(i)}}{1-\pi}\right)b_{\eta(i)}\left(w_{\eta(i)}, \frac{\sum_{j: \eta(j)\sim_{B} \eta(i)}w_{\eta(j)}}{d_{\eta(i)}(B)}\right)\\
    &= \frac{1}{n}\sum_{i=1}^n \left(\frac{w_{i}}{\pi}-\frac{1-w_{i}}{1-\pi}\right)b_{i}\left(w_{i}, \frac{\sum_{j: \eta(j)\sim_{B} i}w_{\eta(j)}}{d_{i}(B)}\right)\\
    &= \frac{1}{n}\sum_{i=1}^n \left(\frac{w_{i}}{\pi}-\frac{1-w_{i}}{1-\pi}\right)b_{i}\left(w_{i}, \frac{\sum_{j \sim_{B} i}w_{j}}{d_i(B)}\right)\\
    &= H(B,b,w)
\end{align*}
as desired. Similarly, recall that 
\begin{align*}
   \overline\tau_n(A_n,u_n) &=\E\left[\frac{1}{n}\sum_{i=1}^n u_{n,i}\left(1, \frac{\sum_{j\sim i}W_j}{d_i(A)}\right)-u_{n,i}\left(0, \frac{\sum_{j\sim i}W_j}{d_i(A)}\right)\mid A_n,u_n\right].
\end{align*}
Since $(A_n,u_n)$ is independent of $\{W_i\}_{i=1}^n,$ we can write $\overline\tau_n = h(A_n,u_n)$ where 
\begin{align*}
    h(B,b)= \E\left[\frac{1}{n}\sum_{i=1}^n b_i\left(1, \frac{\sum_{j\sim i}W_j}{d_i(B)}\right)-b_i\left(0, \frac{\sum_{j\sim i}W_j}{d_i(B)}\right)\right]
\end{align*}
for deterministic $(B,b).$ By the same relabeling computation as above, one can check that $h$ is also permutation invariant: For any fixed $\eta\in S_n,$ we have $h(B^\eta,b^\eta) = h(B,b).$

Therefore, we can write
\begin{align*}
    \hat\tau_n^{\HT}(A_n,u_n) - \overline{\tau}_n(A_n,u_n) &= H(A_n,u_n,W) - h(A_n,u_n),\\
    \hat\tau_n^{\HT}(A_n^\xi,u_n^\xi) - \overline{\tau}_n(A_n^\xi,u_n^\xi) &= H(A_n^\xi,u_n^\xi,W) - h(A_n^\xi,u_n^\xi)\\
    &= H(A_n,u_n,W^{\xi^{-1}})-h(A_n,u_n).
\end{align*}
Conditional on $(A_n,u_n,\xi_n),$ we know that
\begin{align*}
    (W_1,\dots, W_n)\ed \left(W_{\xi^{-1}(1)},\dots, W_{\xi^{-1}(n)}\right).
\end{align*}
Hence, conditional on $(A_n,u_n,\xi_n),$ we see that
\begin{align*}
    \hat\tau_n^{\HT}(A_n,u_n) - \overline{\tau}_n(A_n,u_n) \ed \hat\tau_n^{\HT}(A_n^\xi,u_n^\xi) - \overline{\tau}_n(A_n^\xi,u_n^\xi).
\end{align*}
In particular, the result holds unconditionally as well. 

To conclude, the argument for the H\'{a}jek estimator is identical. For deterministic $(B,b,w),$ define
\begin{align*}
    J(B,b,w)= \frac{\sum_{i=1}^n w_i b_i\left(w_i,\frac{\sum_{j\sim i}w_j}{d_i(B)}\right)}{\sum_{i=1}^n w_i} - \frac{\sum_{i=1}^n (1-w_i) b_i\left(w_i,\frac{\sum_{j\sim i}w_j}{d_i(B)}\right)}{\sum_{i=1}^n (1-w_i)}
\end{align*}
so that $\hat\tau_n^{\Haj}(A_n,u_n) = J(A_n,u_n,W).$ It suffices to show that $J$ is permutation-invariant. Fix any $\eta\in S_n.$ Then, we get
\begin{align*}
   &J(B^\eta,b^\eta,w^\eta) \\
   &=
   \frac{\sum_{i=1}^n w_{\eta(i)}b_{\eta(i)}\left(w_{\eta(i)}, \frac{\sum_{j\sim_{B^\eta} i}w_{\eta(j)}}{d_{i}(B^\eta)}\right)}{\sum_{i=1}^n w_{\eta(i)}} - \frac{\sum_{i=1}^n \left(1-w_{\eta(i)}\right)b_{\eta(i)}\left(w_{\eta(i)}, \frac{\sum_{j\sim_{B^\eta} i}w_{\eta(j)}}{d_{i}(B^\eta)}\right)}{\sum_{i=1}^n (1-w_{\eta(i)})}.
\end{align*}
Since $\sum_{i=1}^n w_{\eta(i)} = \sum_{i=1}^n w_i,$ it remains to deal with the numerators. Note that
\begin{align*}
    \sum_{i=1}^n w_{\eta(i)}b_{\eta(i)}\left(w_{\eta(i)}, \frac{\sum_{j\sim_{B^\eta} i}w_{\eta(j)}}{d_{i}(B^\eta)}\right) &= 
    \sum_{i=1}^n w_{\eta(i)}b_{\eta(i)}\left(w_{\eta(i)}, \frac{\sum_{j: \eta(j)\sim_{B} \eta(i)}w_{\eta(j)}}{d_{\eta(i)}(B)}\right)\\
    &=\sum_{i=1}^n w_ib_{i}\left(w_{i}, \frac{\sum_{j: \eta(j)\sim_{B} i}w_{\eta(j)}}{d_{i}(B)}\right)\\
    &=\sum_{i=1}^n w_ib_{i}\left(w_{i}, \frac{\sum_{j \sim_{B} i}w_{j}}{d_i(B)}\right).
\end{align*}
By the same argument for the other numerator, we conclude that $J(B^\eta,b^\eta,w^\eta) = J(B,b,w)$ as desired and conclude the proof.
\end{proof}

\section{Proof of Theorem \ref{Main} and Corollary \ref{clarify}}
We are now ready to prove Theorem \ref{Main} and Corollary \ref{clarify}.
\begin{proof}[Proof of Theorem \ref{Main}]
Since $(G_n,v_n,\varphi_n)$ is deterministic while all randomness in $(L_n,\ell_n,\sigma_n)$ is independent of the treatments, Lemma \ref{permutation} is applicable to both finite populations. Thus, Proposition \ref{wager_main} applies to the relabeled random populations $(L_n^\sigma,\ell_n^\sigma)$, and combining this with Lemma \ref{coupling_lemma} and Slutsky's theorem gives
     \begin{align*}
    \sqrt{n}\left(\hat\tau_n^{\HT}(G_n^{\varphi}, v_n^\varphi) - \overline\tau_n(G_n^{\varphi}, v_n^\varphi)\right)\limd N(0,V^{\HT}_{\L,\ell,\pi}).
    \end{align*}
    A final application of Lemma \ref{permutation} removes the relabeling and gives the claimed result for $(G_n,v_n).$ This concludes the proof of Theorem \ref{Main} for the HT estimator. 
    
    Similarly, Theorem 4 of \cite{Wager} gives
    \begin{align*}
    \sqrt{n}(\hat\tau_n^{\Haj}(L_n,\ell_n) - \overline\tau_n(L_n,\ell_n))\limd N\left(0,V^{\Haj}_{\L,\ell,\pi}\right).
    \end{align*}
Applying Lemma \ref{permutation} to the above superpopulation CLT, then using Lemma \ref{coupling_lemma_Haj} along with Slutsky's theorem, we obtain
 \begin{align*}
\sqrt{n}\left(\hat\tau_n^{\Haj}(G_n^{\varphi}, v_n^\varphi) - \overline\tau_n(G_n^{\varphi}, v_n^\varphi)\right)\limd N(0,V^{\Haj}_{\L,\ell,\pi}).
\end{align*}
Finally, Lemma \ref{permutation} removes the relabeling and gives the claimed result for $(G_n,v_n).$ This concludes the proof.
\end{proof}

\begin{proof}[Proof of Corollary \ref{clarify}]
    As mentioned in the main text, it suffices to show that $\{L_n\}_{n=1}^\infty$ almost surely satisfies Condition \ref{Deg}. This is given in Lemma \ref{concentration_deg} and we conclude the proof.
\end{proof}

\section{Simulation Details}
All code for our experiments is available at \url{https://github.com/bryan1292929/Network-interference}. We describe how to compute the limiting variances when 
\begin{align*}
    \L(u,v)&=1_{u\neq v}\cdot 1_{u+v>1},\\
    \ell(t,w,x)&= t + (1+4t)w + (2+2t)x + 5x^2 + 4wx,
\end{align*} 
and $\pi=0.5.$ Recall that 
\begin{align*}
        \mathcal{R}_i &= \frac{\ell(U_i,1,\pi)}{\pi} + \frac{\ell(U_i,0,\pi)}{1-\pi},\qquad
        \mathcal{Q}_i = \E\left[\left.\frac{\mathcal{L}(U_i,U_j)\left(\ell'(U_j,1,\pi) - \ell'(U_j,0,\pi)\right)}{\int_{[0,1]}\L(u,U_j)\,du}\right| U_i\right].
    \end{align*}
In our case, we have $\ell'(t,1,\pi)-\ell'(t,0,\pi)=4.$ Moreover, we know that $\int_0^1 \L(u,v)\,du=v.$ Hence, we get
\begin{align*}
    \mathcal{Q}_i = 4\cdot \E\left[\frac{\L(U_i,U_j)}{U_j}\mid U_i\right] &= 4\int_{0}^1 \frac{\L(U_i,x)}{x}\,dx \\
    &= 4\cdot \int_{1-U_i}^1\frac{1}{x}\,dx \\
    &= -4\ln(1-U_i).
\end{align*}
As the asymptotic variances are given by
\begin{align*}
        V^{\HT}_{\L,\ell,\pi} &= \pi(1-\pi)\E[(\mathcal{R}_i + \mathcal{Q}_i)^2],\qquad
        V^{\Haj}_{\L,\ell,\pi} = \pi(1-\pi)\left(\Var(\mathcal{R}_i+\mathcal{Q}_i) + \left(\E[\mathcal{Q}_i]\right)^2\right),
    \end{align*} 
    we can use Monte Carlo sampling to estimate $V^{\HT}_{\L,\ell,\pi},V^{\Haj}_{\L,\ell,\pi}$ by sampling many $U_i$'s.
\end{document}